\numberwithin{equation}{section}
\tikzstyle{vecArrow} = [thick, decoration={markings,mark=at position
\tikzstyle{innerWhite} = [semithick, white,line width=1.4pt, shorten >= 4.5pt]
\theoremstyle{plain}
\newtheorem{theorem}{Theorem}
\newtheorem*{theorem*}{Theorem}
\numberwithin{theorem}{section}
\newtheorem{lemma}[theorem]{Lemma}
\newtheorem{proposition}[theorem]{Proposition}
\newtheorem{cor}[theorem]{Corollary}
\newtheorem{definition}{Definition}
\theoremstyle{remark}
\newtheorem*{remark}{Remark}
\newtheorem*{fom}{Fomin's identity}
\newtheorem*{aff}{An affine version of Fomin's identity}
\newcommand{\barr}{\begin{eqnarray}}
\newcommand{\earr}{\end{eqnarray}}
\newcommand{\be}{\begin{equation}}
\newcommand{\ee}{\end{equation}}
\begin{document}

\title[]
{Loop-erased walks and random matrices}

\author[]{Jonas Arista}
\address{\newline School of Mathematics and Statistics, University College Dublin, Dublin 4, Ireland}

\author[]{Neil O'Connell}

\date{\today}
\begin{abstract}
It is well known that there are close connections between 
non-intersecting processes in one dimension
and random matrices, based on the reflection principle.  There is a generalisation
of the reflection principle for more general (e.g. planar) processes, due to S. Fomin, in which the 
non-intersection condition is replaced by a condition involving loop-erased paths.  
In the context of independent Brownian motions in suitable planar domains, this 
also has close connections to random matrices.  An example of this was first observed
by Sato and Katori (Phys. Rev. E, 83, 2011).  We present further examples
which give rise to various Cauchy-type ensembles.  We also extend Fomin's identity to
the affine setting and show that in this case, by considering independent Brownian 
motions in an annulus, one obtains a novel interpretation of the circular orthogonal ensemble.
\end{abstract}

\keywords{}

\subjclass[2010]{05A19, 60J45, 60B20 (Primary); 60J65, 60J67 (Secondary)}

\maketitle

%\tableofcontents

\section{Introduction}
\label{sec:intro}

It is well known that there are close connections between 
non-intersecting processes in one dimension
and random matrices, based on the reflection principle. 
There is a generalisation of the reflection principle for more general processes, 
due to S. Fomin~\cite{Fomin}, in which the non-intersection condition is replaced 
by one involving loop-erased paths.  
In the context of independent Brownian motions in suitable planar domains, this 
also has close connections to random matrices, specifically Cauchy-type ensembles.
An example of this was first observed by Sato and Katori \cite{Katori}.  
We will present further examples, in particular, based on some domains
which were discussed in Fomin's original paper.
We will also consider the circular setting, with periodic boundary conditions,
for this we extend Fomin's identity to the affine setting; we show that in this case, 
by considering independent Brownian motions in an annulus, we obtain a 
novel interpretation of the Circular Orthogonal Ensemble of random matrix theory.

\subsection{Determinant formulas for loop-erased walks and affine generalisations}
\label{dflew}
Determinant formulas for the total weight of one-dimensional non-intersecting processes have many variations, both in continuous and discrete settings.
They are also known as the Karlin-McGregor formula for Markov processes \cite{Karlin, Werner, Grabiner1,Neil1}, or the Lindstr\"om-Gessel-Viennot lemma in enumerative combinatorics
\cite{Lindstrom,Gessel-Viennot,Stembridge,Gessel}. Roughly speaking, the argument behind all these determinant formulas is the classical reflection principle, which allows the construction of a particular one-to-one `path-switching' map from a set of {\it intersecting} paths onto itself, such that the map is its own inverse 
(see Section \ref{rp}).

For {\it two-dimensional} state space processes, it is not clear how to perform the classical reflection principle, since the paths under consideration are allowed to have self-intersections (or  loops). However, there is a generalisation of the reflection principle for more general (e.g. planar) paths, due to S. Fomin \cite{Fomin}, in which the non-intersecting condition is replaced by one involving {\it loop-erased paths}. Then it is possible to obtain a determinant formula (Theorem \ref{Fomingen}) for the total weight of discrete planar processes which satisfy Fomin's non-intersection condition, 
here stated in the context of Markov chains:

\begin{fom}
Consider a time-homogeneous Markov chain whose state space is a discrete subset $V$ of a simply connected domain $\Omega$. Assume that the transitions of the chain are determined by a (weighted) planar directed graph (with vertex set $V$). Multiple loops are allowed. Distinguish a subset $\partial\Gamma\subset V$ of {\it boundary vertices} and assume they all lie on the topological boundary $\partial\Omega$. Assume that vertices $a_{n},...,a_{1}\subset V$ and $b_{1},...,b_{n}\subset\partial\Gamma$ lie on the boundary $\partial\Omega$ and are ordered counterclockwise (along $\partial\Omega$), as in Figure \ref{planardomain}. Therefore, if 
$$h(a_{i},b_{j})\quad 1\leq i,j\leq n,$$ 
denotes the probability (or {\it hitting probability}) that the Markov chain, starting at $a_{i}$, will first hit the boundary $\partial\Gamma$ at vertex $b_{j}$ (if $a_{i}\in\partial\Gamma$, the chain is supposed to walk into $V\setminus\partial\Gamma$ before reaching $b_{j}$), then the $n\times n$ determinant
\begin{align}\label{Fomingeneral11}
\det(h(a_{i},b_{j}))_{i,j=1}^{n},
\end{align}
is equal to the probability that $n$ independent trajectories of the Markov chain $X_{1},...,X_{n}$, starting at $a_{1},...,a_{n}$, respectively, will first hit the boundary $\partial\Gamma$ at locations $b_{1},...,b_{n}$, respectively, and furthermore the trajectory $X_{j}$ will never intersect the loop-erasure $LE(X_{i})$ of $X_{i}$, for all $i<j$, that is, 
\begin{align}\label{repuls}
X_{j}\cap LE(X_{i})=\emptyset,\quad\text{for all}\,\,1\leq i<j\leq n.
\end{align}
\end{fom}
\vspace{0.1cm}
The above identity is the non-acyclic analogue of the determinant formula for non-intersecting one-dimensional processes of  Karlin-McGregor/Gessel-Viennot. In this respect, the following details are worth to remark: because of the nature of the underlying graph, trajectories of the Markov chain are allowed to have loops and therefore, for a given trajectory, we can properly define its {\it loop-erasure} as the self-avoiding path resulting from erasing its loops chronologically. Moreover, the determinant (\ref{Fomingeneral11}) gives the locations of the hitting points $b_{1},...,b_{n}$ along the boundary $\partial\Gamma$, and the condition on the trajectories is given by (\ref{repuls}), which forces the loop-erased paths to repel each other (see Section \ref{rplerw}). The counterclockwise arrangement of paths is just a particular case in the more general combinatorial identity given by S. Fomin in \cite{Fomin}, which can be applied to a wide range of configurations of $n$ distinct paths, depending on the location of the initial and final vertices and the topology of the planar domain $\Omega$.

Section \ref{avfi} is a first step towards the extension of the previous framework to non-simply connected domains of the complex plane. There, we state and prove an affine (circular) version of Fomin's identity (Proposition \ref{Fominaff3}), which can be seen as an extension of Fomin's identity to the setting of the affine symmetric group $\tilde{A}_{n}$. In Section \ref{circular} we relate this affine version with the Circular Orthogonal Ensemble (COE) of random matrix theory. In the context of Markov chains, our affine version of Fomin's identity can be stated as follows:

\begin{aff}
Consider a time-homogeneous Markov chain whose transitions are determined by the (directed) lattice strip $G=\mathbb{Z}\times\{0,1,...,N\}$. Assume that the transition probabilities are space-invariant with respect to a fixed horizontal translation $\mathcal{S}:G\to G$. If vertices $a_{n},...,a_{1},b_{1},...,b_{n}$ are ordered counterclockwise along the boundary (as in Figure \ref{peru}), then the $n\times n$ determinant
\begin{align}\label{Fomingeneral12}
\det\left(\sum_{k\in\mathbb{Z}}\zeta^{k}h(a_{i},\mathcal{S}^{k}b_{j})\right)_{i,j=1}^{n},
\end{align}
of hitting probabilities $h(a_{i},\mathcal{S}^{k}b_{j})$, where $\mathcal{S}^{k}=\mathcal{S}\circ\mathcal{S}^{k-1}$, $k\in\mathbb{Z}$, and 
$$
\zeta= \left\{
        \begin{array}{ll}
            1 & \quad \text{if $n$ is odd} \\
            -1 & \quad \text{if $n$ is even},
        \end{array}
    \right.
$$
is equal to the probability that $n$ independent trajectories of the Markov chain $X_{1},...,X_{n}$, starting at $a_{1},...,a_{n}$, respectively, will first hit the upper boundary $\partial\Gamma=\mathbb{Z}\times\{N\}$ at any of the $n$ cyclic permutations of the vertices $b_{1},...,b_{n}$, shifted also by all possible horizontal translations by $\mathcal{S}^{k}$, $k\in\mathbb{Z}$, and furthermore the trajectories are constrained to satisfy
\begin{align*}
P_{j}\cap LE(P_{j-1})=\emptyset,\quad 1< j\leq n,\quad\text{and}\quad P_{1}\cap LE(\mathcal{S}P_{n})=\emptyset.
\end{align*}
\end{aff}

It is important to note that the non-intersection condition above is related to the one between trajectories in a cylindrical lattice (or annulus on the complex plane), see Figure \ref{annulusband} and the introduction of Section \ref{avfi}. In an acyclic graph, the above affine case agrees with the Gessel-Zeilberger formula for counting paths in alcoves \cite{Gessel2} (see Section \ref{generallatt}).

\subsection{Scaling limits}\label{sclim} 

Our interest in the above determinant formulas relies upon their applicability in the context of suitable scaling limits of Fomin's identity and its affine version. It is well known that the two-dimensional Brownian motion $B$ is the scaling limit of simple random walks on different planar graphs \cite{Durrett2}. Moreover, the loop-erasure of those random walks converges (in a certain sense) to a random self-avoiding continuous path in the complex plane called SLE$(2)$, which belongs to the family of Schramm-Loewner evolutions, or SLE$(k)$, $k\geq0$, for short \cite{Lawler2,Schramm,Yadin}). As we might expect from our intuition, the latter SLE$(2)$ path is, in fact, a loop-erasure $LE(B)$ of the Brownian motion $B$ in a sense which can be made precise \cite{Zhan}. The previous considerations offer the possibility of interpreting, at least informally, the scaling limit of Fomin's identity and its affine version in terms of two-dimensional Brownian motions, in suitable complex domains. For example, since the determinants (\ref{Fomingeneral11}) and (\ref{Fomingeneral12}) involve hitting probabilities for a {\it single} Markov chain, it continues to make sense when $h(a,b)$ is the Poisson kernel (or hitting density) of two-dimensional Brownian motion in suitable simply connected domains $\Omega$ with smooth boundaries. One might expect that determinants of hitting densities are the scaling limits of the corresponding determinants of hitting probabilities for simple random walks, in square grid approximations of $\Omega$ and, moreover, that the former determinants express non-crossing probabilities between Brownian paths and SLE$(2)$ paths. This scaling limit has been rigorously achieved in the case of $n=2$ paths \cite{KL, KL3, KL2}, while ongoing works related to the general case $n>2$ are linked to the theory of (local and global) multiple SLE \cite{Dub1, KL, KL2, Karrila, Karrila2}.

\begin{figure}[tb]
%\blankbox{.6\columnwidth}{5pc}
\includegraphics[scale=0.9]{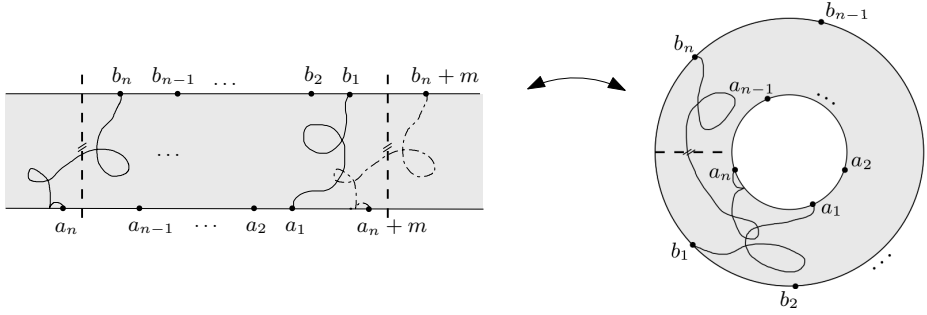}
\caption{Affine setting.}
\label{annulusband}
\end{figure}

Our contribution in the previous context is the connection with random matrix theory that emerges from the following setting: assume that $\Omega$ is a suitable complex (connected) domain with {\it smooth} boundary and $h(z_{0},y)$ is the (hitting) density of the harmonic measure
$$\mu_{z_{0},\Omega}(A)=\mathbb{P}^{z_{0}}(B_{T}\in A),\quad A\subset\partial\Omega,$$
with respect to one-dimensional Lebesgue measure (lenght), where $B$ under $\mathbb{P}^{z_{0}}$ denotes a two-dimensional Brownian motion starting at $z_{0}\in\Omega$, and $T=\inf\{t>0:B_{t}\notin\Omega\}$ is the first exit time of $\Omega$ (see Section \ref{abrociobm}). More generally, $h(z_{0},y)$ can be the hitting density of a diffusion in a suitable complex domain, with absorbing and normal reflecting boundary conditions (this idea is originally discussed in \cite{Fomin}). Therefore, for $m\in\mathbb{R}$ and appropriately chosen (parametrized) positions $x_{1},...,x_{n}$ and $y_{1},...,y_{n}$ along the boundary $\partial\Omega$, the {\it determinants of hitting densities}
\begin{align}\label{Poissonkernel4intro}
H(x,y)=\det\left(h(x_{i},y_{j})\right)_{i,j=1}^{n}dy_{1}\cdots dy_{n},
\end{align}
and
\begin{align}\label{Poissonkernel24intro}
H(x,y)=\det\left(\sum_{k\in\mathbb{Z}}\zeta^{k}h(x_{i},y_{j}+mk)\right)_{i,j=1}^{n}dy_{1}\cdots dy_{n},
\end{align}
where  
$$
\zeta= \left\{
        \begin{array}{ll}
            1 & \quad \text{if $n$ is odd} \\
            -1 & \quad \text{if $n$ is even},
        \end{array}
    \right.
$$
can be interpreted, informally, as the probability that $n$ independent `Brownian motions' $B_{i}$, $i=1,...,n$, starting at positions $x_{1},x_{2},...,x_{n}$, respectively, will first hit an absorbing boundary $\partial\Gamma\subset\partial\Omega$ at (parametrized) positions in the intervals $(y_{i}+dy_{i})$, $i=1,...,n$, and whose trajectories are constrained to satisfy the condition 
$$
B_{j}\cap LE(B_{i})=\emptyset,\quad\text{for all}\,\,1\leq i<j\leq n,
$$ 
in (\ref{Poissonkernel4intro}), or
\begin{align*}
B_{j}\cap LE(B_{j-1})=\emptyset,\quad 1< j\leq n,\quad\text{and}\quad B_{1}\cap LE(m+B_{n})=\emptyset,
\end{align*}
in the affine case (\ref{Poissonkernel24intro}). We remark that in the affine case, we assume $\Omega$ to be invariant under a fixed (horizontal) translation by $m\in\mathbb{R}$, and therefore $m+B_{n}$ is the horizontal translation by $m$ of the Brownian path $B_{n}$, see Figure \ref{annulusband}. We remark that some hitting densities $h(x,y)$ can be calculated explicitly for a number of important domains, like disks and half-planes, and many others can be deduced from these by reflection and conformal invariance of the two-dimensional Brownian motion. We consider examples of determinants of hitting densities in Sections \ref{rmt} and \ref{circular}.

Finally, if $\partial\Gamma=\partial\Omega$ and then the whole boundary $\partial\Omega$ is absorbing, we require a different notion of hitting density $h(x,y)$ (since the paths need to `walk' into the interior $\Omega^{\circ}=\Omega\setminus\partial\Omega$ before reaching their destination). Therefore, in order to study determinants of the form (\ref{Poissonkernel4intro}) and (\ref{Poissonkernel24intro}), we consider the so-called {\it excursion Poisson kernel}. In this context, an example of a similar interpretation of determinants of hitting densities of the form (\ref{Poissonkernel4intro}) was first observed by Sato and Katori \cite{Katori} (see Section \ref{nnoepkd}).

\subsection{Connections to random matrix theory}\label{crmt}
Non-intersecting processes in {\it one dimension} have long been an integral
part of random matrix theory, at least since the pioneering
work of Dyson~\cite{Dyson} in the 1960s.  For example, it is well known
that, if one considers $n$ independent one-dimensional Brownian particles, 
started at the origin
and conditioned not to intersect up to a fixed time $T$ (see Section \ref{an} for details), then the locations of the
particles at time $T$ 
have the same distribution as the eigenvalues of a random real symmetric 
$n\times n$ matrix with independent centered Gaussian entries, with 
variance $T$ on the diagonal and $T/2$ above the diagonal (this is known
as the Gaussian Orthogonal Ensemble (GOE)).  Similar statements
hold for the circular ensembles, see for example~\cite{Werner} or Proposition
\ref{jac} below.

In {\it two dimensions}, we can consider appropriate limits of the form
\begin{align}\label{normalisedlimitsintro}
\lim_{\substack{(x_{1},...,\,x_{n})\in C\\x_{i}\to x\in\partial\Omega}}\tilde{H}(x,y),\quad (y_{1},...,y_{n})\in C,
\end{align}
where $\tilde{H}(x,y)$ is an appropriate normalisation of the determinants $H(x,y)$ in (\ref{Poissonkernel4intro}) and (\ref{Poissonkernel24intro}), and the positions $x_{1},...,x_{n}$, $y_{1},...,y_{n}$ are determined by chambers (alcoves) $C$ of $\mathbb{R}^{n}$. These limits give the locations of the $n$ hitting points $y_{1},...,y_{n}$ along the absorbing boundary $\partial\Gamma$, when the processes start at a single common point $x\in\partial\Omega$. In a way, this is the two-dimensional analogue of the model described in the preceding paragraph.  In Section \ref{rmt} we show that the limits (\ref{normalisedlimitsintro}) agree with eigenvalue densities of Cauchy type random matrix ensembles, for determinants of the form (\ref{Poissonkernel4intro}) (see \cite{Katori} and Section \ref{nnoepkd} for similar asymptotic considerations regarding excursion Poisson kernels). For determinants of the form (\ref{Poissonkernel24intro}), in Section \ref{circular} we show that, by considering the hitting density of the two-dimensional Brownian motion in an annulus on the complex plane, certain limit of the form \ref{normalisedlimitsintro} agrees with the Circular Orthogonal Ensemble (COE) of random matrix theory (Proposition \ref{COE}).

\subsection{Organisation of the paper}
The paper is structured into two parts that can be read (essentially) independently. The first part (Sections \ref{background} and \ref{avfi}) is mainly concerned to the combinatorial results of Section \ref{dflew}.
In Section \ref{background} we give some background
on the reflection principle and Fomin's generalisation for loop-erased walks in discrete lattice models.
In Section \ref{avfi}, we present the affine version of Fomin's identity.
The second part (Sections \ref{rmt} and \ref{circular}) shows calculations and limits for determinants of hitting densities of the form (\ref{Poissonkernel4intro}) and (\ref{Poissonkernel24intro}).
In Section \ref{rmt}, we show that for suitable simply connected domains, the determinants associated with 
Fomin's identity converge, in a certain sense, to some known ensembles of 
random matrix theory.
In Section \ref{circular} we consider the affine setting and, after revisiting the model of non-intersecting one-dimensional Brownian motion on the circle \cite{Werner}, we show that a determinant of the form (\ref{Poissonkernel24intro}), in the context of independent Brownian motions in an annulus, 
converges in a suitable limit to the Circular Orthogonal Ensemble.

\bigskip

\noindent{\em Acknowledgements.}  
We gratefully acknowledge the support of the European Research Council 
(Grant number 669306) and CONACYT (PhD scholarship number 411059). 
We would also like to thank the anonymous referees for their careful reading
and suggestions, in particular for drawing our attention to the paper~\cite{Katori},
 which have led to a much improved version of the paper.

\section{The reflection principle and Fomin's generalisation}
\label{background}

In this section, we consider the discrete versions of some of the determinant formulas considered in the Introduction. This combinatorial approach has some advantages and will be particularly convenient in Section \ref{rplerw}, where some of the main concepts are defined for discrete paths. Let $G=(V,E,\omega)$ be a directed graph with no multiple edges, countable vertex set $V$ and edge set $E\subset V\times V$. The graph $G$ need not be acyclic, so multiple loops are allowed. The set $\omega$ is a family of pairwise distinct formal indeterminates $\{\omega(e)\}_{e\in E}$ that we will call the {\it weights} of the edges. The imposed restriction on edge multiplicity is not essential, but most of the applications we have in mind share this condition.

Let us introduce the notation and terminology we will use through all the following sections. A directed edge $e$ from vertex $a\in V$ to vertex $b\in V$ will be denoted as $a\stackrel{e}{\to}b$, and a {\it path} or {\it walk} $P$ will mean a finite sequence of (directed) edges and vertices 
$$P:a_{0}\stackrel{e_{1}}{\to}a_{1}\stackrel{e_{2}}{\to}a_{2}\stackrel{e_{3}}{\to}\cdots\stackrel{e_{n}}{\to}a_{n}.$$ 
In this case, we say that $P$ is a path from $a_{0}$ to $a_{n}$ of {\it length} $n$. For any pair of vertices $a,b\in V$, we denote the set of all paths in $G$ from $a$ to $b$ by $\mathcal{H}(a,b)$, and, if ${\bf a}=(a_{1},...,a_{n})$ and ${\bf b}=(b_{1},...,b_{n})$ are two $n$-tuples of vertices, then $\mathcal{H}({\bf a},{\bf b})$ will denote the set of $n$-tuples of paths  
\begin{align*}
\mathcal{H}({\bf a},{\bf b})=\{{\bf P}=(P_{1},...,P_{n}): P_{i}\in\mathcal{H}(a_{i},b_{i}),\,\,\,\text{for}\,\,\, 1\leq i\leq n\}. 
\end{align*}
The weight $\omega(P)$ of a path $P$ is defined as the product of its edge weights
\begin{align*}
\omega(P)=\prod_{i=1}^{n}\omega(e_{i}),
\end{align*}
if $P$ is given as above. Analogously, the weight of an $n$-tuple ${\bf P}=(P_{1},...,P_{n})$ is the product of the corresponding path weights $\omega({\bf P})=\prod_{i=1}^{n}\omega(P_{i})$. A quantity of interest will be the generating function 
\begin{align*}
h(a,b)=\sum_{P\in\mathcal{H}(a,b)}\omega(P),\quad a,b\in V,
\end{align*}
which encodes all paths $P\in\mathcal{H}(a,b)$ according to their weight. This expression should be understood as a formal power series in the independent variables $\{\omega(e)\}_{e\in E}$.

Finally, two paths $P_{1}$ and $P_{2}$ in $G$ {\it intersect} if they share at least one vertex (in their vertex-sequence definitions) and we will write this as $P_{1}\cap P_{2}\not=\emptyset$. A family of paths ${\bf P}\in\mathcal{H}({\bf a},{\bf b})$ is {\it intersecting} if any two of them intersect. We will say that $P$ is {\it self-avoiding} or has {\it no loops} if it does not visit the same vertex more than once, that is, if $a_{i}\not= a_{j}$ in the vertex sequence definition of $P$, for all $0\leq i<j\leq n$.

\subsection{The classical reflection principle}
\label{rp}

\begin{figure}[tb]
%\blankbox{.6\columnwidth}{5pc}
\includegraphics[scale=0.7]{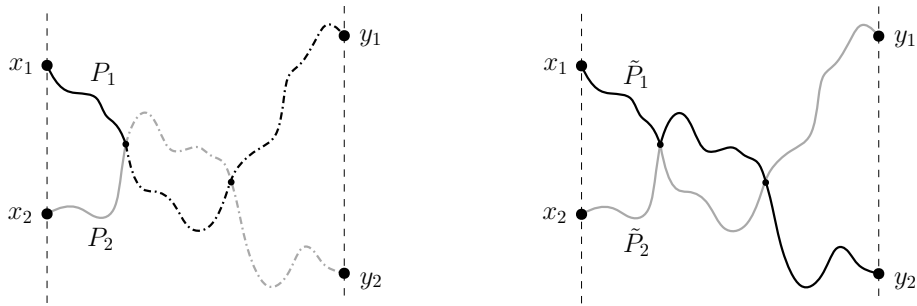}
\caption{The association $(P_1,P_2)\overset{\varphi}{\mapsto}(\tilde{P}_1,\tilde{P}_2)$ is an involution, $\varphi^{2}=id$.}
\label{rrpp}
\end{figure}

If the graph $G=(V,E,\omega)$ is acyclic (loops are not allowed), the reflection principle relies upon the following property. Consider two paths $P_{1}$ and $P_{2}$ in $G$, and assume that they intersect (see Figure \ref{rrpp}). Fix a total order for the set of vertices $V$ and let $A=\{v_{\alpha}:\alpha\in I\}$ be the set of intersection vertices between $P_{1}$ and $P_{2}$, which is finite. Among all intersection vertices, let $v_{\alpha_{0}}$ be the minimal with respect to the given order, and split the paths $P_{1}$ and $P_{2}$ at the vertex $v_{\alpha_{0}}$, into the corresponding subpaths:
\begin{align*}
P_{1}:a_{0}&\stackrel{P_{1}'}{\longrightarrow}v_{\alpha_{0}}\stackrel{P_{1}''}{\longrightarrow}a_{n}\\
P_{2}: a'_{0}&\stackrel{P_{2}'}{\longrightarrow}v_{\alpha_{0}}\stackrel{P_{2}''}{\longrightarrow}a'_{m}.
\end{align*}
Now interchange the parts $P_{1}''$ and $P_{2}''$ above. This procedure creates two new paths $\tilde{P}_{1}$ and $\tilde{P}_{2}$ given by
\begin{align*}
\tilde P_{1}:a_{0}&\stackrel{P_{1}'}{\longrightarrow}v_{\alpha_{0}}\stackrel{P_{2}''}{\longrightarrow}a'_{m}\\
\tilde P_{2}: a'_{0}&\stackrel{P_{2}'}{\longrightarrow}v_{\alpha_{0}}\stackrel{P_{1}''}{\longrightarrow}a_{n}.
\end{align*}
The paths $\tilde{P}_{1}$ and $\tilde{P}_{2}$ also intersect (in particular, $v_{\alpha_{0}}$ is an intersection vertex) and, more importantly, their set of intersection vertices is also $A=\{v_{\alpha}:\alpha\in I\}$. This means that the intersection vertices are {\it invariant} under the map $(P_{1},P_{2})\mapsto(\tilde{P}_{1},\tilde{P}_{2})$ and hence so is the minimum vertex $v_{\alpha_{0}}$.   Therefore, if we perform the same procedure to the paths $\tilde{P}_{1}$ and $\tilde{P}_{2}$, we recover the original paths $P_{1}$ and $P_{2}$. In other words, the map $(P_{1},P_{2})\mapsto(\tilde{P}_{1},\tilde{P}_{2})$ is an involution. Moreover, the
weights are also invariant under this operation: $\omega(P_{1})\omega(P_{2})=\omega(\tilde{P}_{1})\omega(\tilde{P}_{2})$.

A careful application of the above argument leads to the following enumeration formula for non-intersecting paths by Karlin and McGregor \cite{Karlin} (in the context of Markov chains) and Lindstr\"om \cite{Lindstrom} (further developed by Gessel-Viennot \cite{Gessel-Viennot}):

\begin{theorem}\label{kmlgv}
In an acyclic graph, let $\partial\Gamma\subset V$ be the distinguished  set of vertices:
$$\partial\Gamma=\{a\in V:\nexists\,a\stackrel{e}{\to}b\}.$$
For arbitrary sets $A=\{a_{1},...,a_{n}\}\subset V$ and $B=\{b_{1},...,b_{n}\}\subset\partial \Gamma$, it holds
\begin{align*}
\sum_{\sigma\in S_{n}}{\rm sgn}(\sigma)\sum_{\substack{{\bf P}\in\mathcal{H}({\bf a},{\bf b}_{\sigma})\\P_{i}\cap P_{j}=\emptyset,\,\,i\not=j}}\omega({\bf P})=\det\left(h(a_{i},b_{j})\right)_{i,j=1}^{n},
\end{align*}
where ${\bf b}_{\sigma}=(b_{\sigma(1)},...,b_{\sigma(n)})$.
\end{theorem}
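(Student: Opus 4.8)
The plan is to expand the right-hand determinant combinatorially and then cancel the contribution of all path families that are not non-intersecting by means of a sign-reversing, weight-preserving involution built from the path-switching map $\varphi$ of Section~\ref{rp}. First I would use the definition of $h$ together with distributivity to write
\begin{align*}
\det\left(h(a_{i},b_{j})\right)_{i,j=1}^{n}=\sum_{\sigma\in S_{n}}{\rm sgn}(\sigma)\prod_{i=1}^{n}h(a_{i},b_{\sigma(i)})=\sum_{\sigma\in S_{n}}{\rm sgn}(\sigma)\sum_{{\bf P}\in\mathcal{H}({\bf a},{\bf b}_{\sigma})}\omega({\bf P}),
\end{align*}
the last step being the observation that choosing an $n$-tuple of paths with prescribed endpoints amounts to choosing each path independently; in the (possibly infinite) acyclic setting this is an identity of formal power series in the indeterminates $\{\omega(e)\}_{e\in E}$, so there is no convergence issue. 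Splitting the inner sum according to whether or not $P_{i}\cap P_{j}=\emptyset$ for all $i\neq j$, the non-intersecting part is precisely the left-hand side of the asserted identity, and it remains to show that
\begin{align*}
\sum_{\sigma\in S_{n}}{\rm sgn}(\sigma)\sum_{\substack{{\bf P}\in\mathcal{H}({\bf a},{\bf b}_{\sigma})\\ P_{i}\cap P_{j}\neq\emptyset\ \text{for some }i\neq j}}\omega({\bf P})=0.
\end{align*}

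For this I would exhibit a fixed-point-free involution on the set of pairs $(\sigma,{\bf P})$, with ${\bf P}\in\mathcal{H}({\bf a},{\bf b}_{\sigma})$ containing at least one intersecting pair of paths, that reverses ${\rm sgn}(\sigma)$ and preserves $\omega({\bf P})$. Given such a pair, I make canonical choices using the fixed total order on $V$: let $i$ be the smallest index for which $P_{i}$ meets some other path (by minimality, every path meeting $P_{i}$ then has index $>i$); let $v$ be the first vertex, in the vertex-sequence of $P_{i}$, that lies on some $P_{j}$ with $j\neq i$; and let $j>i$ be the largest index with $v\in P_{j}$. Applying $\varphi$ to the pair $(P_{i},P_{j})$ at the vertex $v$ — that is, interchanging the portions of $P_{i}$ and $P_{j}$ that follow $v$ — produces a family $\tilde{\bf P}\in\mathcal{H}({\bf a},{\bf b}_{\sigma\cdot(i\,j)})$ in which $\tilde P_{i}$ and $\tilde P_{j}$ still share $v$, with the same multiset of edges and hence $\omega(\tilde{\bf P})=\omega({\bf P})$, and with ${\rm sgn}(\sigma\cdot(i\,j))=-{\rm sgn}(\sigma)$. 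The map is fixed-point-free because $\sigma\cdot(i\,j)\neq\sigma$ and the $b_{k}$ are distinct, so $\tilde{\bf P}\neq{\bf P}$.

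The main point to verify — and essentially the only delicate step — is that this map is an involution, i.e.\ that the selection data $(i,v,j)$ are reproduced when the rule is applied to $\tilde{\bf P}$; this is where acyclicity enters. Since the graph is acyclic, every path is self-avoiding, so $\tilde P_{i}$ is the head of $P_{i}$ up to $v$ concatenated with the tail of $P_{j}$ from $v$ (and symmetrically for $\tilde P_{j}$), whence $V(\tilde P_{i})\cup V(\tilde P_{j})=V(P_{i})\cup V(P_{j})$. Consequently the paths of index $<i$ still meet nobody, so the smallest intersecting index in $\tilde{\bf P}$ is again $i$; no vertex strictly preceding $v$ along $P_{i}$ can lie on $P_{j}$ (by the choice of $v$) nor on the tail of $P_{i}$ from $v$ (self-avoidance), so the first vertex of $\tilde P_{i}$ on another path is again $v$; and the paths of index $>j$ are untouched and do not contain $v$, so the largest index carrying $v$ is again $j$. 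Re-applying $\varphi$ at $v$ to $(\tilde P_{i},\tilde P_{j})$ therefore restores $(P_{i},P_{j})$, and $\sigma\cdot(i\,j)\cdot(i\,j)=\sigma$. Summing over the resulting pairs of mutually cancelling terms gives the displayed vanishing, and hence the theorem. (The hypothesis $B\subset\partial\Gamma$ is not needed for this algebraic identity; it is imposed only so that $h(a,b)$ carries the Markov-chain reading as a hitting probability used elsewhere, and to match the setup of Fomin's identity.)
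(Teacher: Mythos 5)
Your proof is correct and follows essentially the same route the paper indicates: the paper does not spell out a proof of Theorem~\ref{kmlgv} but attributes it to a careful application of the two-path switching involution of Section~\ref{rp} (citing Karlin--McGregor and Lindstr\"om--Gessel--Viennot), and your argument is precisely that cancellation scheme, with the canonical choices and the verification that acyclicity (hence self-avoidance) makes the switch an involution filled in correctly. Your closing remark that $B\subset\partial\Gamma$ is not needed for the signed identity is also accurate; it matters only for the probabilistic hitting interpretation.
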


\subsection{Loop-erased walks and Fomin's identity}
\label{rplerw}
If the graph $G=(V,E,\omega)$ is not acyclic, and the paths $P_{1}$ and $P_{2}$ intersect, then the invariance of the intersection vertices described in the previous section is no longer guaranteed, since an intersection vertex can be part of a loop.  
However, there is a modification of the reflection principle for general graphs, 
due to Fomin \cite{Fomin}, which we describe below.

We briefly present the key concept of {\it loop-erased walks} introduced by G. Lawler ~\cite{Lawler}.  
\begin{definition}\label{dlep}
For each path $P$ in $G=(V,E,\omega)$ of the form 
$$a_{0}\stackrel{e_{1}}{\to}a_{1}\stackrel{e_{2}}{\to}a_{2}\stackrel{e_{3}}{\to}\cdots\stackrel{e_{n}}{\to}a_{n},$$ 
the {\it loop-erasure} of $P$, denoted $LE(P)$, is the self-avoiding path obtained by chronological loop-erasure of $P$, as follows:
\begin{itemize}
	\item Let $j_{0}=\max\{j:a_{j}=a_{0}\}$;
	\item recursively, if $j_{k}<n$, then $j_{k+1}=\max\{j:a_{j}=a_{j_{k}+1}\}$;
	\item if $j_{k}=n$, then $LE(P)$ is the path $$a_{j_{0}}\stackrel{e_{j_{0}+1}}{\longrightarrow}a_{j_{1}}\stackrel{e_{j_{1}+1}}{\longrightarrow}a_{j_{2}}\stackrel{e_{j_{2}+1}}{\longrightarrow}\cdots\stackrel{e_{j_{k-1}+1}}{\longrightarrow}a_{j_{k}}.$$
\end{itemize}
\end{definition}
This procedure erases loops in $P$ in the order they appear, and the operation is iterated until no loop remains. In particular, note that $LE(P)$ is a subpath of the original path $P$, with the same starting and end points $a_{0}$ and $a_{n}$, respectively.

Using the above procedure, Fomin~\cite{Fomin} introduced the so-called {\it loop-erased switching} for paths that are allowed to self-intersect. The loop-erased switching is as follows: consider two paths $P_{1}:a_{0}=x_{1}\stackrel{}{\to}...\stackrel{}{\to}a_{n}=y_{2}$ and $P_{2}:a'_{0}=x_{2}\stackrel{}{\to}...\stackrel{}{\to}a'_{m}=y_{1}$ in the graph $G$, starting from different vertices $a_{0}\not=a'_{0}$, and assume $P_{2}$ and $LE(P_{1})$ intersect at least at one common vertex, that is, $P_{2}\cap LE(P_{1})\not=\emptyset$ (see Figure \ref{loopswitching}). Among all such intersection vertices, let $v=a_{j_{i}}$ be the one with minimal index along the vertex sequence of $LE(P_{1})$, (see Definition \ref{dlep}), and split the path $P_{1}$ at the end of the edge $a_{j_{i-1}}\stackrel{e_{j_{i-1}}}{\longrightarrow}v$ into two subpaths:

\begin{figure}[tb]
%\blankbox{.6\columnwidth}{5pc}
\includegraphics[scale=0.7]{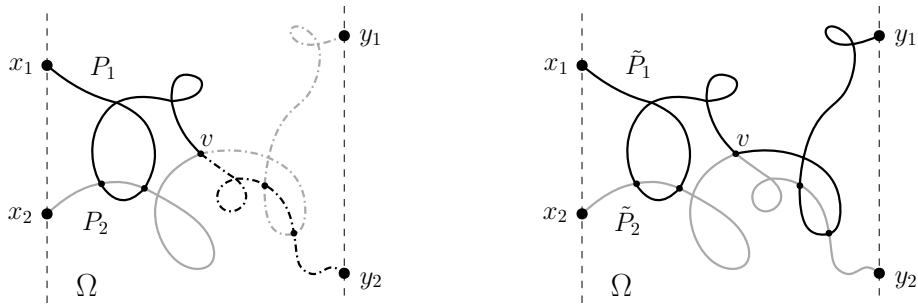}
\caption{The loop-erased switching. The path $P_{2}$ intersects the loop-erased part of $P_{1}$ and $v$ is the `first intersection' (left). Interchanging the paths at $v$, the new paths $\tilde{P}_{1}$ (black) and $\tilde{P}_{2}$ (gray) satisfy the same property, that is, $\tilde{P}_{2}$ intersects the loop-erased part of $\tilde{P}_{1}$ and $v$ is the `first intersection' (right).}
\label{loopswitching}
\end{figure}

\begin{align*}
P_{1}:\,a_{0}\stackrel{P_{1}'(v)}{\longrightarrow}v\stackrel{P_{1}''(v)}{\longrightarrow}a_{n}.
\end{align*}
This partition ensures that all possible loops of $P_{1}$ `rooted' at $v$ are part of $P_{1}''(v)$, so that $P_{1}''(v)$ does not intersect the path $LE(P_{1}'(v))$ at any vertex different from $v$. Now, if we split the path $P_{2}$ at its first visit to $v$, we have
\begin{align*}
P_{2}:\,a'_{0}\stackrel{P_{2}'(v)}{\longrightarrow}v\stackrel{P_{2}''(v)}{\longrightarrow}a'_{m}.
\end{align*} 
Then, by construction of $v$, $P_{2}''(v)$ does not visit any other vertex of $LE(P_{1}'(v))$, except for $v$, so it shares the same property as $P_{1}''(v)$. The latter common condition allows us to interchange the parts $P_{1}''(v)$ and $P_{2}''(v)$ at the vertex $v$, and create new paths 
\begin{align*}
\tilde P_{1}:a_{0}&\stackrel{P_{1}'(v)}{\longrightarrow}v\stackrel{P_{2}''(v)}{\longrightarrow}a'_{m}\\
\tilde P_{2}: a'_{0}&\stackrel{P_{2}'(v)}{\longrightarrow}v\stackrel{P_{1}''(v)}{\longrightarrow}a_{n}.
\end{align*}
Note that the new paths $\tilde{P}_{2}$ and $LE(\tilde{P}_{1})$ also intersect ($v$ is an intersection vertex), and therefore $\tilde P_{2}\cap LE(\tilde P_{1})\not=\emptyset$. These conditions ensure that the map $(P_{1},P_{2})\mapsto(\tilde P_{1},\tilde P_{2})$ is an involution, and the `minimality' of the intersection vertex $v$ is preserved, exactly as in Section \ref{rp}. We also have $\omega(P_{1})\omega(P_{2})=\omega(\tilde{P}_{1})\omega(\tilde{P}_{2})$.

The following theorem (Theorem 7.1 in \cite{Fomin}) is an application of the above loop-erased switching procedure. Fix a distinguished subset of vertices $\partial\Gamma\subset V$  and call it the {\it absorbing boundary}. For $a\in V$ and $b\in\partial\Gamma$, denote by $\mathcal{H}^{+}(a,b)\subset\mathcal{H}(a,b)$ the set of all paths of positive length
\begin{align*}
a\stackrel{e_{1}}{\to}a_{1}\stackrel{e_{2}}{\to}a_{2}\stackrel{e_{3}}{\to}\cdots\stackrel{e_{n}}{\to}b,
\end{align*}
such that all the internal vertices $a_{1},...,a_{n-1}$ lie in $V\setminus\partial\Gamma$. If $a\in\partial\Gamma$, we assume $n\geq2$, so that the path walks into $V\setminus\partial\Gamma$ before reaching the vertex $b$. Analogously, define $\mathcal{H}^{+}({\bf a},{\bf b})$ for $n$-tuples of paths ${\bf P}=(P_{1},...,P_{n})$ as at the beginning of Section \ref{background}, that is
\begin{align*}
\mathcal{H}^{+}({\bf a},{\bf b})=\{{\bf P}=(P_{1},...,P_{n}): P_{i}\in\mathcal{H}^{+}(a_{i},b_{i}),\,\,\,\text{for}\,\,\, 1\leq i\leq n\}. 
\end{align*}

\begin{theorem}[Fomin's identity]\label{Fomingen}
Let $G=(V,E,\omega)$ be a graph satisfying the above assumptions and $\partial\Gamma\subset V$. Let $A=\{a_{1},...,a_{n}\}\subset V$ and $B=\{b_{1},...,b_{n}\}\subset\partial\Gamma$ be two labelled sets of different vertices. Therefore
\begin{align}\label{Fomingeneral}
\sum_{\sigma\in S_{n}}{\rm sgn}(\sigma)\sum_{\substack{{\bf P}\in\mathcal{H}^{+}({\bf a},{\bf b}_{\sigma})\\P_{j}\cap LE(P_{i})=\emptyset,\,\,i<j}}\omega({\bf P})=\det(h(a_{i},b_{j}))_{i,j=1}^{n},
\end{align}
where ${\bf b}_{\sigma}=(b_{\sigma(1)},...,b_{\sigma(n)})$ and 
$$h(a,b)=\sum_{P\in\mathcal{H}^{+}(a,b)}\omega(P),\quad a\in V, b\in\partial\Gamma.$$
\end{theorem}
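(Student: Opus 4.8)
The plan is to deduce the identity from a sign-reversing involution, in the spirit of the Lindström–Gessel–Viennot proof of Theorem \ref{kmlgv}, with the classical path-switching replaced by Fomin's loop-erased switching from Section \ref{rplerw}. Expanding the right-hand side and using $h(a,b)=\sum_{P\in\mathcal{H}^{+}(a,b)}\omega(P)$, one has, as an identity of formal power series in the independent indeterminates $\{\omega(e)\}_{e\in E}$,
\begin{align*}
\det(h(a_{i},b_{j}))_{i,j=1}^{n}=\sum_{\sigma\in S_{n}}{\rm sgn}(\sigma)\sum_{{\bf P}\in\mathcal{H}^{+}({\bf a},{\bf b}_{\sigma})}\omega({\bf P}).
\end{align*}
Call a pair $(\sigma,{\bf P})$ with ${\bf P}\in\mathcal{H}^{+}({\bf a},{\bf b}_{\sigma})$ \emph{good} if $P_{j}\cap LE(P_{i})=\emptyset$ for all $i<j$, and \emph{bad} otherwise. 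The good pairs contribute exactly the left-hand side of (\ref{Fomingeneral}), so it suffices to prove that $\sum_{(\sigma,{\bf P})\ \mathrm{bad}}{\rm sgn}(\sigma)\,\omega({\bf P})=0$.

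For this I would construct a fixed-point-free, weight-preserving, sign-reversing involution $\Phi$ on the set of bad pairs. Given a bad $(\sigma,{\bf P})$, let $i$ be the smallest index for which $P_{j}\cap LE(P_{i})\neq\emptyset$ for some $j>i$; let $v$ be the first vertex, read along $LE(P_{i})$ from $a_{i}$, that lies in $\bigcup_{k>i}P_{k}$; and let $j$ be the smallest index $>i$ with $v\in P_{j}$. Then $v$ is the first vertex of $LE(P_{i})$ on $P_{j}$, so the loop-erased switching of Section \ref{rplerw} applies to the ordered pair $(P_{i},P_{j})$ at $v$: it fixes all other paths and replaces $(P_{i},P_{j})$ by $(\tilde P_{i},\tilde P_{j})$, with $\tilde P_{i}$ now ending at $b_{\sigma(j)}$ and $\tilde P_{j}$ at $b_{\sigma(i)}$. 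Setting $\sigma'=\sigma\circ(i\,j)$, we let $\Phi(\sigma,{\bf P})=(\sigma',\tilde{\bf P})$; then ${\rm sgn}(\sigma')=-{\rm sgn}(\sigma)$, while $\omega(\tilde{\bf P})=\omega({\bf P})$. Since no $P_{k}$ with $k>i$ visits $b_{\sigma(i)}$ or $b_{\sigma(j)}$, the vertex $v$ is not in $\partial\Gamma$; from this one checks that $\tilde{\bf P}\in\mathcal{H}^{+}({\bf a},{\bf b}_{\sigma'})$ and that $(\sigma',\tilde{\bf P})$ is again bad (indeed $v\in\tilde P_{j}\cap LE(\tilde P_{i})$).

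The crux is to verify that $\Phi$ is an involution: one must check that the selection rule applied to $(\sigma',\tilde{\bf P})$ again returns the same $i$, $v$ and $j$, after which the fact (recalled in Section \ref{rplerw}) that the loop-erased switching is its own inverse gives $\Phi^{2}=\mathrm{id}$. Two structural observations do the work. First, writing $V(P)$ for the vertex set of a path $P$, one has $V(\tilde P_{i})\cup V(\tilde P_{j})\subseteq V(P_{i})\cup V(P_{j})$, since $\tilde P_{i}$ and $\tilde P_{j}$ are concatenations of subpaths of $P_{i}$ and $P_{j}$; because the minimality of $i$ forces $P_{i}$ and $P_{j}$ to meet no $LE(P_{k})$ with $k<i$, the same then holds for $\tilde P_{i}$ and $\tilde P_{j}$, so $i$ is still minimal for $(\sigma',\tilde{\bf P})$. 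Second, by the location of the split point, $LE(\tilde P_{i})$ agrees with $LE(P_{i})$ up to and including $v$, and thereafter follows the loop-erasure of the $P_{j}$-tail from $v$; since the vertices of $LE(P_{i})$ strictly before $v$ lie on no $P_{k}$ with $k>i$ — and, by the definition of $v$ together with the location of the split, on no $\tilde P_{k}$ with $k>i$ either — the vertex $v$ is again the first vertex of $LE(\tilde P_{i})$ lying in $\bigcup_{k>i}\tilde P_{k}$, and since $\tilde P_{k}=P_{k}$ for $i<k<j$ the index $j$ is unchanged as well. I expect this second observation — controlling how loop-erasure interacts with concatenation, in particular ruling out interference from the loop vertices of $P_{i}$ that the split ``reinstates'' — to be the only genuinely delicate step; the remainder is routine cancellation bookkeeping, made clean by the restriction to $\mathcal{H}^{+}$ (so that boundary vertices occur only as endpoints) and reducing, in the acyclic case, to Theorem \ref{kmlgv}.
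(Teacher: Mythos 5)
Your proposal is correct and takes essentially the same route as the paper (and Fomin's original argument): a weight-preserving, sign-reversing involution built from the loop-erased switching of Section \ref{rplerw}, with the minimal-$i$, first-vertex-$v$ along $LE(P_i)$, minimal-$j$ selection rule — exactly the strategy the paper carries out in detail for the affine version (Theorem \ref{Fominaff}), and your two structural observations are the right (and correctly justified) points needed to see that the selection data $(i,v,j)$ is preserved, so $\Phi^2=\mathrm{id}$. One cosmetic slip: $P_j$ does visit $b_{\sigma(j)}$ (its terminal vertex), so to see $v\notin\partial\Gamma$ argue instead that $v$ lies on both $P_i$ and $P_j$, whose internal vertices avoid $\partial\Gamma$, and $v$ cannot equal $b_{\sigma(i)}$, $b_{\sigma(j)}$ or $a_i$ (the $a$'s and $b$'s being distinct), while if $v=a_j\in\partial\Gamma$ it would be an internal vertex of $P_i$ in $\partial\Gamma$, a contradiction.
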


\begin{remark}
Note that the above theorem agrees with Theorem \ref{kmlgv} if the graph under consideration is acyclic. Also, note that Theorem \ref{Fomingen} does {\it not} give the total weight of families of non-intersecting paths in $G$ connecting $A$ and $B$ (in the strict sense of non-intersection). However, the paths are constrained to satisfy
\begin{align*}
P_{j}\cap LE(P_{i})=\emptyset,\quad\text{for all}\,\,\,i<j,
\end{align*} 
which forces the corresponding loop-erased parts to repeal each other.
\end{remark}

\begin{cor}\label{Fomin}
Assume that $G$ is planar and it is also embedded into a connected planar domain $\Omega$ in such a way that the vertices in the absorbing boundary $\partial\Gamma$ lie on the topological boundary $\partial\Omega$. Let $A\subset V$ and $B\subset\partial\Gamma$ be as in Theorem \ref{Fomingen}, and, whenever $i>i'$ and $j<j'$, assume that every path $P\in\mathcal{H}^{+}(a_{i},b_{j})$ intersects every path $P'\in\mathcal{H}^{+}(a_{i'},b_{j'})$ at a vertex in $V\setminus\partial\Gamma$ (see Figure \ref{planardomain}). In this case, the only allowable permutation in (\ref{Fomingeneral}) is the identity permutation, and therefore 
\begin{align}\label{Fomin1}
\sum_{\substack{{\bf P}\in\mathcal{H}^{+}({\bf a},{\bf b})\\P_{j}\cap LE(P_{j-1})=\emptyset,\,\,1<j\leq n}}\omega({\bf P})=\det(h(a_{i},b_{j}))_{i,j=1}^{n}.
\end{align}
In particular, if the weight function $\omega$ is non-negative, then the right hand side of (\ref{Fomin1}) is non-negative.
\end{cor}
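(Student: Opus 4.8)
The plan is to deduce this directly from Fomin's identity (Theorem~\ref{Fomingen}): on the left-hand side of (\ref{Fomingeneral}) I will (i) show that every permutation $\sigma\ne\mathrm{id}$ contributes an empty sum, and (ii) show that for $\sigma=\mathrm{id}$ the constraints "$P_j\cap LE(P_i)=\emptyset$ for all $i<j$" may be relaxed to the consecutive ones "$P_j\cap LE(P_{j-1})=\emptyset$ for $1<j\le n$" without changing the weighted sum. Since $\mathrm{sgn}(\mathrm{id})=1$ and ${\bf b}_{\mathrm{id}}={\bf b}$, combining (i) and (ii) yields (\ref{Fomin1}).

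For (i), fix $\sigma\ne\mathrm{id}$ and a tuple ${\bf P}=(P_1,\dots,P_n)\in\mathcal{H}^{+}({\bf a},{\bf b}_\sigma)$ with $P_j\cap LE(P_i)=\emptyset$ whenever $i<j$, and derive a contradiction. As $\sigma$ is not the identity it has an inversion, i.e.\ indices $i'<i$ with $\sigma(i')>\sigma(i)$. The key remark is that $LE(P_{i'})\in\mathcal{H}^{+}(a_{i'},b_{\sigma(i')})$: it is a subpath of $P_{i'}$ with the same endpoints, its interior vertices are interior vertices of $P_{i'}$ and so lie in $V\setminus\partial\Gamma$, and — since the endpoint $b_{\sigma(i')}\in\partial\Gamma$ is distinct from every interior vertex — it still has the length required in the definition of $\mathcal{H}^{+}$. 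Applying the crossing hypothesis of the corollary to $P_i\in\mathcal{H}^{+}(a_i,b_{\sigma(i)})$ and to $LE(P_{i'})\in\mathcal{H}^{+}(a_{i'},b_{\sigma(i')})$ (legitimate because $i>i'$ and $\sigma(i)<\sigma(i')$) forces them to share a vertex of $V\setminus\partial\Gamma$, contradicting $P_i\cap LE(P_{i'})=\emptyset$. Hence the inner sum in (\ref{Fomingeneral}) is empty for every $\sigma\ne\mathrm{id}$.

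For (ii) — where planarity is essential — I claim that if ${\bf P}\in\mathcal{H}^{+}({\bf a},{\bf b})$ is embedded as in the hypothesis and $P_j\cap LE(P_{j-1})=\emptyset$ for all $1<j\le n$, then automatically $P_j\cap LE(P_i)=\emptyset$ for all $i<j$; since $\omega({\bf P})$ is a monomial in the formal edge weights and never vanishes, the two weighted sums then coincide. The proof is an induction using the Jordan curve theorem in $\overline{\Omega}$: each $LE(P_i)$ is a simple crosscut from $a_i$ to $b_i$ with endpoints on $\partial\Omega$, hence separates $\Omega$ into two components, say $\Omega_i^{+}$ (whose closure contains the higher marked points $a_{i+1},\dots,a_n,b_{i+1},\dots,b_n$) and $\Omega_i^{-}$. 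Because $P_{i+1}$ starts at $a_{i+1}\in\overline{\Omega_i^{+}}\setminus\overline{\Omega_i^{-}}$ and does not meet $LE(P_i)$, it lies in $\overline{\Omega_i^{+}}$, and so does $LE(P_{i+1})$; locating the higher marked points relative to $LE(P_{i+1})$ then gives the nesting $\overline{\Omega_{i+1}^{+}}\subseteq\overline{\Omega_i^{+}}$ together with $\overline{\Omega_{i+1}^{+}}\cap LE(P_i)=\emptyset$ (its two boundary pieces, $LE(P_{i+1})$ and the arc of $\partial\Omega$ through the higher marked points, are each disjoint from $LE(P_i)$). Iterating, $\overline{\Omega_{j-1}^{+}}$ is disjoint from $LE(P_i)$ for every $i\le j-2$, while $P_j\subseteq\overline{\Omega_{j-1}^{+}}$ by the same reasoning; with the given case $i=j-1$ this proves the claim. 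Finally, when $\omega\ge 0$ the left-hand side of (\ref{Fomin1}) is a sum of products of nonnegative numbers, so the right-hand side is nonnegative.

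The main obstacle is step (ii): step (i) is essentially immediate once one notices $LE(P_{i'})\in\mathcal{H}^{+}$, whereas the collapse of all the non-intersection conditions down to the consecutive ones is a genuine topological separation argument that really uses the planar embedding and would be false for general graphs.
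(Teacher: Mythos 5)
Your step (i) is exactly the paper's (implicit) derivation: the corollary is stated without further proof beyond the remark that only the identity permutation is allowable, and the way to make that rigorous is precisely your observation that $LE(P_{i'})$ again lies in $\mathcal{H}^{+}(a_{i'},b_{\sigma(i')})$, so that for an inversion $i'<i$, $\sigma(i)<\sigma(i')$ the crossing hypothesis contradicts $P_i\cap LE(P_{i'})=\emptyset$. Where you go beyond the paper is step (ii): you correctly notice that \eqref{Fomin1} sums over the larger family defined by the consecutive conditions only, so one must show that, in this planar situation, the consecutive conditions force $P_j\cap LE(P_i)=\emptyset$ for all $i<j$; the paper identifies the two constraints silently (Remark \ref{remark1} even restates the event in the all-pairs form). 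Your separation argument is the right idea but is stated too strongly in two places. First, $LE(P_i)$ need not separate $\Omega$ into exactly two components: its interior vertices are only required to avoid $\partial\Gamma$, so it may touch the non-absorbing part of $\partial\Omega$ and create further components; moreover your crosscut picture uses that $\Omega$ is simply connected and that all of $a_n,\dots,a_1,b_1,\dots,b_n$ lie on $\partial\Omega$ in the counterclockwise order of Figure \ref{planardomain} — the intended setting, but not literally among the corollary's stated hypotheses, and step (ii) genuinely needs it. Second, the assertion that the arc of $\partial\Omega$ through the higher marked points is disjoint from $LE(P_i)$ is not justified a priori. Both issues disappear if you run the induction through the $b$-endpoints only: $LE(P_{j-1})$ is connected, joins $a_{j-1}\in\partial\Omega$ to $b_{j-1}$, and meets the absorbing arc only at $b_{j-1}$, so $b_j$ and $b_i$ (on opposite sides of $b_{j-1}$) lie in different components of $\overline{\Omega}\setminus LE(P_{j-1})$; since $P_j$ is connected, avoids $LE(P_{j-1})$ and contains $b_j$, while $LE(P_i)$ is connected, avoids $LE(P_{j-1})$ by the inductive hypothesis and contains $b_i$, they lie in different components and hence are disjoint. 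With that repair your proof is complete and, unlike the paper's one-line deduction, actually closes the gap between the all-pairs and consecutive formulations.
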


\begin{figure}[tb]
%\blankbox{.6\columnwidth}{5pc}
\includegraphics[scale=0.5]{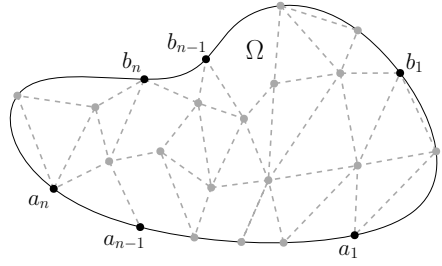}
\caption{The graph $G$ is embedded into $\Omega$ and the vertices $a_{n},...,a_{1},b_{1},...,b_{n}$ are ordered counterclockwise along $\partial\Omega$.}
\label{planardomain}
\end{figure}

\begin{remark}\label{remark1}
Assume that the vertex set $V$ is the state space of a time-homogeneous Markov chain $X$ and the possible transitions between states are determined by the planar graph $G$. That is, the transition probabilities $p(a,b)$ are positive if and only if there is an edge $a\stackrel{e}{\to}b$, in which case $\omega(e)=p(a,b)$.
Then the assertion of Corollary \ref{Fomin} has the following probabilistic interpretation: the generating function
\begin{align*}
h(a,b)=\sum_{P\in\mathcal{H}^{+}(a,b)}\omega(P),\quad a\in V, b\in\partial\Gamma,
\end{align*} 
is the {\it hitting probability} $\mathbb{P}_{a}(X_{T}=b, T<\infty)$, where $T$ is the first time the chain $X$ hits the boundary $\partial V$ (if $a\in\partial\Gamma$, the Markov chain is supposed to walk into $V\setminus\partial\Gamma$ before reaching $\partial\Gamma$). Then the left hand side of (\ref{Fomin1}) is equal to the probability that $n$ independent trajectories $X^{1},...,X^{n}$ of the Markov process $X$, starting at locations $a_{1},a_{2},...,a_{n}$, respectively, will hit the boundary $\partial\Gamma$ for the first time at the points $b_{1},b_{2},...,b_{n}$, respectively, and furthermore the trajectory $X^{j}$ will not intersect the loop-erased path $LE(X^{i})$ at any vertex in $V\setminus\partial\Gamma$, for all $i<j$, that is,
\begin{align*}
X^{j}\cap LE(X^{i})=\emptyset,\quad \text{for all}\,\,\,1\leq i<j\leq n.
\end{align*}

\end{remark}

\section{Affine version of Fomin's identity}
\label{avfi}

In this section, we extend Fomin's identity to the setting of the affine symmetric group (Theorem \ref{Fominaff}) and consider its natural projection onto the cylindrical lattice (Proposition \ref{Fominaff3}). Our main motivation is to present a preliminary extension of the framework considered in Section \ref{rmt} to non-simply connected domains, and show, in Section \ref{circular}, an interesting connection with circular ensembles of random matrix theory.

As we discussed in Section \ref{rplerw}, the interaction between $n$ paths $P_{1},P_{2},...,P_{n}$ imposed in Fomin's identity (Theorem \ref{Fomingen}) is given by the condition
\begin{align}\label{livre}
P_{j}\cap LE(P_{i})=\emptyset,\quad\text{for all}\,\,i<j.
\end{align} 
In particular, restricted to the lattice strip $G$ of Figure \ref{peru}, the above condition ensures a type of `repulsion' between consecutive paths from left to right, that is, every path $P_{j}$ will not intersect the loop-erased part $LE(P_{j-1})$ of the path to its right. Theorem \ref{Fominaff} below is an extension of Fomin's identity in the sense that we consider families of paths $P_{1},P_{2},...,P_{n}$ subject to (\ref{livre}) and also subject to an extra non-intersection condition between the path $P_{1}$ and the translation to the right of $P_{n}$, given by a fixed translation $\mathcal{S}$ of the graph $G$ (see Figure \ref{croacia}), that is
\begin{align}\label{livre2}
P_{1}\cap LE(\mathcal{S}P_{n})=\emptyset.
\end{align}
This type of interaction is helpful when the lattice strip $G$ is projected onto the {\it cylindrical lattice} $\tilde G$, modulo the translation $\mathcal{S}$ (or affine setting, see Sections \ref{projections} and \ref{lewcyl}). In this case, the conditions (\ref{livre}) and (\ref{livre2}) jointly ensure that the projected paths $\tilde{P}_{1},\tilde{P}_{2},...,\tilde{P}_{n}$, in $\tilde{G}$, also satisfy the analogous `left to right' non-intersection condition
\begin{align*}
\tilde{P}_{j}\cap LE(\tilde{P}_{j-1})=\emptyset,\quad 1< j\leq n,\quad\text{and}\quad\tilde{P}_{1}\cap LE(\tilde{P}_{n})=\emptyset.
\end{align*}

\subsection{Affine version of Fomin's identity}
\label{affinefomin}

Consider the lattice strip $G=(V,E,\omega)$, given by the vertex set $V=\mathbb{Z}\times\{0,1,...,N\}$ and connected by directed horizontal and vertical edges, in both positive and negative directions. We also assume that the weights $\{\omega(e)\}_{e\in E}$ are invariant under  horizontal translations by $v=(M,0)$, for some positive $M\in\mathbb{Z}$. Let $\partial\Gamma=\{(i,N): i\in\mathbb{Z}\}$ denote the upper boundary of the lattice strip and consider the set $\mathcal{H}^{+}({\bf a},{\bf b})$ for ${\bf a}=(a_{1},...,a_{n})$, ${\bf b}=(b_{1},...,b_{n})$ vectors of vertices, as defined in Section \ref{rplerw}. We have the following.

\begin{theorem}\label{Fominaff}
Consider integers $i_{n}<i_{n-1}<...<i_{1}<i_{n}+M$ and $j_{n}<j_{n-1}<...<j_{1}<j_{n}+M$. Define the following two $n$-tuples of vertices in $G$ 
\begin{align*}
a_{k}:=(i_{k},0),\quad\quad b_{k}:=(j_{k},N),\quad 1\leq k\leq n.
\end{align*} 
If $\mathcal{S}:G\to G$ is the horizontal translation by $(M,0)$, the $2(n+1)$ vertices $a_{n},...,a_{1},\mathcal{S}a_{n}$, $\mathcal{S}b_{n},b_{1},...,b_{n}$ are ordered counterclockwise along the topological boundary of the lattice strip $G$ (see Figure \ref{peru}). Therefore
\begin{align}\label{Fomin2}
\sum_{\substack{{\bf P}\in\mathcal{H}^{+}({\bf a},{\bf b}) \\ P_{j}\cap\text{LE}(P_{j-1})=\emptyset,\quad 1< j\leq n \\ P_{1}\cap\text{LE}(\mathcal{S}P_{n})=\emptyset}}\omega({\bf P})=\sum_{\sigma\in S_{n}}\sum_{\substack{k_{i}\in\mathbb{Z}\\k_{1}+k_{2}+...+k_{n}=0}}{\rm sgn}(\sigma)\sum_{{\bf P}\in\mathcal{H}^{+}({\bf a},{\bf \mathcal{S}^{k}b_{\sigma}})}\omega({\bf P}),
\end{align}
where ${\bf \mathcal{S}^{k}b_{\sigma}}=(\mathcal{S}^{k_{1}}b_{\sigma(1)},...,\mathcal{S}^{k_{n}}b_{\sigma(n)})$ and $\mathcal{S}^{k_{i}}=\mathcal{S}\circ\mathcal{S}^{k_{i}-1}$, $k_{i}\geq2$. If, as before, $h(a,b)=\sum_{P\in\mathcal{H}^{+}(a,b)}\omega(P)$, then the right hand side of (\ref{Fomin2}) takes the form
\begin{align*}
\sum_{\sigma\in S_{n}}\sum_{\substack{k_{i}\in\mathbb{Z}\\k_{1}+k_{2}+...+k_{n}=0}}{\rm sgn}(\sigma)\prod_{i=1}^{n}h(a_{i},\mathcal{S}^{k_{i}}b_{\sigma(i)}).
\end{align*}
\end{theorem}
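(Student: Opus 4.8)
The plan is to reduce the affine statement to the non-affine Fomin identity (Theorem \ref{Fomingen}) applied on the cylindrical lattice $\tilde G = G/\mathcal S$, and then unfold back to $G$. First I would observe that the left-hand side of (\ref{Fomin2}) is a sum over $n$-tuples of paths in the \emph{strip} $G$ with the two repulsion conditions; since the weights are $\mathcal S$-invariant, projecting each path to $\tilde G$ is weight-preserving, and the combined conditions $P_j \cap \mathrm{LE}(P_{j-1}) = \emptyset$ for $1<j\le n$ together with $P_1 \cap \mathrm{LE}(\mathcal S P_n) = \emptyset$ are exactly what is needed (as the paragraph preceding Theorem \ref{Fominaff} asserts) for the projected tuple $(\tilde P_1,\dots,\tilde P_n)$ to satisfy the cyclic condition $\tilde P_j \cap \mathrm{LE}(\tilde P_{j-1}) = \emptyset$ read with indices mod $n$. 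Conversely, any tuple of paths on $\tilde G$ satisfying the cyclic condition lifts: lift $P_1$ arbitrarily (say starting at $a_1$), then lift $P_2,\dots,P_n$ successively to the unique lifts starting at $a_2,\dots,a_n$ in the fundamental window, and this lifting is a bijection onto the configurations counted on the left. So the left-hand side equals the corresponding cyclic-repulsion generating function on $\tilde G$.

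Next I would apply Fomin's identity on $\tilde G$. Because the cyclic boundary configuration on the annulus/cylinder is the affine analogue of the counterclockwise arrangement of Figure \ref{planardomain}, the admissible permutations are no longer just the identity: they are the cyclic rotations, i.e. the affine Weyl group $\tilde A_{n-1}$ modulo the finite part, which on the level of endpoint data means pairing $a_i$ with $\mathcal S^{k_i} b_{\sigma(i)}$ where $\sigma$ ranges over $S_n$ and $\sum k_i$ is fixed. The constraint $\sum_i k_i = 0$ comes from a winding-number bookkeeping: on the cylinder the total number of times the family of paths wraps around is determined, and unwrapping to the strip forces the net shift to vanish when the endpoints $a_i$ and $b_i$ are chosen in a common fundamental window with the stated interlacing $i_n < \dots < i_1 < i_n + M$ and $j_n < \dots < j_1 < j_n + M$. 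Tracking the sign: the loop-erased switching involutions of Section \ref{rplerw} each change a permutation by a transposition and preserve weights, so exactly as in Theorem \ref{Fomingen} all non-minimal permutations cancel in pairs, leaving $\mathrm{sgn}(\sigma)$ as the surviving coefficient. The second displayed formula is then immediate: $\mathcal{H}^+(\mathbf a, \mathcal S^k \mathbf b_\sigma) = \prod_i \mathcal{H}^+(a_i, \mathcal S^{k_i} b_{\sigma(i)})$ (paths in a product of path-sets factor), so $\sum_{\mathbf P} \omega(\mathbf P) = \prod_i h(a_i, \mathcal S^{k_i} b_{\sigma(i)})$.

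The main obstacle I expect is the bijection/cancellation step on the cylinder, specifically making precise that the loop-erased switching of Section \ref{rplerw} still works when one of the two paths being switched is the translate $\mathcal S P_n$ rather than an honest path in the strip. The switching map pairs $(P_1, \mathcal S P_n)$ and redistributes a terminal segment at the first intersection vertex $v$ of $P_1$ with $\mathrm{LE}(\mathcal S P_n)$; one has to check that after switching, the resulting objects still decompose into a genuine strip-path $P_1$ and a genuine translate $\mathcal S P_n'$ — equivalently, that the cut can be performed $\mathcal S$-equivariantly so that the winding data is respected and no path is left straddling two fundamental domains in an inadmissible way. The cleanest way to handle this is to push everything down to $\tilde G$ first (where Theorem \ref{Fomingen} applies verbatim to the planar-on-the-annulus graph, with $\partial\Gamma = \mathbb Z \times \{N\}$ projecting to the single boundary circle), prove the identity there with the cyclic permutation group replacing the identity permutation, and only then lift; this isolates the topological subtlety into the single clean statement ``which permutations survive'' and relegates the $\sum k_i = 0$ condition to elementary covering-space bookkeeping. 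A secondary technical point is convergence/formality: the inner sums over $k \in \mathbb Z$ are infinite, so one should note that for fixed boundary endpoints only finitely many $k$ contribute to any given degree, keeping everything well-defined as formal power series in $\{\omega(e)\}$.
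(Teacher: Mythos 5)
Your route --- project to the cylinder $\tilde G=G/\mathcal S$, apply Theorem \ref{Fomingen} there, then lift back --- does not prove Theorem \ref{Fominaff}, and the central bijection you assert is false. The paper's own Remark \ref{abc} (and Figure \ref{cylind2}) gives a counterexample: loop-erasure does \emph{not} commute with the projection $G\to\tilde G$, because a path in the strip can intersect translates of itself, so its projection acquires loops that are erased on the cylinder but not in the strip. Consequently the set of strip families satisfying $P_j\cap LE(P_{j-1})=\emptyset$ and $P_1\cap LE(\mathcal S P_n)=\emptyset$ and the set of cylinder families satisfying the cyclic condition are genuinely different sets (there are tuples with $\tilde P_3\cap LE(\tilde P_1)=\emptyset$ but $P_1\cap LE(\mathcal S P_3)\neq\emptyset$); only their total weights agree, and the paper obtains even that equality only indirectly, for $n$ odd, and explicitly poses a direct combinatorial proof as an open problem. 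A second, independent failure is the winding bookkeeping: the left-hand side of (\ref{Fomin2}) fixes the endpoints at $b_1,\dots,b_n$ exactly (no translates), whereas on the cylinder individual winding numbers are unconstrained, so Fomin's identity there produces $\det\bigl(\sum_{k}h(a_i,\mathcal S^k b_j)\bigr)$, i.e.\ an unrestricted sum over $(k_1,\dots,k_n)\in\mathbb Z^n$ (or, after summing over translated destinations, the ``$\sum k_i=0$ mod $n$'' statements of Corollary \ref{grand} and Propositions \ref{Fominaff3}--\ref{Fominaff2}), never the exact constraint $k_1+\dots+k_n=0$ appearing in (\ref{Fomin2}). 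Your claim that ``the total number of times the family wraps around is determined'' is not true on the cylinder. Finally, the cylinder application of Fomin's identity with surviving cyclic permutations only matches a single determinant when all cyclic signs are $+1$, i.e.\ for $n$ odd; the paper remarks that it does not know how to treat even $n$ this way, and Theorem \ref{Fominaff} is precisely the tool introduced to avoid this obstruction.

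The paper's actual proof stays entirely in the strip and runs a sign-reversing involution directly on the right-hand side of (\ref{Fomin2}): set $P_0:=\mathcal S P_n$, locate the minimal violating pair $(i',j')$ and the first intersection vertex $v'$ on $LE(P_{i'})$, and apply Fomin's loop-erased switching. The wrap-around case you flag as the ``main obstacle'' is resolved concretely there: when $i'=0$ one switches $\mathcal S P_n$ with $P_{j'}$ and then translates the resulting path by $\mathcal S^{-1}$ to define $\tilde P_n$, which changes $(k_{j'},k_n)$ to $(k_n+1,k_{j'}-1)$ --- preserving $\sum_i k_i=0$ by construction --- flips the sign via the transposition $(j',n)$, and preserves weights by $\mathcal S$-invariance. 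After cancellation, the surviving terms force $\sigma=\mathrm{id}$ and $k_1=\dots=k_n=0$, giving the left-hand side. Your final observation that the right-hand side factors as $\prod_i h(a_i,\mathcal S^{k_i}b_{\sigma(i)})$ is correct but is the trivial part of the statement.
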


\begin{figure}[tb]
%\blankbox{.6\columnwidth}{5pc}
\includegraphics[scale=1.2]{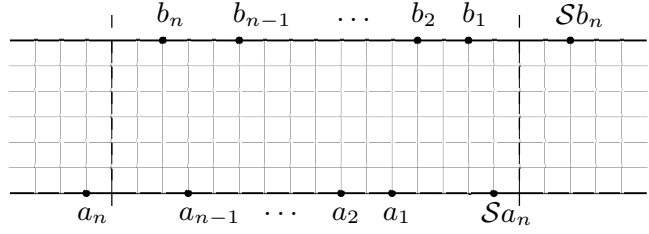}
\caption{The lattice strip $G$ with vertex set $V=\mathbb{Z}\times\{0,1,...,N\}$.}
\label{peru}
\end{figure}

\begin{remark}
Unlike Fomin's identity, the extra condition $P_{1}\cap LE(\mathcal{S}P_{n})=\emptyset$ in (\ref{Fomin2}) forces us to consider families of paths where the $n$ end vertices are permutations {\it and} translations of the originals $(b_{1},...,b_{n})$, see the proof below. In particular, the end vertices should vary among $n$-tuples ${\bf \mathcal{S}^{k}b_{\sigma}}=(\mathcal{S}^{k_{1}}b_{\sigma(1)},...,\mathcal{S}^{k_{n}}b_{\sigma(n)})$, with $\sigma\in S_{n}$ and $k_{1}+...+k_{n}=0$, $k_{i}\in\mathbb{Z}$. This can be thought of as the action of the (infinite) affine symmetric group $\tilde{A}_{n}$ on the vertices $(b_{1},...,b_{n})$. 
\end{remark}

\begin{remark}
In the acyclic case, the above theorem agrees with the Gessel-Zeilberger formula for counting paths in alcoves \cite{Gessel2}.
\end{remark}

\begin{proof}[Proof of Theorem \ref{Fominaff}]
We will follow the strategy of proof of Fomin's identity (Theorem 6.1 in \cite{Fomin}), that is, we will give a sign-reversing involution on the set of summands on the right hand side of (\ref{Fomin2}) which violate the condition
\begin{align}\label{rusia}
P_{j}\cap & LE(P_{i})=\emptyset,\quad\text{for all}\quad 1\leq i<j\leq n,\quad\text{and}\\\notag
&P_{j}\cap LE(\mathcal{S}P_{n})=\emptyset,\quad\text{for all}\quad 1\leq j\leq n.
\end{align}
As a consequence, the sum of all of the latter terms will vanish and, the sum of the remaining terms, the ones which satisfy (\ref{rusia}), will be simplified to the desired expression in the left-hand side of (\ref{Fomin2}). 

The sign-reversing involution is as follows. For $n\geq2$, let $\sigma\in S_{n}$ 
 and $k_{i}$, $1\leq i\leq n$, integers such that $k_{1}+k_{2}+...+k_{n}=0$. Consider a family of paths ${\bf P}\in\mathcal{H}({\bf a},{\bf \mathcal{S}^{k}b_{\sigma}})$ which violates the condition (\ref{rusia}). We will construct a new family of paths $\tilde{\bf P}\in\mathcal{H}({\bf a},{\bf \mathcal{S}^{\tilde k}b_{\tilde\sigma}})$, with $\tilde\sigma\in S_{n}$ and $\tilde k_{1}+\tilde k_{2}+...+\tilde k_{n}=0$, that also violates the condition (\ref{rusia}), and satisfes $\omega(\tilde{\bf P})=\omega({\bf P})$ and sgn$(\tilde\sigma)=-$sgn$(\sigma)$. The construction of the new family $\tilde{\bf P}$ is essentially an application of the Fomin's loop-erased switching (Section \ref{rplerw}) over the paths
\begin{align*}
P_{n},P_{n-1},...,P_{1},\mathcal{S}P_{n}.
\end{align*} 
This construction will also ensure that the correspondence
${\bf P}\mapsto\tilde{\bf P}$ is one-to-one, as desired.  

To make the notation simpler, let us denote $a_{0}:=\mathcal{S}a_{n}$ and the corresponding path starting at $\mathcal{S}a_{n}$ by $P_{0}:=\mathcal{S}P_{n}$. Choose indexes $i'$ and $j'$ as follows. Since the family ${\bf P}\in\mathcal{H}({\bf a},{\bf \mathcal{S}^{k}b_{\sigma}})$ violates (\ref{rusia}), the set of indexes $0\leq i<j\leq n$ such that $P_{j}\cap LE(P_{i})\not=\emptyset$ is not empty. Therefore, we can choose $0\leq i^{'}< n$ the minimum among those indexes and consider the path LE$(P_{i'})$. Along the latter path, choose a vertex $v'$ and index $j'$ as follows:
\begin{itemize}
	\item Along the vertex sequence of the path LE$(P_{i'})$, choose $v'$ as the `closest' (that is, with minimal index) intersection vertex to the starting vertex $a_{i'}$.
	\item Now consider the set of indexes $\{j:1\leq i'<j\leq n\}$ such that $P_{j}$ intersects LE$(P_{i'})$ at $v'$ (in other words, $v'\in P_{j}\cap\text{LE}(P_{i'})$), and let $j'$ the minimum of this set.
\end{itemize}

We have two different scenarios, depending on weather $P_{i'}$ is the path $\mathcal{S}P_{n}$ or not. If $i'\not=0$ (and $P_{i'}$ is not the path $\mathcal{S}P_{n}$), we perform the usual loop-erased switching (Section \ref{rplerw}) over the paths $P_{i'}$ and $P_{j'}$ at the vertex $v'$, that is, we define new paths
\begin{align*}
\tilde P_{i'}: a_{i'}&\xrightarrow{P_{i'}'}v'\xrightarrow{P_{j'}''}\mathcal{S}^{k_{j'}}b_{\sigma(j')}\\
\tilde P_{j'}: a_{j'}&\xrightarrow{P_{j'}'}v'\xrightarrow{P_{i'}''}\mathcal{S}^{k_{i'}}b_{\sigma(i')}.
\end{align*}
For the remaining paths, $i\notin\{i',j'\}$, we define $\tilde P_{i}:=P_{i}$. The original family ${\bf P}\in\mathcal{H}({\bf a},{\bf \mathcal{S}^{k}b_{\sigma}})$ is then mapped to a new family of paths $\tilde{\bf P}\in\mathcal{H}({\bf a},{\bf \mathcal{S}^{\tilde k}b_{\tilde\sigma}})$, where $\tilde{\bf k}=(k_{1},...,k_{j'},...,k_{i'},...,k_{n})$ and $\tilde\sigma=\sigma\circ(i',j')\in S_{n}$ are the vector ${\bf k}$ and permutation $\sigma$, with the entries $i'$ and $j'$ interchanged. Note that the sum of the entries of $\tilde{\bf k}$ is zero, as desired, and ${\rm sgn}(\tilde\sigma)=-{\rm sgn}(\sigma)$. Moreover, the family $\tilde{\bf P}$ also violates the condition (\ref{rusia}) since the paths $\tilde P_{i'}$ and $\tilde P_{j'}$ share the vertex $v'$. Note that the weights are also preserved: $\omega(\tilde{\bf P})=\omega({\bf P})$.

\begin{figure}[tb]
%\blankbox{.6\columnwidth}{5pc}
\includegraphics[scale=1.2]{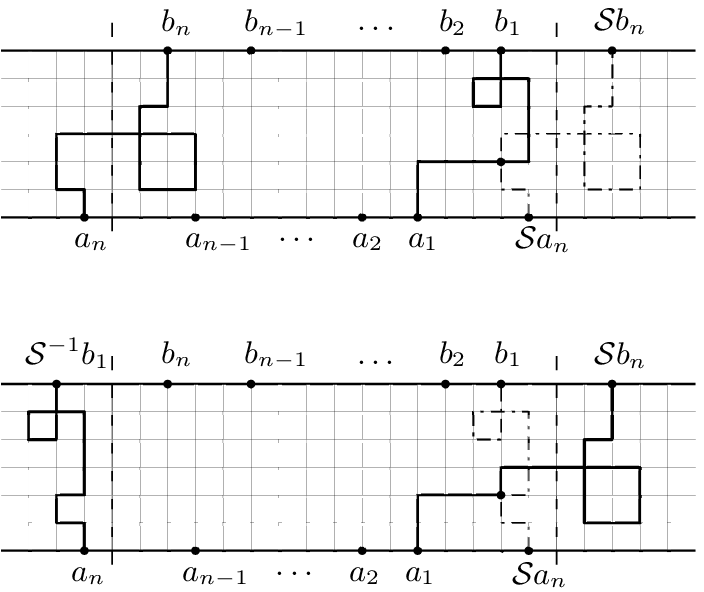}
\caption{Loop-erased switching over the paths $P_1$ and $\mathcal{S}P_n$.}
\label{croacia}
\end{figure}

In the second case, when $i'=0$, a more careful selection of paths is needed: we perform the loop-erased switching over the paths $\mathcal{S}P_{n}$ and $P_{j'}$:
\begin{align*}
\mathcal{S}P_{n}&: \mathcal{S}a_{n}\xrightarrow{(\mathcal{S}P_{n})'}v'\xrightarrow{(\mathcal{S}P_{n})''}\mathcal{S}\mathcal{S}^{k_{n}}b_{\sigma(n)}\\
P_{j'}&: a_{j'}\xrightarrow{P_{j'}'}v'\xrightarrow{P_{j'}''}\mathcal{S}^{k_{j'}}b_{\sigma(j')},
\end{align*}
and create the two new paths
\begin{align*}
\tilde P_{n}&: a_{n}\xrightarrow{P_{n}'}\mathcal{S}^{-1}v'\xrightarrow{\mathcal{S}^{-1}P_{j'}''}\mathcal{S}^{-1}\mathcal{S}^{k_{j'}}b_{\sigma(j')}\\
\tilde P_{j'}&: a_{j'}\xrightarrow{P_{j'}'}v'\xrightarrow{(\mathcal{S}P_{n})''}\mathcal{S}\mathcal{S}^{k_{n}}b_{\sigma(n)},
\end{align*}
(see Figure \ref{croacia}). The rest of the paths remain invariant, $\tilde P_{i}:=P_{i}$ for $i\notin\{n,j'\}$. Thus, the new family $\tilde{\bf P}=(\tilde{P}_1,...,\tilde{P}_n)$ satisfies $\tilde{\bf P}\in\mathcal{H}({\bf a},{\bf \mathcal{S}^{\tilde k}b_{\tilde\sigma}})$, with $\tilde{\bf k}=(k_{1},...,k_{n}+1,...,k_{n-1},k_{j'}-1)$ and $\tilde\sigma=\sigma\circ(j',n)\in S_{n}$. Note again that the sum of the entries of $\tilde{\bf k}$ is zero and ${\rm sgn}(\tilde\sigma)=-{\rm sgn}(\sigma)$. Moreover, since the weight $\omega$ is invariant under horizontal translations, we have $\omega(\tilde{\bf P})=\omega({\bf P})$. We only need to show that the family $\tilde{\bf P}$ violates the condition (\ref{rusia}) as well, but this is clearly the case since the paths $\tilde P_{j'}$ and $LE(\mathcal{S}\tilde P_{n})$ intersect at the vertex $v'$, that is
\begin{align*}
\tilde P_{j'}\cap LE(\mathcal{S}\tilde P_{n})\not=\emptyset.
\end{align*}

Therefore, in both cases, applying the loop-erased switching to the family $\tilde{\bf P}$, we recover the original family ${\bf P}$, so the corresponding map from ${\bf P}$ to $\tilde{\bf P}$ is an involution on the set of paths that violate (\ref{rusia}). Moreover, since 
${\rm sgn}(\sigma)\omega({\tilde{\bf P}})=-{\rm sgn}(\tilde\sigma)\omega(\tilde{\bf P})$, the sum of all these terms vanishes on the right hand side of (\ref{Fomin2}), and therefore the total sum is 
\begin{align}\label{elaborada}
\sum_{\sigma\in S_{n}}\sum_{\substack{k_{i}\in\mathbb{Z}\\k_{1}+k_{2}+...+k_{n}=0}}{\rm sgn}(\sigma)\sum_{\substack{{\bf P}\in\mathcal{H}^{+}({\bf a},\mathcal{S}^{\bf k}{\bf b}_{\sigma}) \\ {\bf P}\,\,\text{satisfies}\,\,(\ref{rusia})}}\omega({\bf P}).
\end{align}
Finally, in the expression above, if a family ${\bf P}\in\mathcal{H}^{+}({\bf a},\mathcal{S}^{\bf k}{\bf b}_{\sigma})$ satisfies (\ref{rusia}), the loop-erased parts $LE(P_{j})$, $1\leq j\leq n$, are pairwise disjoint and then $\sigma$ must be the identity permutation and $k_{1}=k_{2}=...=k_{n}=0$, as required. In this case, the condition (\ref{rusia}) on paths can be simplified to the one in the left hand side of (\ref{Fomin2}).

\end{proof}

\subsection{Projections onto the cylinder}
\label{projections}
As described in the introduction of Section \ref{avfi}, a useful application of Theorem \ref{Fominaff} is when we consider the projection of the lattice strip $G$ onto the {\it cylindrical lattice}, modulo a translation $\mathcal{S}$ (see Figure \ref{cylind}). Intuitively, a family of $n$ (loop-erased) paths can wind around the cylinder several times (equivalently, translations of the end vertex by $\mathcal{S}^{m}=\mathcal{S}\circ\mathcal{S}^{m-1}$, $m\in\mathbb{Z}$, in the strip) before reaching its destination. Moreover, there are exactly $n$ different ways in which the $n$ paths can reach their destination without intersecting, given by the $n$ `cyclic permutations' of the end vertices.

\begin{figure}[tb]
%\blankbox{.6\columnwidth}{5pc}
\includegraphics[scale=0.6]{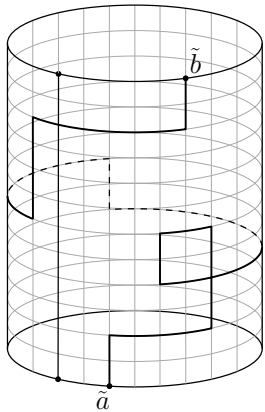}
\caption{A path in the cylindrical lattice $\tilde{G}$ with winding number $k=1$.}
\label{cylind}
\end{figure}

Corollary \ref{grand} and Proposition \ref{Fominaff3} make the above considerations precise. These considerations give a more tractable form of Theorem \ref{Fominaff}, first as a sum of $n$ determinants in Corollary \ref{grand} and then as a single determinant in Proposition \ref{Fominaff3}.

\begin{cor}\label{grand}
In the context of Theorem \ref{Fominaff}, by summing up in (\ref{Fomin2}) over all the weights of all families of paths ${\bf P}=(P_{1},...,P_{n})$ starting at ${\bf a}=(a_{1},...,a_{n})$, and ending at all possible translations of ${\bf b}=(b_{1},...,b_{n})$ by $\mathcal{S}^{m}$, $m\in\mathbb{Z}$, we obtain
\begin{align}\label{cylinder}
\sum_{\substack{{\bf P}\in\bigcup_{m\in\mathbb{Z}}\mathcal{H}^{+}({\bf a},{\bf \mathcal{S}^{m}}{\bf b}) \\ P_{j}\cap\text{LE}(P_{j-1})=\emptyset,\quad 1< j\leq n \\ P_{1}\cap\text{LE}(\mathcal{S}P_{n})=\emptyset}}\omega({\bf P})=\sum_{\sigma\in S_{n}}\sum_{\substack{k_{1}+k_{2}+...+k_{n}=0\\\rm{mod}\,\, n}}{\rm sgn}(\sigma)\sum_{{\bf P}\in\mathcal{H}^{+}({\bf a},{\bf \mathcal{S}^{k}b_{\sigma}})}\omega({\bf P}),
\end{align}
where ${\bf \mathcal{S}^{m}b}=(\mathcal{S}^{m}b_{1},...,\mathcal{S}^{m}b_{n})$. Moreover, if $\eta=e^{i\frac{2\pi}{n}}$ is a complex root of unity, then the right hand side above can be expressed as the sum of $n$ determinants:
\begin{align}\label{simpl}
\frac{1}{n}\sum_{u=0}^{n-1}\det(\sum_{k\in\mathbb{Z}}\eta^{uk}h(a_{i},\mathcal{S}^{k}b_j))_{i,j=1}^{n},
\end{align}
where $h(a,b)=\sum_{P\in\mathcal{H}^{+}(a,b)}\omega(P)$, $a\in V, b\in\partial\Gamma$.
\end{cor}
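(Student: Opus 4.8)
The plan is to derive Corollary \ref{grand} from the affine Fomin identity (Theorem \ref{Fominaff}) in two steps: first produce (\ref{cylinder}) by summing (\ref{Fomin2}) over all horizontal translates of the endpoint configuration ${\bf b}$, and then recast the resulting alternating sum over $\{(\sigma,{\bf k}):k_1+\dots+k_n\equiv 0\pmod n\}$ as the average of $n$ determinants (\ref{simpl}) via a roots-of-unity filter on the cyclic group $\mathbb{Z}/n\mathbb{Z}$.

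\emph{Step 1.} For each fixed $m\in\mathbb{Z}$ I would apply Theorem \ref{Fominaff} with ${\bf b}$ replaced by ${\bf \mathcal{S}^{m}b}=(\mathcal{S}^{m}b_{1},\dots,\mathcal{S}^{m}b_{n})$; since $\mathcal{S}$ translates by $(M,0)$, this shifts every top coordinate $j_k$ to $j_k+mM$, so the interlacing hypotheses $j_n<\dots<j_1<j_n+M$ and the counterclockwise ordering of the $2(n+1)$ boundary vertices are preserved. The left-hand side of (\ref{Fomin2}) for ${\bf \mathcal{S}^{m}b}$ is the total weight of path families ${\bf P}\in\mathcal{H}^{+}({\bf a},{\bf \mathcal{S}^{m}b})$ satisfying the two loop-erased constraints, which are stated purely in terms of the paths; summing over $m$ and using that the sets $\mathcal{H}^{+}({\bf a},{\bf \mathcal{S}^{m}b})$ are pairwise disjoint gives precisely the left-hand side of (\ref{cylinder}). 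On the right-hand side, the $(\sigma,{\bf k})$-term (with $\sum_i k_i=0$) now has endpoints $\mathcal{S}^{k_i+m}b_{\sigma(i)}$, so I would substitute $\ell_i:=k_i+m$; the correspondence $(m,{\bf k})\mapsto(\ell_1,\dots,\ell_n)$ is a bijection from $\{(m,{\bf k}):\sum_i k_i=0\}$ onto $\{(\ell_1,\dots,\ell_n)\in\mathbb{Z}^n:\sum_i\ell_i\equiv 0\pmod n\}$ (with inverse $m=\tfrac1n\sum_i\ell_i$, $k_i=\ell_i-m$), while $\sum_{{\bf P}\in\mathcal{H}^{+}({\bf a},{\bf \mathcal{S}^{k}b_{\sigma}})}\omega({\bf P})=\prod_i h(a_i,\mathcal{S}^{k_i}b_{\sigma(i)})$ by the definition of $h$. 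This yields the right-hand side of (\ref{cylinder}).

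\emph{Step 2.} I would expand each determinant in (\ref{simpl}) by the Leibniz formula and distribute the row sums, obtaining
\[
\det\Big(\sum_{k\in\mathbb{Z}}\eta^{uk}h(a_i,\mathcal{S}^{k}b_j)\Big)_{i,j=1}^{n}=\sum_{\sigma\in S_n}{\rm sgn}(\sigma)\sum_{\ell_1,\dots,\ell_n\in\mathbb{Z}}\eta^{u\sum_i\ell_i}\prod_{i=1}^{n}h(a_i,\mathcal{S}^{\ell_i}b_{\sigma(i)}).
\]
Averaging over $u=0,1,\dots,n-1$ and using that, for $\eta=e^{2\pi i/n}$ a primitive $n$-th root of unity, $\tfrac1n\sum_{u=0}^{n-1}\eta^{uK}$ equals $1$ when $n\mid K$ and $0$ otherwise, annihilates every multi-index with $\ell_1+\dots+\ell_n\not\equiv0\pmod n$ and leaves exactly the right-hand side of (\ref{cylinder}). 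Composing Steps 1 and 2 proves the corollary.

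\emph{On rigour and the main difficulty.} There is no deep obstacle here: all the substance is in Theorem \ref{Fominaff}, and what remains is careful bookkeeping of the index sets together with the standard cyclic orthogonality. The one point deserving care is the meaning of the infinite sums $\sum_{k\in\mathbb{Z}}\eta^{uk}h(a_i,\mathcal{S}^{k}b_j)$ and the interchanges of summation. Working in the ring of formal power series in $\{\omega(e)\}_{e\in E}$, any path from $a_i$ to $\mathcal{S}^{k}b_j$ has length at least the horizontal distance between these two vertices, which grows linearly in $|k|$; hence only finitely many $k$ contribute to each monomial of bounded total degree, so $\sum_{k}\eta^{uk}h(a_i,\mathcal{S}^{k}b_j)$ is a well-defined formal power series and the Leibniz expansion, the reindexing $(m,{\bf k})\leftrightarrow(\ell_1,\dots,\ell_n)$, and the exchange of the $u$-average with the $(\ell_i)$-sum are all legitimate degree by degree. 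In a probabilistic reading the same identities hold as soon as the relevant sums over translates converge absolutely.
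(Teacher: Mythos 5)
Your proposal is correct and follows essentially the same route as the paper: apply the affine identity (\ref{Fomin2}) to each translate $\mathcal{S}^{m}{\bf b}$ and reindex the pairs $(m,{\bf k})$ with $\sum_i k_i=0$ bijectively onto $\{\boldsymbol{\ell}:\sum_i\ell_i\equiv 0\bmod n\}$, then use the cyclic roots-of-unity filter to trade the mod-$n$ constraint for the average of $n$ determinants in (\ref{simpl}). Your explicit bijection and the formal-power-series justification of the infinite sums simply spell out steps the paper leaves as "easily simplified," and your Step 2 is the paper's computation run in the reverse direction.
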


\begin{proof}
Using the identity (\ref{Fomin2}) and summing up over all the weights as indicated in the statement of the corollary, the left hand side of (\ref{cylinder}) takes the form
\begin{align*}
\sum_{m\in\mathbb{Z}}\sum_{\sigma\in S_{n}}\sum_{\substack{k_{i}\in\mathbb{Z}\\k_{1}+k_{2}+...+k_{n}=0}}{\rm sgn}(\sigma)\sum_{{\bf P}\in\mathcal{H}^{+}({\bf a},{\bf \mathcal{S}^{m+k}b_{\sigma}})}\omega({\bf P}),
\end{align*}
which, in turn, can be easily simplified to the desired expression on the right-hand side. For the second part, note that, if $\eta=e^{i\frac{2\pi}{n}}$ is a complex root of unity, then we can eliminate the condition $\sum_{i=1}^{n} k_{i}=0$ mod $n$ by using the identity
\begin{align*}
\frac{1}{n}\sum_{u=0}^{n-1}\eta^{u\sum_{i=1}^{n} k_{i}}=\left\{
	\begin{array}{ll}
		1  & \mbox{if } \sum_{i=1}^{n}k_{i}=0,\,\,\text{mod}\,n \\
		0 & \mbox{otherwise}.
	\end{array}
\right.
\end{align*}
Then, the right-hand side of (\ref{cylinder}) can be written as
\begin{align*}
\frac{1}{n}\sum_{u=0}^{n-1}\sum_{\sigma\in S_{n}}{\rm sgn}(\sigma)\sum_{k_{1},...,k_{n}\in\mathbb{Z}}\eta^{u\sum_{i=1}^{n} k_{i}}\sum_{{\bf P}\in\mathcal{H}^{+}({\bf a},{\bf \mathcal{S}^{k}b_{\sigma}})}\omega({\bf P}),
\end{align*}
and the latter as
\begin{align*}
\frac{1}{n}\sum_{u=0}^{n-1}\sum_{\sigma\in S_{n}}{\rm sgn}(\sigma)\prod_{i=1}^{n}\sum_{k\in\mathbb{Z}}\eta^{uk}\sum_{P\in\mathcal{H}^{+}(a_{i},\mathcal{S}^{k}b_{\sigma(i)})}\omega(P).
\end{align*}
The above expression is (\ref{simpl}).

\end{proof}

\begin{proposition}\label{Fominaff3}
Denote by $[\ell]\in S_n$ the cyclic permutation shifted by $\ell=0,1,...,n-1$ :
$$[\ell](k)=k-\ell,\,\,\mod n,\,\,\,{\rm in}\,\,\,\{1,...,n\}.$$
Let ${\bf a}=(a_{1},...,a_{n})$ and ${\bf b}=(b_1,b_2,...,b_n)$ be the vectors of vertices of Theorem \ref{Fominaff}. For each $[\ell]\in S_{n}$, $\ell=0,...,n-1$, define the $n$-tuple:
\begin{align}\label{cyc}
{\bf k_{\ell}}=(\underbrace{1,...,1,}_{\ell\,\,\rm{times}}\underbrace{0,...,0}_{n-\ell\,\,\rm{times}}).
\end{align}
We have the following 
\begin{align}\label{weighted2}
\sum_{\substack{[\ell]\in S_{n}\\ \ell=0,...,n-1}}\sum_{\substack{{\bf P}\in\bigcup_{m\in\mathbb{Z}}\mathcal{H}^{+}({\bf a},{\bf \mathcal{S}^{m+k_{\ell}}}{\bf b}_{[\ell]}) \\ P_{j}\cap\text{LE}(P_{j-1})=\emptyset,\quad 1< j\leq n \\ P_{1}\cap\text{LE}(\mathcal{S}P_{n})=\emptyset}}\omega({\bf P})=\det\left(\sum_{k\in\mathbb{Z}}\zeta^{k}h(a_{i},\mathcal{S}^{k}b_{j})\right)_{i,j=1}^{n},
\end{align}
where  
$$
\zeta= \left\{
        \begin{array}{ll}
            1 & \quad \text{if $n$ is odd} \\
            -1 & \quad \text{if $n$ is even},
        \end{array}
    \right.
$$
and $h(a,b)=\sum_{P\in\mathcal{H}^{+}(a,b)}\omega(P)$. In particular, if the weight function $\omega$ is non-negative, then the above determinant is non-negative.
\end{proposition}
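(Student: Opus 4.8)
The plan is to derive Proposition \ref{Fominaff3} from Corollary \ref{grand}, specifically from the representation (\ref{simpl}) of the combinatorial sum as an average $\frac{1}{n}\sum_{u=0}^{n-1}\det(\sum_{k\in\mathbb{Z}}\eta^{uk}h(a_i,\mathcal{S}^k b_j))$ of $n$ determinants. The point is that the left-hand side of (\ref{weighted2}) is, by construction, the left-hand side of (\ref{cylinder}) re-indexed: the union over the $n$ cyclic permutations $[\ell]$ together with the shift vectors ${\bf k}_\ell$ in (\ref{cyc}) and the further global translations by $\mathcal{S}^m$ is precisely a way of listing the families of paths counted once each on the left of (\ref{cylinder}). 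So the first step is to check that bookkeeping: each admissible family of loop-erased-non-intersecting paths on the cylinder (in the sense $P_j\cap LE(P_{j-1})=\emptyset$ and $P_1\cap LE(\mathcal{S}P_n)=\emptyset$) lifts to the strip with end-vertices of the form $\mathcal{S}^{m+k_\ell}{\bf b}_{[\ell]}$ for a unique $\ell\in\{0,\dots,n-1\}$ and unique $m\in\mathbb{Z}$, so that the double sum on the left of (\ref{weighted2}) equals the left-hand side of (\ref{cylinder}), hence equals (\ref{simpl}).

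Granting that, the entire content of the proposition reduces to the algebraic identity
\begin{align*}
\frac{1}{n}\sum_{u=0}^{n-1}\det\Bigl(\sum_{k\in\mathbb{Z}}\eta^{uk}h(a_i,\mathcal{S}^k b_j)\Bigr)_{i,j=1}^n=\det\Bigl(\sum_{k\in\mathbb{Z}}\zeta^{k}h(a_i,\mathcal{S}^k b_j)\Bigr)_{i,j=1}^n,
\end{align*}
where $\eta=e^{2\pi i/n}$ and $\zeta=1$ for $n$ odd, $\zeta=-1$ for $n$ even. To prove this I would expand the left-hand determinant via the Leibniz formula, obtaining
\begin{align*}
\frac{1}{n}\sum_{u=0}^{n-1}\sum_{\sigma\in S_n}{\rm sgn}(\sigma)\prod_{i=1}^n\sum_{k_i\in\mathbb{Z}}\eta^{uk_i}h(a_i,\mathcal{S}^{k_i}b_{\sigma(i)})=\sum_{\sigma\in S_n}{\rm sgn}(\sigma)\sum_{\substack{k_1,\dots,k_n\in\mathbb{Z}\\k_1+\dots+k_n\equiv 0\,(n)}}\prod_{i=1}^n h(a_i,\mathcal{S}^{k_i}b_{\sigma(i)}),
\end{align*}
using the root-of-unity filter $\frac1n\sum_{u}\eta^{u\sum k_i}=\mathbbm{1}[\sum k_i\equiv0\ (n)]$ already invoked in the proof of Corollary \ref{grand}. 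On the other side, expanding $\det(\sum_k\zeta^k h(a_i,\mathcal{S}^k b_j))$ by Leibniz gives $\sum_{\sigma}{\rm sgn}(\sigma)\sum_{k_1,\dots,k_n}\zeta^{\sum k_i}\prod_i h(a_i,\mathcal{S}^{k_i}b_{\sigma(i)})$. So the identity to establish is that, after summing over $\sigma\in S_n$ with signs, the weight $\mathbbm{1}[\sum k_i\equiv 0\ (n)]$ can be replaced by $\zeta^{\sum k_i}$.

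The crux — and the step I expect to be the main obstacle — is this sign-matching. The mechanism is a change of variables on the index set: cyclically shifting $(k_1,\dots,k_n)$ does not change $\sum k_i$, but composing $\sigma$ with a cyclic permutation changes its sign by ${\rm sgn}$ of an $n$-cycle, which is $(-1)^{n-1}$. Concretely, one shows that the family of tuples with $\sum k_i$ in a fixed residue class mod $n$ is carried into the family with $\sum k_i=0$ by subtracting a multiple of the all-ones vector, which forces a compensating cyclic relabelling $\sigma\mapsto\sigma\circ c^{-1}$, picking up $(-1)^{n-1}$ per unit of shift; summing the geometric-type series in the residue shows the net effect is to weight the residue-$r$ contribution by $((-1)^{n-1})^{r}$, i.e. by $\zeta^r$ since $(-1)^{n-1}=\zeta$ (indeed $\zeta=1$ exactly when $n$ is odd). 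One must be careful that these are conditionally-convergent/formal-power-series manipulations — as throughout the paper, $h(a,b)$ lives in a ring of formal power series, so rearrangements are legitimate termwise — and that the reindexing is a genuine bijection on pairs $(\sigma,{\bf k})$. Once the bijection and the sign count $(-1)^{n-1}=\zeta$ are in place, both expansions agree term by term and the proposition follows; the final non-negativity remark is immediate since the left-hand side of (\ref{weighted2}) is a sum of products of non-negative quantities when $\omega\geq 0$.
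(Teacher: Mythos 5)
Your opening bookkeeping step is where the argument breaks. The left-hand side of (\ref{weighted2}) is \emph{not} a re-indexing of the left-hand side of (\ref{cylinder}): the $\ell$-th summand counts families in which $P_i$ ends at $\mathcal{S}^{m+(k_\ell)_i}b_{[\ell](i)}$, i.e.\ the paths are matched to the \emph{cyclically rotated} endpoints (with the first $\ell$ of them shifted one extra period), whereas (\ref{cylinder}) fixes the identity matching $P_i\to\mathcal{S}^m b_i$. For $\ell\neq0$ these families are never counted in (\ref{cylinder}), and they generally carry positive weight (paths can wind), so the left side of (\ref{weighted2}) equals $\sum_{\ell=0}^{n-1}\mathcal{G}({\bf a},\mathcal{S}^{k_\ell}{\bf b}_{[\ell]})$, where $\mathcal{G}({\bf a},{\bf b})$ denotes the left side of (\ref{cylinder}); only the $\ell=0$ term equals (\ref{simpl}). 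Consequently the ``algebraic identity'' you reduce to, $\frac1n\sum_{u=0}^{n-1}\det\bigl(\sum_k\eta^{uk}h(a_i,\mathcal{S}^kb_j)\bigr)=\det\bigl(\sum_k\zeta^{k}h(a_i,\mathcal{S}^kb_j)\bigr)$, is false in general: for $n=2$ it asserts $\det\bigl(\sum_k h(a_i,\mathcal{S}^kb_j)\bigr)=\det\bigl(\sum_k(-1)^k h(a_i,\mathcal{S}^kb_j)\bigr)$, and together with the paper's two formulas it would force every family with a nontrivial cyclic matching to have total weight zero. The sign-matching mechanism you propose cannot rescue it, because the reindexing $({\bf k},\sigma)\mapsto({\bf k}-r(1,\dots,1),\sigma\circ c^{-r})$ changes the summand $\prod_i h(a_i,\mathcal{S}^{k_i}b_{\sigma(i)})$: translation invariance gives $h(\mathcal{S}a,\mathcal{S}b)=h(a,b)$, not $h(a,\mathcal{S}^{-1}b)=h(a,b)$, so there is no term-by-term bijection and no cancellation inside a single determinant.

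The correct route (the paper's) keeps the sum over $\ell$ and applies Corollary \ref{grand} once for each rotated boundary datum $\mathcal{S}^{k_\ell}{\bf b}_{[\ell]}$. Since $\sum_k\eta^{uk}h(a_i,\mathcal{S}^{k+1}b_j)=\eta^{-u}\sum_k\eta^{uk}h(a_i,\mathcal{S}^{k}b_j)$, substituting these data into the $u$-th determinant of (\ref{simpl}) amounts to a column permutation times the factor $\eta^{-u\ell}$, so $\mathcal{G}({\bf a},\mathcal{S}^{k_\ell}{\bf b}_{[\ell]})=\frac1n\sum_u\eta^{-u\ell}\,{\rm sgn}([\ell])\det\bigl(\sum_k\eta^{uk}h(a_i,\mathcal{S}^kb_j)\bigr)$. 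Summing over $\ell$, the roots-of-unity average is taken in $\ell$ (not in $u$) and, using ${\rm sgn}([\ell])=1$ for $n$ odd and $(-1)^\ell$ for $n$ even, it annihilates every $u$ except $u=0$ (odd $n$) or $u=n/2$ (even $n$), leaving exactly the single determinant with $\zeta=\eta^{u}=\pm1$. In other words, the averaging you tried to carry out within one determinant identity is really performed across the $n$ cyclically rotated boundary conditions; without that extra sum the statement you set out to prove does not hold.
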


\begin{proof}
Let $\mathcal{G}({\bf a},{\bf b})$ denote the left-hand side of (\ref{cylinder}). Using Corollary \ref{grand}, a simple calculation shows that for each $\ell=0,...,n-1$:
\begin{align*}
\mathcal{G}({\bf a},{\bf \mathcal{S}^{k_{\ell}}b}_{[\ell]})=\frac{1}{n}\sum_{u=0}^{n-1}\eta^{-\ell u}{\rm sgn}([\ell])\det(\sum_{k\in\mathbb{Z}}\eta^{uk}h(a_{i},\mathcal{S}^{k}b_j)).
\end{align*}
Therefore, the left hand side of (\ref{weighted2}) can be expressed as
\begin{align*}
\sum_{\ell=0}^{n-1}\mathcal{G}({\bf a},{\bf \mathcal{S}^{k_{\ell}}b}_{[\ell]})=\sum_{u=0}^{n-1}\left(\frac{1}{n}\sum_{\ell=0}^{n-1}\eta^{-\ell u}{\rm sgn}([\ell])\right)\det(\sum_{k\in\mathbb{Z}}\eta^{uk}h(a_{i},\mathcal{S}^{k}b_j)),
\end{align*}
which is a sum of $n$ determinants.

Case 1. If $n$ is odd, sgn$([\ell])=1$ for all $\ell=0,...,n-1$ and then
\begin{align*}
\frac{1}{n}\sum_{\ell=0}^{n-1}\eta^{-\ell u}=\left\{
	\begin{array}{ll}
		1  & \mbox{if } -u=0,\,\,\text{mod}\,n \\
		0 & \mbox{otherwise},
	\end{array}
\right.
\end{align*}
therefore, the only remaining determinant is the one corresponding to $u=0$, and so $\zeta=\eta^{u}=1$.

Case 2. If $n$ is even, sgn$([\ell])=(-1)^{\ell}$ for all $\ell=0,...,n-1$ and
\begin{align*}
\frac{1}{n}\sum_{\ell=0}^{n-1}(-1)^{\ell}\eta^{-\ell u}=\frac{1}{n}\sum_{\ell=0}^{n-1}\eta^{\ell(\frac{n}{2}-u)}.
\end{align*}
The above sum is $1$ if and only if $\frac{n}{2}-u=0$ mod $n$, and zero otherwise. The only remaining determinant is then $u=\frac{n}{2}$, and therefore $\zeta=\eta^{u}=-1$, which concludes the proof.

\end{proof}

\begin{remark}
\label{Fominaff355}
Assume that the vertex set $V=\mathbb{Z}\times\{0,1,...,N\}$ is the state space of a time-homogeneous Markov chain $X$ and the possible transitions between states are determined by the lattice strip $G$ introduced at the beginning of the section. In other words, the transition probabilities $p(u,v)$ are positive if and only if there is an edge $u\stackrel{e}{\to}v$, in which case $\omega(e)=p(u,v)$. Assume that the transition probabilities are space-invariant with respect to a fixed horizontal translation $\mathcal{S}:G\to G$.
Then, the assertion of Proposition \ref{Fominaff3} has the following probabilistic interpretation: if vertices $a_{n},...,a_{1},b_{1},...,b_{n}$ are ordered counterclockwise along the boundary (as in Figure \ref{croacia}), then the $n\times n$ determinant
\begin{align*}
\det\left(\sum_{k\in\mathbb{Z}}\zeta^{k}h(a_{i},\mathcal{S}^{k}b_{j})\right)_{i,j=1}^{n},
\end{align*}
of hitting probabilities $h(a_{i},\mathcal{S}^{k}b_{j})$, where $\mathcal{S}^{k}=\mathcal{S}\circ\mathcal{S}^{k-1}$, $k\in\mathbb{Z}$, and 
$$
\zeta= \left\{
        \begin{array}{ll}
            1 & \quad \text{if $n$ is odd} \\
            -1 & \quad \text{if $n$ is even},
        \end{array}
    \right.
$$
is equal to the probability that $n$ independent trajectories of the Markov chain $X_{1},...,X_{n}$, starting at $a_{1},...,a_{n}$, respectively, will first hit the upper boundary $\partial\Gamma=\mathbb{Z}\times\{N\}$ at any of the $n$ cyclic permutations of the vertices $b_{1},...,b_{n}$, shifted also by all possible horizontal translations by $\mathcal{S}^{k}$, $k\in\mathbb{Z}$, and furthermore the trajectories are constrain to satisfy
\begin{align*}
X_{j}\cap LE(X_{j-1})=\emptyset,\quad 1< j\leq n,\quad\text{and}\quad X_{1}\cap LE(\mathcal{S}X_{n})=\emptyset.
\end{align*} 

\end{remark}

\subsection{A remark on loop-erased paths in a cylinder}
\label{lewcyl}

In this section, we consider families of paths defined in the directed cylindrical lattice $\tilde G$ of Figure \ref{cylind} and review some properties regarding their loop-erasures. As in the previous sections,  the cylindrical lattice need not be acyclic (loops are allowed) and, if the number of paths is odd, we can obtain a variant of Proposition \ref{Fominaff3}  by applying Fomin's identity directly (see Proposition \ref{Fominaff2} below). However, there is a slight difference between these two approaches, since the loop-erasure of a path in $\tilde G$ may differ from the projection of the loop-erasure of the corresponding path in the lattice strip $G$ (see the remark just after Proposition \ref{Fominaff2}).

Define the (directed) {\it cylindrical lattice} (or, just {\it cylinder}) $\tilde{G}=(\tilde{V},\tilde{E})$ as the directed graph with vertex set $\tilde{V}=\mathbb{Z}_{M}\times\{0,1,...,N\}$ and connected by edges in both positive and negative directions. Here, we consider the canonical representation of $\mathbb{Z}_{M}$ as $\mathbb{Z}/M\mathbb{Z}=\{[0],...,[M-1]\}$. Let's distinguish the set of (boundary) vertices $\partial\tilde{G}=\mathbb{Z}_{M}\times\{N\}$. If we consider the lattice strip $G=(V, E, \omega)$ of Theorem \ref{Fominaff} and the notation thereof, there is a natural correspondence between paths in the cylinder $\tilde{G}$ and paths in the strip $G$. In particular, every path $\tilde{P}$ in $\tilde{G}$ starting at $\tilde{a}=([i],0)$ and ending at $\tilde{b}=([j],N)$, for $i,j\in\{0,...,M-1\}$, and with all internal vertices lying in $\tilde{G}\setminus\partial\tilde{G}$, can be seen as the image of any path $P^{\ell}\in G$ of the form
\begin{align*}
P^{\ell}\in\mathcal{H}^{+}(\mathcal{S}^{\ell}a,\mathcal{S}^{\ell+k}b),\quad\ell\in\mathbb{Z},
\end{align*}
with $a=(i,0)\in V$, $b=(j,N)\in V$, and a unique $k\in\mathbb{Z}$. The integer $k$ is usually called the {\it winding number} of the path $\tilde{P}$ (see Figure \ref{cylind}). Since the weight function $\omega$ defined on the strip $G$ is invariant under the translation $\mathcal{S}$ by $(M,0)$, the graph $\tilde{G}$ inherits canonically a weight function $\tilde{\omega}$ on $\tilde{E}$ and the path $\tilde{P}$ inherits the weight $\tilde{\omega}(\tilde{P}):=\omega(P)$, whenever $P\in\mathcal{H}^{+}(a,\mathcal{S}^{k}b)$ is the projection of $\tilde{P}$.

Let $\mathcal{C}^{+}(\tilde{a},\tilde{b})$ be the set of all paths in the cylinder $\tilde{G}$ of positive length, starting at $\tilde{a}\in\tilde{G}$ and ending at $\tilde{b}\in\partial\tilde{G}$, with all internal vertices in $\tilde{G}\setminus\partial\tilde{G}$. Similarly, define $\mathcal{C}^{+}(\tilde{\bf a},\tilde{\bf b})$ for families of paths $\tilde{P}_{1},...,\tilde{P}_{n}$, starting at $\tilde{\bf a}=(\tilde{a}_1,...,\tilde{a}_n)$ and ending at $\tilde{\bf b}=(\tilde{b}_1,...,\tilde{b}_n)$. We have the following:

\begin{proposition}\label{Fominaff2}
Consider integers $0\leq i_{n}<i_{n-1}<...<i_{1}<M$ and $0\leq j_{n}<j_{n-1}<...<j_{1}<M$. Define two $n$-tuples of vertices in the cylinder $\tilde{G}$ as
\begin{align*}
\tilde{a}_{k}:=([i_{k}],0),\quad\quad \tilde{b}_{k}:=([j_{k}],N),\quad 1\leq k\leq n.
\end{align*} 
If $n$ is odd and we consider the $n$ cyclic permutations defined in Proposition \ref{Fominaff3}, we obtain

\begin{align}\label{weighted}
\sum_{\sigma\,\,\rm{cyclic}}\sum_{\substack{\tilde{\bf P}\in\mathcal{C}^{+}(\tilde{\bf a},\tilde{\bf b}_{\sigma})\\ \tilde{P}_{j}\cap LE(\tilde{P}_{i})=\emptyset,\,\,i<j}}\tilde{\omega}(\tilde{\bf P})=\det\left(\sum_{k\in\mathbb{Z}}h(a_{i},\mathcal{S}^{k}b_{j})\right)_{i,j=1}^{n},
\end{align}
where $h(a,b)=\sum_{P\in\mathcal{H}^{+}(a,b)}\omega(P)$ and $\tilde{\bf b}_{\sigma}=(\tilde{b}_{\sigma(1)},...,\tilde{b}_{\sigma(n)})$.

\end{proposition}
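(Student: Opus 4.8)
The plan is to apply Fomin's identity (Theorem~\ref{Fomingen}) directly to the cylindrical lattice $\tilde G=(\tilde V,\tilde E,\tilde\omega)$, taking $\partial\tilde G=\mathbb{Z}_{M}\times\{N\}$ as absorbing boundary, and then to collapse the resulting alternating sum over $S_{n}$ by a planarity argument in the annulus into which $\tilde G$ embeds. Since Theorem~\ref{Fomingen} makes no use of planarity (it needs only a graph with countable vertex set, the edge-multiplicity restriction being inessential), it applies to $\tilde G$ verbatim and yields
\begin{align*}
\sum_{\sigma\in S_{n}}{\rm sgn}(\sigma)\sum_{\substack{\tilde{\bf P}\in\mathcal{C}^{+}(\tilde{\bf a},\tilde{\bf b}_{\sigma})\\ \tilde P_{j}\cap LE(\tilde P_{i})=\emptyset,\,i<j}}\tilde\omega(\tilde{\bf P})=\det(\tilde h(\tilde a_{i},\tilde b_{j}))_{i,j=1}^{n},
\end{align*}
where $\tilde h(\tilde a,\tilde b)=\sum_{\tilde P\in\mathcal{C}^{+}(\tilde a,\tilde b)}\tilde\omega(\tilde P)$.

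First I would rewrite the matrix entries in terms of the strip $G$: by the winding-number correspondence recalled just before the statement, projection gives a weight-preserving bijection between $\mathcal{C}^{+}(\tilde a_{i},\tilde b_{j})$ and $\bigsqcup_{k\in\mathbb{Z}}\mathcal{H}^{+}(a_{i},\mathcal{S}^{k}b_{j})$, so that $\tilde h(\tilde a_{i},\tilde b_{j})=\sum_{k\in\mathbb{Z}}h(a_{i},\mathcal{S}^{k}b_{j})$ and the right-hand side above already equals that of (\ref{weighted}). It then remains to show that, on the left, only the $n$ cyclic permutations $[\ell]$ of Proposition~\ref{Fominaff3} occur, and that each contributes with sign $+1$ when $n$ is odd.

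The sign is immediate: $[\ell]$ is a product of $\gcd(\ell,n)$ disjoint cycles of common length $n/\gcd(\ell,n)$, hence ${\rm sgn}([\ell])=(-1)^{\,n-\gcd(\ell,n)}$, which equals $1$ whenever $n$ is odd (then $\gcd(\ell,n)$ is odd, so $n-\gcd(\ell,n)$ is even). For the vanishing of the remaining terms I would argue as in Corollary~\ref{Fomin}, but in the annulus rather than in a disc. If $\tilde{\bf P}\in\mathcal{C}^{+}(\tilde{\bf a},\tilde{\bf b}_{\sigma})$ satisfies $\tilde P_{j}\cap LE(\tilde P_{i})=\emptyset$ for all $i<j$, then, since $LE(\tilde P_{j})\subseteq\tilde P_{j}$, the loop-erasures $LE(\tilde P_{1}),\dots,LE(\tilde P_{n})$ are pairwise vertex-disjoint self-avoiding paths, i.e.\ disjoint simple arcs joining the inner circle $\mathbb{Z}_{M}\times\{0\}$ to the outer circle $\mathbb{Z}_{M}\times\{N\}$. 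A system of $n$ such disjoint arcs cuts the annulus into $n$ topological discs, and the cyclic order of their endpoints along the inner circle must match the cyclic order of the corresponding endpoints along the outer circle; together with the orderings $i_{n}<\dots<i_{1}$ and $j_{n}<\dots<j_{1}$ fixed in the statement, this forces $\sigma$ to be one of the $n$ rotations $[\ell]$. Consequently the alternating sum over $S_{n}$ collapses to the sum, over the $n$ cyclic $\sigma$, of $\tilde\omega(\tilde{\bf P})$ over all families $\tilde{\bf P}\in\mathcal{C}^{+}(\tilde{\bf a},\tilde{\bf b}_{\sigma})$ with $\tilde P_{j}\cap LE(\tilde P_{i})=\emptyset$ for $i<j$, which is exactly the left-hand side of (\ref{weighted}).

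I expect the delicate point to be the topological claim that the non-intersection condition forces $\sigma$ cyclic: a loop-erasure in $\tilde G$ may revisit the inner circle at vertices other than its start, so the arcs need not be ``radial'', and one should invoke the planar separation argument of \cite{Fomin} underlying Corollary~\ref{Fomin}, using only that the $LE(\tilde P_{i})$ are disjoint simple arcs between the two boundary components of the annulus. It is also worth recording where parity enters: oddness of $n$ is precisely what makes every rotation $[\ell]$ an even permutation in the sign computation above, whereas for $n$ even some rotations are odd, which is why the single-determinant identity of Proposition~\ref{Fominaff3} instead carries the twist $\zeta=-1$ and cannot be derived this way.
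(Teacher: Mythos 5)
Your proof is correct and follows essentially the same route as the paper: apply Fomin's identity (Theorem \ref{Fomingen}) directly to $\tilde G$ with the weight $\tilde\omega$, identify the matrix entries with $\sum_{k\in\mathbb{Z}}h(a_i,\mathcal{S}^k b_j)$ via the winding-number correspondence, and use that cyclic permutations are even when $n$ is odd. You merely spell out two points the paper's proof leaves implicit, namely the annular separation argument forcing $\sigma$ to be a rotation and the sign computation ${\rm sgn}([\ell])=(-1)^{\,n-\gcd(\ell,n)}$.
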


\begin{figure}[tb]
%\blankbox{.6\columnwidth}{5pc}
\includegraphics[scale=0.51]{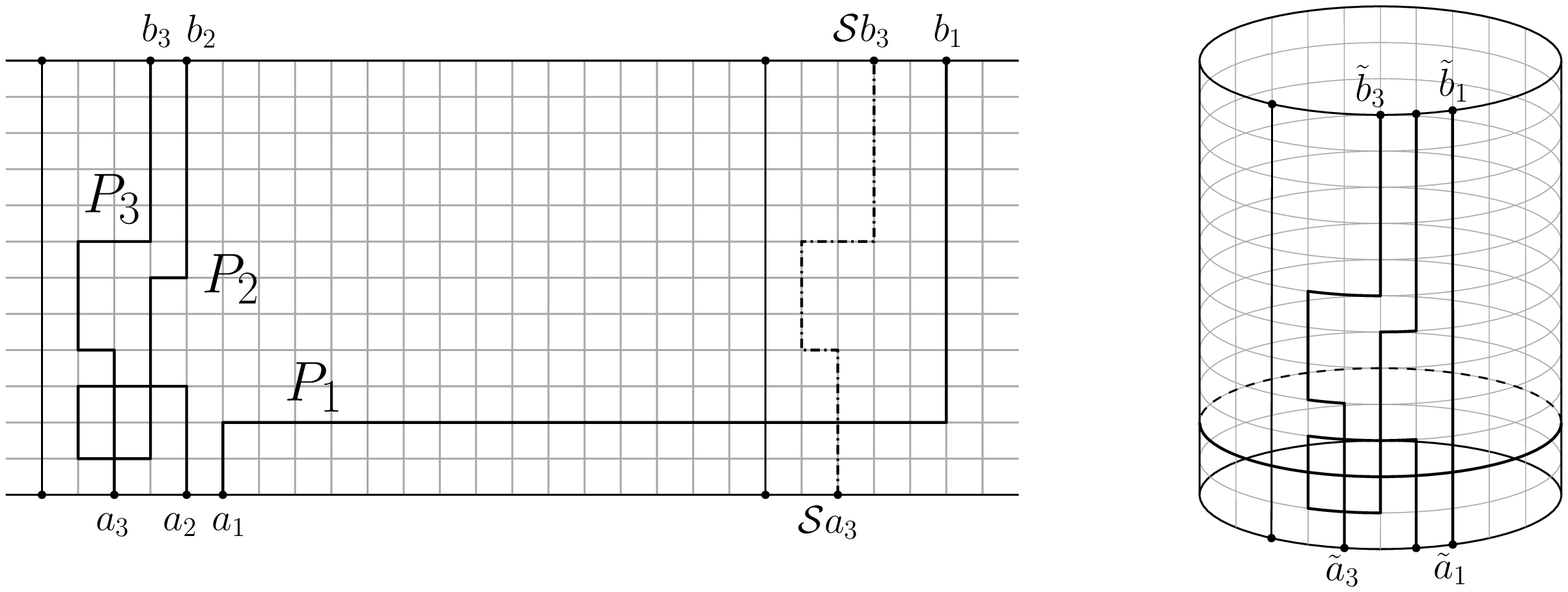}
\caption{Three paths, and their projections onto the cylinder.}
\label{cylind2}
\end{figure}

\begin{proof}
Note that the right hand side of (\ref{weighted}) can be written as the determinant
\begin{align*}
\det(\sum_{\tilde{P}\in\mathcal{C}^{+}(\tilde{a}_{i},\tilde{b}_{j})}\tilde{\omega}(\tilde{P}))_{i,j=1}^{n},
\end{align*}
and, since sgn$(\sigma)$=1 for all $\sigma$ cyclic if $n$ is odd, the equality (\ref{weighted}) is a direct application of Fomin's identity (Theorem \ref{Fomingen}), according to the weight function $\tilde{\omega}$ on $\tilde{G}$. 

\end{proof}

\begin{remark}\label{abc}
The right hand side of (\ref{weighted}) agrees with the right hand side of identity (\ref{weighted2}), for $n$ odd. This implies that the left hand sides of (\ref{weighted}) and (\ref{weighted2}) are equal, which is not immediately obvious from the definitions. For example, we can consider the paths $P_{1}$, $P_{2}$ and $P_{3}$ in the lattice strip of Figure \ref{cylind2}. There, we have that the corresponding projections onto the cylinder satisfy $\tilde{P}_{j}\cap LE(\tilde{P}_{i})=\emptyset$, for $i<j$, and, in particular $\tilde{P}_{3}\cap LE(\tilde{P}_{1})=\emptyset$. However, $P_{1}\cap LE(\mathcal{S}P_{3})\not=\emptyset$, and then $(P_{1},P_{2},P_{3})$ is not considered in the left hand side of (\ref{weighted2}). It would be interesting to have a direct combinatorial proof of this identity.
\end{remark}

\begin{remark}
In the acyclic case, one can obtain determinant formulas for an {\it even} number of non-intersecting walks
on a cylindrical lattice by introducing modified weights which keep track of windings~\cite{Fulmek,Liechty}.
However, in the general case, we do not see how to adopt this approach and the only way we know
how to study the case of an even number of particles is via the affine version of Fomin's identity
introduced in Theorem \ref{Fominaff}.
\end{remark}

\subsection{More general lattices, Gessel-Zeilberger formula}
\label{generallatt}

The  results of this section were formulated for the square lattice but are equally valid for more general periodic planar graphs, for example, the hexagonal lattice shown in Figure \ref{diamond}.

In the acyclic case, Theorem \ref{Fominaff} agrees with the Gessel-Zeilberger formula \cite{Gessel2} (see also \cite{Gessel}). We note that in this context, the identity (\ref{weighted2})
gives a direct connection between the Gessel-Zeilberger formula, for counting paths in alcoves,
and the Karlin-McGregor formula \cite{Karlin} (Lindstr\"{o}m-Gessel-Viennot lemma \cite{Gessel-Viennot}) for counting non-intersecting paths on a cylinder; this answers positively a question of Fulmek \cite{Fulmek}, where the problem of finding 
such a direct connection was posed as an open question.  Moreover, in the continuous case, 
it also shows that the Karlin-McGregor (for $n$ odd) and Liechty-Wang (for $n$ even)
formulas \cite{Karlin,Liechty} for the transition probability density of $n$ (indistinguishable)
non-intersecting Brownian motions on the circle can be obtained directly 
from the (labelled) model of Hobson-Werner \cite{Werner}, which is a continuous version of
the Gessel-Zeilberger formula in the case of the affine symmetric group $\tilde{A}_{n}$ (we review this in Section \ref{bmuc} below).

\begin{figure}[tb]
%\blankbox{.6\columnwidth}{5pc}
\includegraphics[scale=0.4]{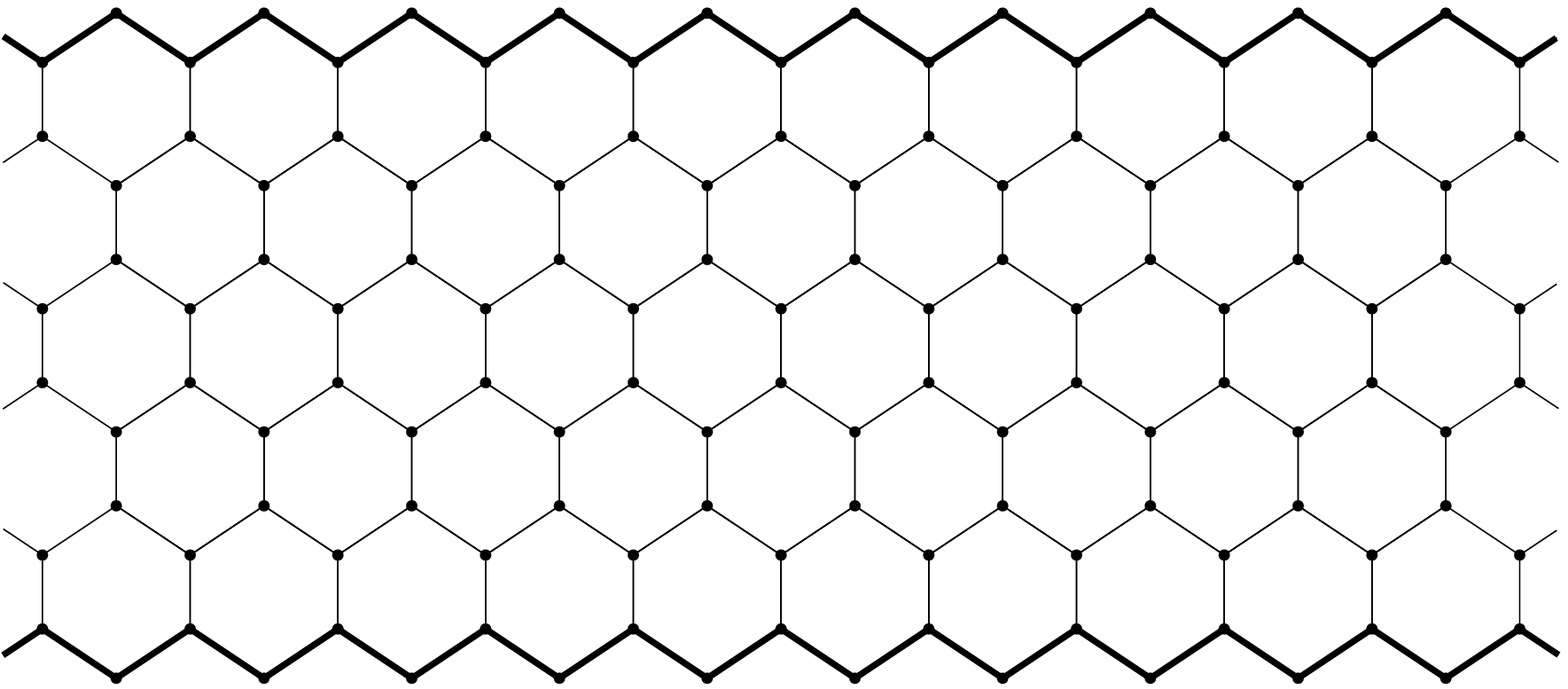}
\caption{Hexagonal lattice}
\label{diamond}
\end{figure}

\section{Connections to random matrix theory}
\label{rmt}

As explained in Section \ref{sclim} of the introduction, there is a natural way to consider diffusion scaling limits of both Fomin's identity (Corollary \ref{Fomin}) and its affine version (Proposition \ref{Fominaff3}). Regarding Fomin's identity, this idea is originally discussed in \cite{Fomin}, where some examples for two-dimensional Brownian
motion are described in detail. For our purposes, the connection with random matrix theory emerges from the following considerations: assume that $\Omega$ is a suitable complex (connected) domain with {\it smooth} boundary and $h(z_{0},y)$ is the (hitting) density of the harmonic measure
$$\mu_{z_{0},\Omega}(A)=\mathbb{P}^{z_{0}}(B_{T}\in A),\quad A\subset\partial\Omega,$$
with respect to one-dimensional Lebesgue measure (lenght), where $B$ under $\mathbb{P}^{z_{0}}$ denotes a two-dimensional Brownian motion starting at $z_{0}\in\Omega$, and $T=\inf\{t>0:B_{t}\notin\Omega\}$ is the first exit time of $\Omega$ (see Section \ref{abrociobm}). Therefore, for $m\in\mathbb{R}$ and appropriately chosen (parametrized) positions $x_{1},...,x_{n}$ and $y_{1},...,y_{n}$ along the boundary $\partial\Omega$, the {\it determinants of hitting densities}
%A rigorous approach of the scaling limit of Fomin's identity as non-intersecting probabilities of SLE$(2)$ paths can be found in \cite{KL,KL2,Lawler-survey}, and references therein (see also Section \ref{nnoepkd}).
\begin{align}\label{Poissonkernel4}
H(x,y)=\det\left(h(x_{i},y_{j})\right)_{i,j=1}^{n}dy_{1}\cdots dy_{n},
\end{align}
and
\begin{align}\label{Poissonkernel24}
H(x,y)=\det\left(\sum_{k\in\mathbb{Z}}\zeta^{k}h(x_{i},y_{j}+mk)\right)_{i,j=1}^{n}dy_{1}\cdots dy_{n},
\end{align}
where  
$$
\zeta= \left\{
        \begin{array}{ll}
            1 & \quad \text{if $n$ is odd} \\
            -1 & \quad \text{if $n$ is even},
        \end{array}
    \right.
$$
can be interpreted, informally, as the probability that $n$ independent `Brownian motions' $B_{i}$, $i=1,...,n$, starting at positions $x_{1},x_{2},...,x_{n}$, respectively, will first hit an absorbing boundary $\partial\Gamma\subset\partial\Omega$ at (parametrized) positions in the intervals $(y_{i}+dy_{i})$, $i=1,...,n$, and whose trajectories are constrained to satisfy
$$
B_{j}\cap LE(B_{i})=\emptyset,\quad\text{for all}\,\,1\leq i<j\leq n,
$$ 
in (\ref{Poissonkernel4}), or
\begin{align*}
B_{j}\cap LE(B_{j-1})=\emptyset,\quad 1< j\leq n,\quad\text{and}\quad B_{1}\cap LE(m+B_{n})=\emptyset,
\end{align*}
in the affine case (\ref{Poissonkernel24}). We remark again that, in the affine case, we assume $\Omega$ to be invariant under a fixed (horizontal) translation by $m\in\mathbb{R}$, and therefore $m+B_{n}$ is the horizontal translation by $m$ of the Brownian path $B_{n}$.

Our main interest is the determination of the behaviour of the
$n$ hitting points $y_{1},...,y_{n}$ along the boundary, when the starting points $x_{1},...,x_{n}$ merge into a single common point in $\partial\Omega$. In other words, for determinants of the form (\ref{Poissonkernel4}), this section considers certain limits
\begin{align*}
\lim_{\substack{(x_{1},...,\,x_{n})\in C\\x_{i}\to x\in\partial\Omega}}\tilde{H}(x,y),\quad (y_{1},...,y_{n})\in C,
\end{align*}
where $\tilde{H}(x,y)$ is an appropriate normalisation of $H(x,y)$ and the positions $x_{1},...,x_{n}$, $y_{1},...,y_{n}$ are determined by chambers $C$ of $\mathbb{R}^{n}$. Determinants of the affine form (\ref{Poissonkernel24}) are considered in Section \ref{circular}.

In Sections \ref{bmpq}, \ref{bmstrip} and \ref{unitdisk}, we revisit the examples considered in \cite{Fomin} (see Figure \ref{dom}). 
We will see that the consideration of the above limits reveals some natural connections to random matrices, particularly Cauchy type ensembles \cite{ForresterWitte}. An example of this connection was first observed by Sato and Katori \cite{Katori}, in the context of {\it excursion Poisson kernel determinants}, and we discuss this in Section \ref{nnoepkd}. Section \ref{circular} considers the affine (circular) case and shows that it is also
related in a natural way to circular ensembles of random matrix theory.

As a warm-up, in Section \ref{an} we recall a well-known connection between non-intersecting {\it one-dimensional}
Brownian motions and the Gaussian Orthogonal Ensemble (GOE) of random matrix theory.

\subsection{A brief review on conformal invariance of Brownian motion}
\label{abrociobm}
The Riemann mapping theorem asserts that any two proper simply connected domains of $\mathbb{C}$ can be conformally mapped into each other. More precisely, if $\Omega\subset\mathbb{C}$ and $\Omega'\subset\mathbb{C}$ are two proper simply connected domains with $z_{0}\in\Omega$ and $z_{0}'\in \Omega'$, then there exists a unique  conformal (analytic with non-vanishing derivative) map $f:\Omega\to \Omega'$ such that $f(z_{0})=z_{0}'$ and $f'(z_{0})>0$. In addition, it is well known that the two-dimensional Brownian motion is invariant under conformal transformations \cite{Durrett}:

\begin{proposition}
If $B$ is a two-dimensional Brownian motion starting at $z_{0}\in \Omega$ and $T=\inf\{t>0: B_{t}\notin \Omega\}$ is the exit time of the domain $\Omega$, then there exists a (random) time change $\sigma:[0,T']\to[0,T]$ such that the process
\begin{align*}
(f(B_{\sigma(t)}),0\leq t<T')
\end{align*}
is again a two-dimensional Brownian motion, starting at $f(z_{0})\in \Omega'$ and stopped at its first exist $T'$ of $\Omega'$. 
\end{proposition}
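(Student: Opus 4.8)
The plan is to exploit the analyticity of $f$ together with L\'evy's characterisation of Brownian motion, in the form of the Dambis--Dubins--Schwarz time-change theorem. Write $f = u + iv$ on $\Omega$, where $u$ and $v$ are harmonic conjugates satisfying the Cauchy--Riemann equations. Since $B$ is a two-dimensional Brownian motion, It\^o's formula applied to the harmonic functions $u$ and $v$ shows that, up to the exit time $T$, the real processes $u(B_t)$ and $v(B_t)$ are continuous local martingales, because the second-order terms vanish ($\Delta u = \Delta v = 0$). A direct computation of the bracket processes, using the Cauchy--Riemann equations, gives
\begin{align*}
\de\langle u(B)\rangle_t = \de\langle v(B)\rangle_t = |f'(B_t)|^2\,\de t, \qquad \de\langle u(B),v(B)\rangle_t = 0,
\end{align*}
since $|\nabla u|^2 = |\nabla v|^2 = |f'|^2$ and $\nabla u\cdot\nabla v = 0$. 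Hence $M_t := f(B_t)$ is, on $[0,T)$, a conformal continuous local martingale with clock $A_t := \int_0^t |f'(B_s)|^2\,\de s$.

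Next I would introduce the time change. Because $f$ is conformal, $f'\neq 0$ on $\Omega$, so $t\mapsto A_t$ is strictly increasing and continuous on $[0,T)$; set $T' := \lim_{t\uparrow T} A_t \in (0,\infty]$ and let $\sigma := A^{-1}\colon [0,T') \to [0,T)$ be its continuous, strictly increasing inverse. Applying the Dambis--Dubins--Schwarz theorem to the two orthogonal martingales $u(B)$ and $v(B)$ (equivalently, the complex-martingale version) yields that $W_t := M_{\sigma(t)} = f(B_{\sigma(t)})$ is, on $[0,T')$, a continuous local martingale with $\langle\Re W\rangle_t = \langle\Im W\rangle_t = t$ and $\langle\Re W,\Im W\rangle_t = 0$; since we only run it up to $T'$ no enlargement of the probability space is needed, and by L\'evy's characterisation $W$ is a standard two-dimensional Brownian motion started at $W_0 = f(z_0)\in\Omega'$.

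It remains to identify $T'$ with the first exit time of $\Omega'$ by $W$. For $t<T'$ we have $\sigma(t)<T$, so $B_{\sigma(t)}\in\Omega$ and hence $W_t = f(B_{\sigma(t)})\in\Omega'$; thus $W$ does not leave $\Omega'$ before time $T'$. Conversely, as $t\uparrow T'$ we have $\sigma(t)\uparrow T$, and since $B$ exits $\Omega$ at $T$ the path $B_{\sigma(t)}$ approaches $\partial\Omega$, so by the properness of the homeomorphism $f\colon\Omega\to\Omega'$ the image $W_t$ approaches $\partial\Omega'$, whence $W$ exits $\Omega'$ exactly at $T'$. I expect the main technical point to be precisely this last step --- controlling the boundary behaviour of $f$ and ruling out that $W$ stays bounded away from $\partial\Omega'$ as $t\uparrow T'$ --- which is handled by the properness of the conformal map together with the fact that two-dimensional Brownian motion does not linger at isolated boundary points; all of the martingale bookkeeping preceding it is routine. (Alternatively, one may simply invoke the standard reference~\cite{Durrett}.)
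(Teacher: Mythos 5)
The paper does not actually prove this proposition --- it is quoted as standard background, with the proof deferred to the cited reference of Durrett --- and your argument is precisely that standard proof: It\^o's formula together with the Cauchy--Riemann equations shows $f(B_t)$ is a conformal local martingale with clock $A_t=\int_0^t|f'(B_s)|^2\,ds$, the Dambis--Dubins--Schwarz time change $\sigma=A^{-1}$ and L\'evy's characterisation yield the Brownian motion started at $f(z_0)$, and properness of the conformal homeomorphism $f:\Omega\to\Omega'$ identifies $T'=A_{T-}$ with the exit time of $\Omega'$. Your write-up is correct, including the one genuinely delicate point (the boundary identification as $t\uparrow T'$), so there is nothing to add beyond noting that this matches the canonical argument the paper implicitly relies on.
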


\begin{figure}[tb]
%\blankbox{.6\columnwidth}{5pc}
\includegraphics[scale=0.64]{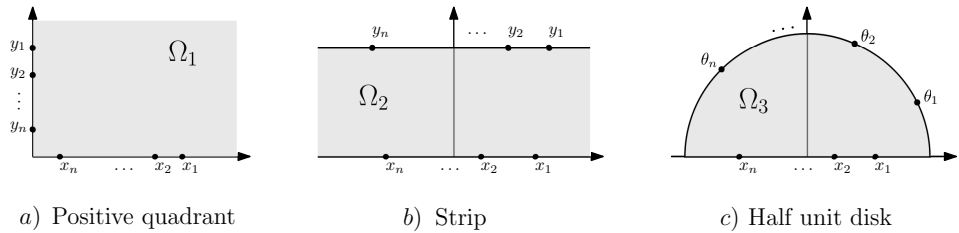}
\caption{Simply connected domains in the complex plane $\mathbb{C}$.}
\label{dom}
\end{figure}

These properties ensure that, under mild conditions on $\partial\Omega$ (for example, if $\partial\Omega$ is determined by a Jordan curve; see also Section 2.3 of \cite{Lawler2} for more general conditions), we have that for all $A\subset\partial\Omega$
\begin{align}\label{quetz}
\mathbb{P}^{z_{0}}(B_{T}\in A)=\mathbb{P}^{f(z_{0})}(f(B_{T})\in f(A))=\mathbb{P}^{z_{0}'}(B'_{T'}\in f(A)),
\end{align}
where $B'$ is another two-dimensional Brownian motion. If we set $\mu_{z_{0}, \Omega}(A)=\mathbb{P}^{z_{0}}(B_{T}\in A)$, $A\subset\partial\Omega,$
then $\mu_{z_{0}, \Omega}$ defines a measure on $\partial\Omega$, which is called the {\it harmonic measure} or {\it hitting measure} on $\partial\Omega$. Therefore, identity (\ref{quetz}) becomes 
\begin{align}\label{book}
\mu_{z_{0}, \Omega}(A)=\mu_{z_{0}', \Omega'}(f(A)),\quad \text{for all}\,\, A\subset\partial\Omega.
\end{align}
If both measures are absolutely continuous with respect to one-dimensional Lebesgue measure, or {\it lenght} (which is the case in all the examples considered in this paper), then, from (\ref{book}) we obtain (see also \cite{Lawler2,KL}):

\begin{proposition}
\label{ccphd}
Let $\Omega$ and $\Omega'$ be two simply connected domains with $z_{0}\in \Omega$. Let $f:\Omega\to \Omega'$ be a conformal map and  set $z'_{0}=f(z_{0})$. Assume that we can define harmonic measures $\mu_{z_{0}, \Omega}$ and $\mu_{z_{0}',\Omega'}$ and both are absolutely continuous with respect to one-dimensional Lebesgue measure (lenght), therefore
\begin{align}\label{Poissonker}
h_{\Omega}(z_{0}, y)=|f'(y)|h_{\Omega'}(f(z_{0}),f(y)),
\end{align}
where $h_{\Omega}(z_{0},\cdot)$ and $h_{\Omega'}(z_{0}',\cdot)$ are the corresponding densities of $\mu_{z_{0}, \Omega}$ and $\mu_{z_{0}',\Omega'}$, respectively.
\end{proposition}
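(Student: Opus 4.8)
The plan is to derive the transformation rule for hitting densities as an immediate consequence of the conformal invariance of harmonic measure, equation (\ref{book}), combined with the change-of-variables formula for densities. First I would fix a small arc $A\subset\partial\Omega$ and write $\mu_{z_0,\Omega}(A)=\int_A h_\Omega(z_0,y)\,d\ell(y)$, where $d\ell$ is arclength measure; similarly $\mu_{z_0',\Omega'}(f(A))=\int_{f(A)} h_{\Omega'}(z_0',y')\,d\ell(y')$. Since $f$ is conformal and $\partial\Omega$ is a smooth (Jordan) curve, $f$ restricts to a $C^1$ diffeomorphism from $\partial\Omega$ onto $\partial\Omega'$, and the arclength element transforms by the Jacobian $|f'(y)|$, i.e. $d\ell(f(y))=|f'(y)|\,d\ell(y)$ — this is where conformality (analyticity with nonvanishing derivative, so that the derivative acts as a scalar rotation-dilation on tangent vectors) is used in an essential way. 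Substituting $y'=f(y)$ in the second integral gives
\begin{align*}
\mu_{z_0',\Omega'}(f(A))=\int_A h_{\Omega'}(f(z_0),f(y))\,|f'(y)|\,d\ell(y).
\end{align*}

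Next I would invoke (\ref{book}), which asserts $\mu_{z_0,\Omega}(A)=\mu_{z_0',\Omega'}(f(A))$ for every (Borel) $A\subset\partial\Omega$. Equating the two integral expressions yields
\begin{align*}
\int_A h_\Omega(z_0,y)\,d\ell(y)=\int_A h_{\Omega'}(f(z_0),f(y))\,|f'(y)|\,d\ell(y)
\end{align*}
for all $A$. Since both integrands are (assumed) densities with respect to $d\ell$, a standard measure-theoretic argument (two finite measures on $\partial\Omega$ with equal values on all Borel sets have a.e.-equal Radon–Nikodym derivatives) forces
\begin{align*}
h_\Omega(z_0,y)=|f'(y)|\,h_{\Omega'}(f(z_0),f(y))\qquad\text{for ($\ell$-a.e.) }y\in\partial\Omega,
\end{align*}
which is exactly (\ref{Poissonker}); if one wants the identity everywhere one notes both sides are continuous on the smooth boundary under the stated regularity.

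The routine steps are the measure-theoretic uniqueness of densities and the substitution; the one place requiring a little care — and the main obstacle — is justifying that $f$ extends to a $C^1$ (indeed conformal) map of the boundary curves and that arclength pulls back by $|f'|$. This is where the hypothesis that $\partial\Omega$ is nice (a Jordan curve, or the more general conditions of Section 2.3 of \cite{Lawler2}) enters: Carathéodory's theorem gives a homeomorphic extension of $f$ to the closures, and smoothness of the boundary upgrades this to a $C^1$ diffeomorphism so that the parametrized boundary curve $\gamma(t)$ maps to $f(\gamma(t))$ with speed $|\gamma'(t)|\,|f'(\gamma(t))|$. Granting that, the rest is bookkeeping. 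I would also remark that one only ever needs (\ref{Poissonker}) in the explicit examples to follow, where $\Omega,\Omega'$ are disks, half-planes and strips with real-analytic boundaries, so the boundary regularity is automatic and no delicate boundary-extension theory is actually invoked.
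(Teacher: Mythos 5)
Your argument is correct and is precisely the route the paper takes: the paper derives Proposition \ref{ccphd} directly from the conformal invariance of harmonic measure (\ref{book}), leaving the change-of-variables and density-uniqueness bookkeeping implicit, which is exactly what you spell out (including the boundary-regularity caveat handled via the smooth/Jordan boundary assumption). No gaps; your write-up just makes explicit the steps the paper summarizes with ``from (\ref{book}) we obtain''.
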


\begin{definition}\label{definitionkerles}
When the harmonic measure $\mu_{z_{0}, \Omega}$ has a density $h_{\Omega}(z_{0},y)$ with respect to one-dimensional Lebesgue measure (lenght), we call this density the hitting density or Poisson kernel of $\Omega$. 
\end{definition}

We often drop the suffix $\Omega$ in the definition above and simply write $h=h_{\Omega}$. In practice, the explicit computation of the harmonic measure (or its density) for an arbitrary simply connected domain $\Omega$ is not an easy task, but there are some examples where this computation can be easily performed. In Sections \ref{bmpq}, \ref{bmstrip} and \ref{unitdisk} we consider the positive quadrant $\Omega_{1}$, the infinite strip $\Omega_{2}$ and the upper half-circle $\Omega_{3}$, respectively (see Figure \ref{dom}):
\begin{align*}
\Omega_{1}&=\{z\in\mathbb{C}: \mathfrak{Re}(z)>0, \mathfrak{Im}(z)>0\},\\
\Omega_{2}&=\{z\in\mathbb{C}: 0<\mathfrak{Im}(z)< t\},\quad t>0,\\
\Omega_{3}&=\{z\in\mathbb{C}: |z|<1, \mathfrak{Im}(z)>0\}.
\end{align*}

\subsection{Non-intersecting Brownian motions and the GOE}
\label{an}
Consider a system of $n$ independent one-dimensional Brownian motions conditioned not to intersect up to a fixed time $t>0$, starting at positions $x_{n}<x_{n-1}<...<x_{1}$, respectively. This is the $n$-dimensional Brownian motion starting at $x=(x_{1},...,x_{n})\in\mathbb{R}^{n}$ and conditioned to stay in the chamber $C=\{y\in\mathbb{R}^{n}: y_{n}<y_{n-1}<...<y_{1}\}$ up to time $t>0$. Since the $n$-dimensional Brownian motion is a strong Markov process with continuous paths, the Karlin-McGregor formula \cite{Karlin} gives the (unnormalised) density of the positions of the process at time $t$:
\begin{align}\label{transition}
\hat p_{t}(x,y)=\det\left[p_{t}(x_{i},y_{j})\right]_{i,j=1}^{n},\quad x,y\in C, 
\end{align}
where $$p_{t}(x,y)=\frac{1}{\sqrt{2\pi t}}e^{-\frac{(x-y)^{2}}{2t}}.$$   
Let $M_{t,x}$ be the normalisation constant for (\ref{transition}), that is,
$$M_{t,x}=\int_{C}\hat p_{t}(x,y)dy.$$ 
We have the following:

\begin{proposition}
The positions at time $t>0$ of $n$ independent one-dimensional Brownian motions, started at the origin, and conditioned not to intersect up to time $t>0$, are given by
\begin{align*}
\lim_{\substack{x\in C\\x\to0}}\frac{1}{M_{t,x}}\hat p_{t}(x,y)=\frac{1}{M'_{t}}e^{-\frac{1}{2t}\sum_{i=1}^{n}y_{i}^{2}}\prod_{1\leq i<j\leq n}(y_{j}-y_{i}),
\end{align*}
where $M_{t}'$ is the corresponding
normalisation constant.
\end{proposition}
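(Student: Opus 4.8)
The plan is to evaluate the small-start limit of the normalised Karlin-McGregor density by extracting the leading-order behaviour of the determinant $\hat p_t(x,y)$ as $x \to 0$ inside the chamber $C$. First I would factor out of each column the common Gaussian weight: writing $p_t(x_i,y_j) = (2\pi t)^{-1/2} e^{-x_i^2/2t} e^{-y_j^2/2t} e^{x_i y_j/t}$, the factors $e^{-x_i^2/2t}$ and $e^{-y_j^2/2t}$ pull out of the $i$-th row and $j$-th column respectively, so that
\begin{align*}
\hat p_t(x,y) = (2\pi t)^{-n/2}\, e^{-\frac{1}{2t}\sum_i x_i^2}\, e^{-\frac{1}{2t}\sum_j y_j^2}\, \det\!\left(e^{x_i y_j/t}\right)_{i,j=1}^n.
\end{align*}
The remaining determinant $\det(e^{x_i y_j/t})$ is a confluent-type (generalised Vandermonde) determinant in the variables $x_i/t$ and $y_j$.

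Next I would analyse $\det(e^{x_i y_j/t})$ as $x \to 0$. Expanding $e^{x_i y_j/t} = \sum_{m\ge 0} (x_i/t)^m y_j^m / m!$ and using multilinearity of the determinant in the rows, the leading contribution as all $x_i \to 0$ comes from assigning distinct powers $m = 0,1,\dots,n-1$ to the rows (any repeated power makes the determinant vanish). A standard computation — essentially the evaluation of a Vandermonde matrix together with the Schur-function/Vandermonde expansion — gives
\begin{align*}
\det\!\left(e^{x_i y_j/t}\right)_{i,j=1}^n = \frac{1}{t^{\binom{n}{2}}}\,\frac{\prod_{i<j}(x_i - x_j)\,\prod_{i<j}(y_j - y_i)}{\prod_{k=0}^{n-1} k!} \;\bigl(1 + o(1)\bigr)
\end{align*}
as $x \to 0$ in $C$, where the Vandermonde $\prod_{i<j}(x_i - x_j)$ is exactly the $x$-dependence picked up from the leading term (here I use $x_i > x_j$ for $i<j$, matching the ordering of $C$, so this factor is positive). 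The sign conventions on the $y$-Vandermonde follow from the chosen labelling.

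Finally I would combine the pieces. As $x \to 0$, the prefactors $e^{-\frac{1}{2t}\sum x_i^2} \to 1$, so
\begin{align*}
\hat p_t(x,y) = (2\pi t)^{-n/2}\,\frac{\prod_{i<j}(x_i-x_j)}{t^{\binom{n}{2}}\prod_{k=0}^{n-1}k!}\; e^{-\frac{1}{2t}\sum_j y_j^2}\,\prod_{i<j}(y_j-y_i)\,\bigl(1+o(1)\bigr).
\end{align*}
Dividing by $M_{t,x} = \int_C \hat p_t(x,y)\,dy$, the entire $x$-dependent factor $(2\pi t)^{-n/2}\prod_{i<j}(x_i-x_j)\,t^{-\binom{n}{2}}(\prod k!)^{-1}$ cancels between numerator and denominator, leaving exactly
\begin{align*}
\lim_{\substack{x\in C\\ x\to 0}} \frac{1}{M_{t,x}}\hat p_t(x,y) = \frac{1}{M_t'}\, e^{-\frac{1}{2t}\sum_{i=1}^n y_i^2}\prod_{1\le i<j\le n}(y_j - y_i),
\end{align*}
with $M_t' = \int_C e^{-\frac{1}{2t}\sum y_i^2}\prod_{i<j}(y_j-y_i)\,dy$ the new normalisation constant. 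To make the passage to the limit rigorous one should justify interchanging the limit with the integral defining $M_{t,x}$; this follows from dominated convergence after noting that $\hat p_t(x,y)/\prod_{i<j}(x_i-x_j)$ is bounded uniformly for $x$ near $0$ by an integrable Gaussian-type function of $y$ on $C$ — this uniform domination is the only genuinely technical point, the rest being the algebraic Vandermonde computation. The final expression is (up to the change of variables $y_i \mapsto$ eigenvalues) the GOE eigenvalue density with parameter $t$, as claimed in the surrounding discussion.
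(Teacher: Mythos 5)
Your proposal is correct and takes essentially the same route as the paper: factor the Gaussian weights out of the Karlin--McGregor determinant and compute the $x\to 0$ limit of $\det\left(e^{x_i y_j/t}\right)$ after normalising by the Vandermonde in $x$ — the paper does this step by invoking its Appendix Lemma (a finite-difference/derivative-determinant limit), while you expand the exponentials directly to extract the leading Vandermonde term, which is an equivalent computation. The only blemish is a harmless sign-convention slip (the leading coefficient is $\prod_{i<j}(x_j-x_i)$, not $\prod_{i<j}(x_i-x_j)$), which cancels in the ratio with $M_{t,x}$ exactly as you observe; your remark that the interchange of limit and integral needs a dominated-convergence bound is a point the paper leaves implicit.
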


\begin{remark}
The above expression agrees with the joint density of the eigenvalues of an $n\times n$ GOE random matrix with variance parameter $t$ \cite{Mehta,Forrester}.

\end{remark}

\begin{proof}
A simple calculation shows that
\begin{align*}
\frac{1}{M_{t,x}}\hat p_{t}(x,y)&=\frac{1}{M_{t,x}}\frac{1}{(2\pi t)^{n/2}}e^{-\frac{1}{2t}\sum_{i=1}^{n}(x_{i}^{2}+y_{i}^{2})}\det\left(e^{\frac{1}{t}x_{i}y_{j}}\right)_{i,j=1}^{n},
\end{align*}
and, dividing both numerator and denominator by the Vandermonde determinant $$\Delta(x)=\prod_{1\leq i<j\leq n}(x_{j}-x_{i}),$$
we can use Lemma \ref{lemma} to compute the limit in the proposition as follows:
\begin{align*}
\lim_{\substack{x\in C\\x\to0}}\frac{1}{M_{t,x}}\hat p_{t}(x,y)&=\frac{1}{M'_{t}}t^{\frac{(n-1)n}{2}}e^{-\frac{1}{2t}\sum_{i=1}^{n}y_{i}^{2}}\det\left(\left(\frac{y_{j}}{t}\right)^{i-1}\right)_{i,j=1}^{n}\\
&=\frac{1}{M'_{t}}e^{-\frac{1}{2t}\sum_{i=1}^{n}y_{i}^{2}}\det\left(y_{j}^{i-1}\right)_{i,j=1}^{n}\\
&=\frac{1}{M'_{t}}e^{-\frac{1}{2t}\sum_{i=1}^{n}y_{i}^{2}}\prod_{1\leq i<j\leq n}(y_{j}-y_{i}),
\end{align*}
where $$M'_{t}=\int_{C}e^{-\frac{1}{2t}\sum_{i=1}^{n}y_{i}^{2}}\prod_{1\leq i<j\leq n}(y_{j}-y_{i})dy.$$ 

\end{proof}

\subsection{Brownian motion in the positive quadrant} 
\label{bmpq}
Let's identify the two-dimensional Euclidean space $\mathbb{R}^{2}$ with the complex plane $\mathbb{C}$. The positive quadrant is the simply connected domain given by
\begin{align*}
\Omega_{1}&=\{z\in\mathbb{C}: \mathfrak{Re}(z)>0, \mathfrak{Im}(z)>0\}.
\end{align*}
For the two dimensional Brownian motion $B$, starting at a point $x$ in the positive $x$-axis,  the density of the first hitting point $y=iy$, $y\in\mathbb{R}$ in the $y$-axis, is given by the Cauchy density (see \cite{Durrett}, section 1.9): 
\begin{align*}
h'(x,y)=\frac{1}{\pi}\frac{x}{x^{2}+y^{2}},\quad y\in\mathbb{R}.
\end{align*}
Therefore, we can consider the two-dimensional `Brownian motion' $B'$ in the positive quadrant $\overline{\Omega}_{1}$ starting at $x$, with {\it normal reflection}
on the positive $x$-axis, and the positive $y$-axis acting as {\it absorbing boundary}. The process $B'$ first hits the positive $y$-axis at a point $y=iy$, $y>0$, with density
\begin{align}\label{pkq}
h(x,y)=h'(x,y)+h'(x,-y)=\frac{2}{\pi}\frac{x}{x^{2}+y^{2}},\quad y>0.
\end{align}
Consider the determinant of hitting densities
$$H(x,y)=\det\left(h(x_{i},y_{j})\right)_{i,j=1}^{n},\qquad x,y\in D,$$ 
where
$$D=\{x\in\mathbb{R}^{n}: 0<x_{n}<x_{n-1}<...<x_{1}\}.$$

\begin{proposition}\label{bmpqpp}
For any $y\in D$, and $t>0$,
\begin{align*}
\lim_{\substack{x\in D\\x\to t}}\tilde H(x,y)&=\frac{1}{M_{t}}\prod_{1\leq j\leq n}(t^{2}+y_{j}^{2})^{-n}\prod_{1\leq i<j\leq n}(y_{i}^{2}-y_{j}^{2}),
\end{align*}
where 
$$\tilde{H}(x,y)=\left(\int_{D}H(x,y)dy\right)^{-1}H(x,y),$$
and $M_t$ is the corresponding normalisation constant.
\end{proposition}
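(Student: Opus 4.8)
The plan is to evaluate the determinant $H(x,y)$ in closed form using Cauchy's determinant identity. This separates the $x$-dependence from the $y$-dependence except through the coupling factor $\prod_{i,j}(x_i^2+y_j^2)$, and it isolates a purely $x$-dependent prefactor carrying the degeneracy $\prod_{i<j}(x_i^2-x_j^2)\to 0$ that produces the indeterminate form; that prefactor then cancels against the normalising integral, after which the limit follows from a routine dominated-convergence argument.

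First I would write $h(x_i,y_j)=\tfrac{2}{\pi}\,x_i\cdot\tfrac{1}{x_i^2+y_j^2}$ and factor $x_i$ out of the $i$-th row, so that $H(x,y)=(\tfrac{2}{\pi})^{n}\big(\prod_i x_i\big)\det\big(\tfrac{1}{x_i^2+y_j^2}\big)_{i,j=1}^{n}$. Cauchy's identity, applied with $a_i=x_i^2$ and $b_j=y_j^2$, gives
\[
\det\Big(\tfrac{1}{x_i^2+y_j^2}\Big)_{i,j=1}^{n}=\frac{\prod_{1\le i<j\le n}(x_i^2-x_j^2)(y_i^2-y_j^2)}{\prod_{i,j=1}^{n}(x_i^2+y_j^2)},
\]
so, setting $\Phi(x):=(\tfrac{2}{\pi})^{n}\big(\prod_i x_i\big)\prod_{i<j}(x_i^2-x_j^2)>0$ on $D$ and $\Delta_2(y):=\prod_{i<j}(y_i^2-y_j^2)>0$ on $D$, one has the factorisation $H(x,y)=\Phi(x)\,\Delta_2(y)\big/\prod_{i,j}(x_i^2+y_j^2)$. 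In particular $H\ge 0$ on $D\times D$, consistently with Corollary \ref{Fomin}.

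Next, since $\Phi(x)$ is independent of the integration variable, $\int_D H(x,y')\,dy'=\Phi(x)\int_D \Delta_2(y')\big/\prod_{i,j}(x_i^2+y_j'^2)\,dy'$; this integral is finite and positive for each $x\in D$ (bound $\prod_{i,j}(x_i^2+y_j'^2)\ge\prod_j(x_n^2+y_j'^2)^{n}$ and note that $\Delta_2$ has degree $2(n-1)$ in each variable against degree $2n$ in the denominator, so the integrand decays like $(y_j')^{-2}$ in each variable and has no other singularity). Hence $\tilde H(x,y)$ is well defined and $\Phi(x)$ cancels, leaving
\[
\tilde H(x,y)=\frac{\Delta_2(y)\big/\prod_{i,j}(x_i^2+y_j^2)}{\int_D \Delta_2(y')\big/\prod_{i,j}(x_i^2+y_j'^2)\,dy'}.
\]
Letting $x\to t$ with $x\in D$, the numerator tends to $\Delta_2(y)\big/\prod_j(t^2+y_j^2)^{n}$ by continuity; for the denominator, on the neighbourhood $\{x\in D: x_i\ge t/2\ \forall i\}$ the integrands are dominated by $\Delta_2(y')\big/\prod_j(t^2/4+y_j'^2)^{n}$, which is integrable on $D$ by the same degree count, so dominated convergence gives convergence of the integral to $M_t:=\int_D \Delta_2(y')\big/\prod_j(t^2+y_j'^2)^{n}\,dy'\in(0,\infty)$. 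Combining, $\lim_{x\to t}\tilde H(x,y)=\tfrac{1}{M_t}\prod_{j}(t^2+y_j^2)^{-n}\prod_{i<j}(y_i^2-y_j^2)$, which is the asserted density with $M_t$ its normalising constant.

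I do not expect any serious obstacle: the only point needing a little care is the dominated-convergence step, which rests on the elementary fact that $M_t<\infty$ (the degree count for the decay of the limiting integrand). What makes this computation far shorter than the GOE limit of Section \ref{an} — which requires the determinant manipulation of Lemma \ref{lemma} — is that Cauchy's identity evaluates the determinant exactly and exposes the degenerate factor $\Phi(x)$, which is then simply absorbed into the normalisation.
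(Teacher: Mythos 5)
Your proposal is correct and follows essentially the same route as the paper's proof: evaluate $H(x,y)$ via the Cauchy determinant, cancel the purely $x$-dependent prefactor in the normalised density, and pass to the limit by dominated convergence with a dominating function of the form $\prod_{j}(c^{2}+y_{j}^{2})^{-n}\prod_{i<j}(y_{i}^{2}-y_{j}^{2})$. The only cosmetic difference is how integrability of the dominating function is checked — you use a direct decay/degree estimate, while the paper maps the integral to the bounded angular domain of the half-disk via $e^{i\theta}=\frac{iy-\sqrt{T}}{iy+\sqrt{T}}$ — and both verifications are valid.
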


\begin{remark}
In particular, when $t=1$, the above density takes the form
 \begin{align*}
 \frac{1}{M_1}\prod_{1\leq j\leq n}(1+y_{j}^{2})^{-n}\prod_{1\leq i<j\leq n}(y_{j}^{2}-y_{i}^{2}),
 \end{align*}
 which is a Cauchy-type ensemble on the positive half-line \cite{Forrester,ForresterWitte}.
\end{remark}

\begin{proof}
For all $x,y\in D$, the function $H(x,y)$ is positive  and a Cauchy determinant (see \cite{Karlin2}). Therefore
\begin{align*}
H(x,y)=\left(\frac{2}{\pi}\right)^{n}\prod_{i=1}^{n}x_{i}\prod_{1\leq i,j\leq n}(x_{i}^{2}+y_{j}^{2})^{-1}\prod_{1\leq i<j\leq n}(x_{i}^{2}-x_{j}^{2})(y_{i}^{2}-y_{j}^{2}).
\end{align*}
Regarded as a probability density on $D$, we can consider the normalised density
\begin{align}\label{pillow}
\tilde H(x,y)=\left(\int_{D}F_{x}(y)dy\right)^{-1}F_{x}(y),
\end{align}
where $$F_{x}(y)=\prod_{1\leq i,j\leq n}(x_{i}^{2}+y_{j}^{2})^{-1}\prod_{1\leq i<j\leq n}(y_{i}^{2}-y_{j}^{2}).$$ 
Fix a real number $t>0$ and consider $\varepsilon$ such that  $0<\varepsilon<t$. We have that for all $x\in D$ such that $|x_{i}-t|<\varepsilon$, $1\leq i\leq n$, and for all $y\in D$
\begin{align*}
|F_{x}(y)|\leq\prod_{1\leq j\leq n}(T^{2}+y_{j}^{2})^{-n}\prod_{1\leq i<j\leq n}(y_{i}^{2}-y_{j}^{2}),\quad T=t-\varepsilon>0.
\end{align*}
The function on the right hand side is integrable over $D$, which can be verified by using the relation
\begin{align}\label{mapp}
e^{i\theta}=\frac{iy-\sqrt{T}}{iy+\sqrt{T}},\quad 0<\theta<\pi,
\end{align}
so that
$$\prod_{1\leq j\leq n}(T^{2}+y_{j}^{2})^{-n}\prod_{1\leq i<j\leq n}(y_{i}^{2}-y_{j}^{2})dy\propto\prod_{1\leq i<j\leq n}|e^{i\theta_{i}}-e^{i\theta_{j}}||e^{i\theta_{i}}-e^{-i\theta_{j}}|d\theta,$$
and the latter function is integrable over the bounded domain (see Section \ref{unitdisk})
$$\{\theta\in\mathbb{R}^{n}:0<\theta_{1}<\theta_{2}<...<\theta_{n}<\pi\}.$$
Therefore, by the dominated convergence theorem, when the $n$ starting points $x_{1},x_{2},...,x_{n}$ approach to the common point $t$, along the $x$-half positive axis, the limit in the proposition can be easily computed from the expression (\ref{pillow}), as
\begin{align*}
\lim_{\substack{x\in D\\x\to t}}\tilde H(x,y)&=\frac{1}{M_{t}}\prod_{1\leq j\leq n}(t^{2}+y_{j}^{2})^{-n}\prod_{1\leq i<j\leq n}(y_{i}^{2}-y_{j}^{2}),
\end{align*}
where $M_t$ is the corresponding normalisation constant. 

\end{proof}

\subsection{Brownian motion in a strip}
\label{bmstrip}
Consider the infinite strip given by
\begin{align*}
\Omega_{2}=\{z\in\mathbb{C}: 0<\mathfrak{Im}(z)< t\}.
\end{align*}
By conformal invariance of the two-dimensional Brownian motion (the function $f(z)=e^{\pi z/2t}$ maps $\Omega_{2}$ onto the positive quadrant $\Omega_{1}$), we can also consider a `Brownian motion' constrained to live in the strip $\overline{\Omega}_{2}$, starting at a point $x\in\mathbb{R}$ on the $x$-axis (which is normal reflecting) and stopped once it hits the (absorbing) boundary line $\mathfrak{Im}(z)=t$. If the process starts at a point $x\in\mathbb{R}$, and it first hits the absorbing boundary at $y:=y+it$, $y\in\mathbb{R}$, then by conformal invariance (that is, using Proposition \ref{ccphd} and formula (\ref{pkq}) above) we obtain a formula for the hitting density $h_{t}(x,y)$ of $\Omega_{2}$:
\begin{align*}
h_{t}(x,y)&=\frac{\pi}{2t}|ie^{\pi y/2t}|h(e^{\pi x/2t},e^{\pi y/2t})=\frac{1}{t}\frac{1}{e^{\pi(y-x)/2t}+e^{-\pi(y-x)/2t}}.
\end{align*}
In terms of hyperbolic functions, the above expression can be written as
\begin{align}\label{limite}
h_{t}(x,y)=\frac{1}{2t}{\rm sech}\left(\frac{\pi}{2t}(y-x)\right),\quad y\in\mathbb{R}.
\end{align}
Define the determinant of hitting densities
\begin{align}
H_{t}(x,y)=\det\left(h_{t}(x_{i},y_{j})\right)_{i,j=1}^{n},\quad x,y\in C,
\end{align} 
where 
$$C=\{x\in\mathbb{R}^{n}: x_{n}<x_{n-1}<...<x_{1}\}.$$ 

\begin{proposition}
For any $y\in C$, and $t>0$,
\begin{align*}
\lim_{\substack{x\in C\\x\to0}}\tilde H_{t}(x,y)=\frac{1}{M_t}\prod_{j=1}^{n}\sech(\frac{\pi}{2t}y_{j})\prod_{1\leq i<j\leq n}\left(\tanh(\frac{\pi}{2t}y_{i})-\tanh(\frac{\pi}{2t}y_{j})\right),
\end{align*}
where 
$$\tilde{H}_{t}(x,y)=\left(\int_{C}H_{t}(x,y)dy\right)^{-1}H_{t}(x,y),$$
$M_t$ is the corresponding normalisation constant.
\end{proposition}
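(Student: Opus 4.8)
The plan is to mirror the proof of Proposition~\ref{bmpqpp} step by step; the only new input is that, after an exponential change of variables, $H_t(x,y)$ is once more a Cauchy determinant. Set $\alpha=\tfrac{\pi}{2t}$. Multiplying numerator and denominator of $\operatorname{sech}\!\big(\alpha(y_j-x_i)\big)$ by $e^{\alpha(x_i+y_j)}$ gives
\begin{align*}
h_t(x_i,y_j)=\frac1t\,\frac{e^{\alpha x_i}\,e^{\alpha y_j}}{e^{2\alpha x_i}+e^{2\alpha y_j}},
\end{align*}
so that, with $u_i:=e^{2\alpha x_i}$ and $v_j:=e^{2\alpha y_j}$, the matrix $\big(h_t(x_i,y_j)\big)_{i,j=1}^{n}$ is $\tfrac1t\operatorname{diag}(u_i^{1/2})$ times the Cauchy matrix $\big(1/(u_i+v_j)\big)_{i,j=1}^{n}$ times $\operatorname{diag}(v_j^{1/2})$. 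Cauchy's determinant identity then yields
\begin{align*}
H_t(x,y)=\frac1{t^{\,n}}\prod_{i}u_i^{1/2}\prod_{j}v_j^{1/2}\;\frac{\prod_{1\le i<j\le n}(u_i-u_j)(v_i-v_j)}{\prod_{1\le i,j\le n}(u_i+v_j)}.
\end{align*}

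The factors $t^{-n}$, $\prod_i u_i^{1/2}$ and $\prod_{i<j}(u_i-u_j)$ depend on $x$ only and hence cancel in the normalisation: $\tilde H_t(x,y)$ equals $F_x(y):=\prod_j v_j^{1/2}\,\prod_{i<j}(v_i-v_j)/\prod_{i,j}(u_i+v_j)$ divided by its integral over $C$. The $x$-dependence of $F_x$ sits entirely in the denominator $\prod_{i,j}(u_i+v_j)$, which tends to $\prod_j(1+v_j)^{n}$ as $x\to0$ in $C$ (each $u_i\to1$). Exactly as in Proposition~\ref{bmpqpp}, for $x$ in a small neighbourhood of $0$ one bounds $|F_x(y)|$ uniformly from above by a constant multiple of $\prod_j v_j^{1/2}(1+v_j)^{-n}\big|\prod_{i<j}(v_i-v_j)\big|$, whose integral over $C$ is finite: the substitution $w_j=\tanh(\alpha y_j)\in(-1,1)$ turns the associated measure into the bounded Jacobi-type weight $\prod_j(1-w_j^2)^{-1/2}\prod_{i<j}(w_i-w_j)\,dw$ on the bounded simplex $\{-1<w_n<\dots<w_1<1\}$ (compare Section~\ref{unitdisk}). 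Dominated convergence then yields convergence of $F_x$, and of the normalising constant, so that $\tilde H_t(x,y)$ converges to $\frac1{M_t}\prod_{j=1}^{n}v_j^{1/2}(1+v_j)^{-n}\prod_{1\le i<j\le n}(v_i-v_j)$.

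It remains to put this limit into the stated form, which is pure algebra via the two identities $\frac{v^{1/2}}{1+v}=\tfrac12\operatorname{sech}(\alpha y)$ and $\tanh(\alpha y_i)-\tanh(\alpha y_j)=\frac{2(v_i-v_j)}{(1+v_i)(1+v_j)}$, valid for $v=e^{2\alpha y}$. The second gives $\prod_{i<j}(v_i-v_j)=2^{-\binom n2}\prod_k(1+v_k)^{n-1}\prod_{i<j}\big(\tanh(\alpha y_i)-\tanh(\alpha y_j)\big)$; inserting this into the limit makes the powers of $1+v_k$ collapse to $\prod_k\frac{v_k^{1/2}}{1+v_k}=\prod_k\tfrac12\operatorname{sech}(\alpha y_k)$ by the first identity, and the leftover numerical constant $2^{-\binom n2-n}$ is absorbed into $M_t$, leaving exactly $\frac1{M_t}\prod_j\operatorname{sech}(\alpha y_j)\prod_{i<j}\big(\tanh(\alpha y_i)-\tanh(\alpha y_j)\big)$.

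The step requiring genuine care --- and hence the main obstacle --- is the passage to the limit: producing the integrable dominating function for $F_x$ on the unbounded chamber $C$ uniformly in $x$ near $0$ and justifying the interchange of limit and integral in the normalising constant, exactly as in Proposition~\ref{bmpqpp}; once $w_j=\tanh(\alpha y_j)$ identifies the dominating function with an integrable Jacobi-type density on a compact set, this is routine and the rest is bookkeeping. Alternatively, one may bypass the direct computation and deduce the statement from Proposition~\ref{bmpqpp} via the conformal map $f(z)=e^{\pi z/2t}$ of Section~\ref{bmstrip}, which sends $\Omega_2$ onto $\Omega_1$ and the real axis onto the positive real axis with $f(0)=1$: by Proposition~\ref{ccphd}, $h_t(x_i,y_j)=\alpha\,e^{\alpha y_j}h\big(e^{\alpha x_i},e^{\alpha y_j}\big)$, so under $Y_j=e^{\alpha y_j}$ (mapping $C$ onto the chamber $D$ of Proposition~\ref{bmpqpp}, with Jacobian cancelling the weight $\prod_j Y_j$) the density $\tilde H_t$ equals $\alpha^n\prod_j Y_j$ times the normalised quadrant density $\tilde H$, the limit $x\to0$ becomes the $t=1$ (merging at $f(0)=1$) case of Proposition~\ref{bmpqpp}, and substituting $Y_j^2=e^{2\alpha y_j}$ back recovers the hyperbolic expression.
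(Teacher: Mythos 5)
Your proof is correct and follows essentially the same route as the paper: an explicit Cauchy-determinant evaluation of $H_t$ (the paper writes it directly in the $\mathrm{sech}$--$\sinh$ form, you pass to the variables $u_i=e^{2\alpha x_i}$, $v_j=e^{2\alpha y_j}$, which is the same identity), cancellation of the $x$-only factors in the normalisation, a uniform dominating bound for $x$ near $0$ combined with dominated convergence, and the hyperbolic identities to reach the stated form. Your two additions are sound but inessential: you verify the integrability of the dominating function explicitly via the $w_j=\tanh(\alpha y_j)$ substitution, a step the paper leaves as ``can be verified'', and you sketch an alternative reduction to Proposition \ref{bmpqpp} through the conformal map $f(z)=e^{\pi z/2t}$, which is consistent with how the paper derived $h_t$ in the first place.
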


\begin{proof}
For all $x,y\in C$, the function $H_{t}(x,y)$ is positive (see \cite{Karlin2}), and the following explicit expression for $H_{t}(x,y)$ can be obtained
\begin{align*}
H_{t}(x,y)=\frac{1}{(2t)^{n}}\prod_{1\leq i,j\leq n}{\rm sech}\left(\frac{\pi}{2t}(y_{j}-x_{i})\right)\prod_{1\leq i<j\leq n}{\rm sinh}\left(\frac{\pi}{2t}(x_{i}-x_{j})\right){\rm sinh}\left(\frac{\pi}{2t}(y_{i}-y_{j})\right).
\end{align*}
Consider the normalised density
$$\tilde H_{t}(x,y)=\left(\int_{C}F_{x}(t,y)dy\right)^{-1}F_{x}(t,y),$$
where
$$F_{x}(t,y)=\prod_{1\leq i,j\leq n}{\rm sech}\left(\frac{\pi}{2t}(y_{j}-x_{i})\right)\prod_{1\leq i<j\leq n}{\rm sinh}\left(\frac{\pi}{2t}(y_{i}-y_{j})\right).$$ 
Let $\varepsilon>0$. We have that for all $x\in C$ such that $|x_{i}|<\varepsilon$, $1\leq i\leq n$, and for all $y\in C$
$$|F_{x}(t,y)|\leq \prod_{j=1}^{n}\left(\frac{2c}{e^{\frac{\pi}{2t}y_{j}}+c^{-2}e^{-\frac{\pi}{2t}y_{j}}}\right)^{n}\prod_{1\leq i<j\leq n}{\rm sinh}\left(\frac{\pi}{2t}(y_{i}-y_{j})\right),$$
where $c=e^{\pi\varepsilon/2t}$. It can be verified that the function on the right hand side above is integrable over $C$. Therefore, by the dominated convergence theorem, for any $t>0$ and $y\in C$, it holds
\begin{align*}
\lim_{\substack{x\in C\\x\to0}}\tilde H_{t}(x,y)&=\frac{1}{M_t}\prod_{1\leq i,j\leq n}{\rm sech}\left(\frac{\pi}{2t}y_{j}\right)\prod_{1\leq i<j\leq n}{\rm sinh}\left(\frac{\pi}{2t}(y_{i}-y_{j})\right)\\
&=\frac{1}{M_t}\prod_{j=1}^{n}\sech(\frac{\pi}{2t}y_{j})\prod_{1\leq i<j\leq n}\left(\tanh(\frac{\pi}{2t}y_{i})-\tanh(\frac{\pi}{2t}y_{j})\right),
\end{align*}
where $M_t$ is the corresponding normalisation constant. 

\end{proof}

\subsection{Brownian motion in the half unit disk}
\label{unitdisk}
The image of the positive quadrant $\Omega_{1}$ through the conformal map $f(z)=\frac{z-1}{z+1}$ gives the upper half unit disk
\begin{align*}
\Omega_{3}=\{z\in\mathbb{C}: |z|<1, \mathfrak{Im}(z)>0 \}.
\end{align*}
The `Brownian motion' $B$ in $\overline{\Omega}_{3}$ reflects in the $x$-axis and stops once it reaches the boundary $|z|=1$. The hitting density for this process, starting at a point $x\in\mathbb{R}$, $|x|<1$, and stopped until it hits the point $z=e^{i\theta}$, $0<\theta<\pi$, is given by the well-known formula (see \cite{Durrett}, section 1.10):
\begin{align*}
h(x,\theta)&=\frac{1}{\pi}\frac{1-x^{2}}{1-2x\cos\theta +x^{2}},\quad 0<\theta<\pi.
\end{align*}
As before, consider the determinant of hitting densities
$$H(x,\theta)=\det(h(x_{i},y_{j}))_{i,j=1}^{n},\quad x\in N, \theta\in \Theta,$$
where
\begin{align*}
N=&\{x\in\mathbb{R}^{n}
: -1<x_{n}<x_{n-1}<...<x_{1}<1\}\quad{\rm and}\\
&\Theta=\{\theta\in\mathbb{R}^{n}:0<\theta_{1}<\theta_{2}<...<\theta_{n}<\pi\}.
\end{align*}

\begin{proposition}\label{bmithud}
For any $\theta\in\Theta$
\begin{align*}
\lim_{\substack{x\in N\\x\to 0}}\tilde H(x,\theta)=\frac{1}{M}\prod_{1\leq i<j\leq n}|e^{i\theta_{i}}-e^{i\theta_{j}}||e^{i\theta_{i}}-e^{-i\theta_{j}}|,
\end{align*}
where 
$$\tilde{H}(x,\theta)=\left(\int_{\Theta}H(x,\theta)d\theta\right)^{-1}H(x,\theta),$$
$M$ is the corresponding normalisation constant.
\end{proposition}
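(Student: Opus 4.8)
The plan is to follow the template of the two preceding propositions: realise $H(x,\theta)$ as a Cauchy-type determinant with an explicit product form, normalise, observe which factors are $\theta$-independent and hence cancel, and pass to the limit by dominated convergence over the \emph{bounded} chamber $\Theta$. For Step~1, write
$$1-2x_{i}\cos\theta_{j}+x_{i}^{2}=2x_{i}\left(\frac{1+x_{i}^{2}}{2x_{i}}-\cos\theta_{j}\right),$$
and set $p_{i}=\frac{1+x_{i}^{2}}{2x_{i}}$, $q_{j}=\cos\theta_{j}$. For $x\in N$ with all $x_{i}\neq0$ the $p_{i}$ are distinct (the map $x\mapsto\frac{1+x^{2}}{2x}$ is strictly monotone, hence injective, on $(-1,1)\setminus\{0\}$, and $|p_{i}|\ge1$ always), the $q_{j}$ are distinct, and $p_{i}\neq q_{j}$ since $2x_{i}(p_{i}-q_{j})=|1-x_{i}e^{i\theta_{j}}|^{2}\neq0$. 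Thus $(h(x_{i},\theta_{j}))_{i,j}$ is, after scaling row $i$ by $\frac{1-x_{i}^{2}}{2\pi x_{i}}$, the Cauchy matrix $(1/(p_{i}-q_{j}))$, and the Cauchy determinant identity (cf. \cite{Karlin2}, exactly as in the proof of Proposition~\ref{bmpqpp}) yields
$$H(x,\theta)=\frac{1}{\pi^{n}}\prod_{i=1}^{n}\frac{1-x_{i}^{2}}{2x_{i}}\cdot\frac{\prod_{1\le i<j\le n}(p_{i}-p_{j})(q_{j}-q_{i})}{\prod_{1\le i,j\le n}(p_{i}-q_{j})}.$$
As in the earlier cases, $H$ is positive on $N\times\Theta$: $h(x,\theta)>0$ (it is a Poisson kernel) and the ordering of the $p_{i},q_{j}$ pins down the sign.

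For Step~2, factor $\prod_{i,j}(p_{i}-q_{j})=\prod_{i}p_{i}^{n}\prod_{i,j}(1-q_{j}/p_{i})$ and collect all factors independent of $\theta$ into one factor $A(x)$, so that $H(x,\theta)=A(x)\,G_{x}(\theta)$ with
$$G_{x}(\theta)=\prod_{1\le i<j\le n}(\cos\theta_{j}-\cos\theta_{i})\Big/\prod_{1\le i,j\le n}\bigl(1-\cos\theta_{j}/p_{i}\bigr).$$
Then $\tilde H(x,\theta)=G_{x}(\theta)\big/\!\int_{\Theta}G_{x}(\theta)\,d\theta$, so $A(x)$ — which carries $\prod_{i}\frac{1-x_{i}^{2}}{2x_{i}}$, $\prod_{i<j}(p_{i}-p_{j})$ and, crucially, the \emph{diverging} $\prod_{i}p_{i}^{-n}$ — drops out entirely. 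As $x\to0$ we have $p_{i}\to\infty$ while $|\cos\theta_{j}|\le1$, so $G_{x}(\theta)\to\prod_{i<j}(\cos\theta_{j}-\cos\theta_{i})$ uniformly on $\Theta$; since $\Theta$ is bounded and $G_{x}$ is uniformly bounded near $x=0$ (recall $|p_{i}|\ge1$, so the denominator stays $\ge$ a positive constant), dominated convergence gives
$$\lim_{\substack{x\in N\\x\to0}}\tilde H(x,\theta)=\frac{1}{M'}\prod_{1\le i<j\le n}(\cos\theta_{i}-\cos\theta_{j}),$$
where $M'>0$ is the corresponding normaliser over $\Theta$ (finite as the integrand is continuous on a bounded set, nonzero as it has constant sign on $\Theta^{\circ}$), the overall sign $(-1)^{\binom{n}{2}}$ being absorbed into $M'$.

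For Step~3, apply the product-to-sum identity $\cos\theta_{i}-\cos\theta_{j}=2\sin\!\frac{\theta_{i}+\theta_{j}}{2}\sin\!\frac{\theta_{j}-\theta_{i}}{2}=\tfrac12|e^{i\theta_{i}}-e^{i\theta_{j}}|\,|e^{i\theta_{i}}-e^{-i\theta_{j}}|$, valid and strictly positive for $0<\theta_{i}<\theta_{j}<\pi$, to rewrite the limit as $\frac{1}{M}\prod_{i<j}|e^{i\theta_{i}}-e^{i\theta_{j}}|\,|e^{i\theta_{i}}-e^{-i\theta_{j}}|$ with $M=2^{\binom{n}{2}}M'>0$, which is the claimed form (manifestly a probability density on $\Theta$). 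This also supplies the integrability statement invoked in Section~\ref{bmstrip}/Proposition~\ref{bmpqpp}.

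An essentially equivalent route bypasses Step~1 altogether: the conformal map $f(z)=\frac{z-1}{z+1}$ carries $\Omega_{3}$ onto $\Omega_{1}$, sending the reflecting diameter to the positive real axis and the absorbing arc to the positive imaginary axis, so by Proposition~\ref{ccphd} $H(x,\theta)$ is a purely $\theta$-dependent Jacobian times the quadrant determinant of Section~\ref{bmpq} evaluated at $\xi_{i}=\frac{1+x_{i}}{1-x_{i}}$; as $x\to0$ one has $\xi\to1$, and the limit is read off from Proposition~\ref{bmpqpp} with $t=1$ after the substitution $y_{j}=\cot(\theta_{j}/2)$, the Jacobian cancelling the $(1+y_{j}^{2})^{-n}$ factors and the $\sin^{2}(\theta_{j}/2)$ terms collapsing as in Step~3. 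In either approach the only point requiring real care is the bookkeeping around the degeneration at $x=0$ — the divergent factor $\prod_{i}p_{i}^{-n}$ (equivalently the degenerating Jacobian together with part of the Vandermonde) must be seen to be absorbed entirely by the normalisation, which is why one must work with the ratio $G_{x}/\!\int G_{x}$ rather than with $H$ itself — whereas the analytic input, dominated convergence over $\Theta$, is trivial since $\Theta$ is bounded.
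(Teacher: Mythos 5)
Your proof is correct and follows essentially the same route as the paper: an explicit Cauchy-determinant product formula for $H(x,\theta)$ (which the paper quotes from Karlin's book and you derive via the substitution $p_i=(1+x_i^2)/(2x_i)$, $q_j=\cos\theta_j$), cancellation of all $\theta$-independent factors in the normalised ratio, bounded convergence over the bounded chamber $\Theta$, and the identity $2(\cos\theta_i-\cos\theta_j)=|e^{i\theta_i}-e^{i\theta_j}|\,|e^{i\theta_i}-e^{-i\theta_j}|$. The only cosmetic caveat is that the $p_i$-parametrisation degenerates when a coordinate $x_i$ equals $0$ (which can happen for points of $N$ approaching the origin), so there one should phrase the domination directly in terms of $1-2x_i\cos\theta_j+x_i^2\ge(1-\varepsilon)^2$ as the paper does; this changes nothing of substance.
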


\begin{remark}
The above density can be thought of as the $\beta=1$ version of the eigenvalue density of a random matrix in $SO(2n)$, which is the subgroup of unitary matrices consisting of $2n\times2n$ orthogonal matrices with determinant one (see \cite{Conrey}).
\end{remark}

\begin{proof}
The function $H(x,\theta)$ can be expressed explicitly as
\begin{align*}
H(x,\theta)=H'(x)\prod_{1\leq i,j\leq n}(1-2x_{i}\cos\theta_{j}+x_{i}^{2})^{-1}\prod_{1\leq i<j\leq n}2(\cos\theta_{i}-\cos\theta_{j}),
\end{align*}
where
$$H'(x)=\frac{1}{\pi^{n}}\prod_{i=1}^{n}(1-x_{i}^{2})\prod_{1\leq i<j\leq n}(x_{i}-x_{j})(1-x_{i}x_{j}).$$
For all $x\in N$, $\theta\in\Theta$, the function $H_{t}(x,y)$ is positive. Consider the normalised density
$$\tilde{H}(x,\theta)=\left(\int_{\Theta}F_{x}(\theta)d\theta\right)^{-1}F_{x}(\theta),$$
where
$$F_{x}(\theta)=\frac{H(x,\theta)}{H'(x)}=\prod_{1\leq i,j\leq n}(1-2x_{i}\cos\theta_{j}+x_{i}^{2})^{-1}\prod_{1\leq i<j\leq n}2(\cos\theta_{i}-\cos\theta_{j}).$$
Let $0<\varepsilon<1$, and assume that $|x_{i}|<\varepsilon$, for all $1\leq i\leq n$. We have that $|1-2x_{i}\cos\theta_{j}+x_{i}^{2}|>(1-\varepsilon)^{2}$ and therefore
\begin{align*}
|F_{x}(\theta)|\leq 2^{n(n-1)}(1-\varepsilon)^{-2n^{2}},\quad\text{for all}\,\,\theta\in\Theta.
\end{align*}
Since $\Theta$ is a bounded set, by the bounded convergence theorem it follows that for any $\theta\in\Theta$
\begin{align}\label{coslimit}
\lim_{\substack{x\in N\\x\to 0}}\tilde H(x,\theta)
&=\frac{1}{M}\prod_{1\leq i<j\leq n}2(\cos\theta_{i}-\cos\theta_{j})\\
&=\frac{1}{M}\prod_{1\leq i<j\leq n}|e^{i\theta_{i}}-e^{i\theta_{j}}||e^{i\theta_{i}}-e^{-i\theta_{j}}|,\notag
\end{align}
where 
$$M=\int_{\Theta}\prod_{1\leq i<j\leq n}2(\cos\theta_{i}-\cos\theta_{j})d\theta$$ is the normalisation constant.

\end{proof}

\subsection{A note on excursion Poisson kernel determinants}
\label{nnoepkd}
In all the examples of Sections \ref{bmpq}, \ref{bmstrip} and \ref{unitdisk}, we have imposed both {\it absorbing} and {\it normal reflecting} boundary conditions on the domains under consideration. If, on the other hand, the whole boundary $\partial\Omega$ is absorbing, then we require a different notion of hitting density $h(x,y)$ (since the paths need to `walk' into the interior $\Omega^{\circ}=\Omega\setminus\partial\Omega$ before reaching their destination). Therefore, in order to study determinants of the form (\ref{Poissonkernel4}) and (\ref{Poissonkernel24}), we consider the so-called {\it excursion Poisson kernel} $h_{\partial\Omega}(x,y)$, which can be defined as the limit
\begin{align*}
h_{\partial\Omega}(x,y)=\lim_{\varepsilon\to0}\frac{1}{\varepsilon}h(x+\varepsilon{\bf n}_{x},y),\quad x,y\in\partial\Omega,
\end{align*}
where $h(z,y)$, $z\in\Omega$, is the usual hitting density (Definition \ref{definitionkerles}), and ${\bf n}_{x}$ is the unit normal at $x$ pointing into $\Omega$ (see \cite{Lawler2} for details). As we said before, intuitively, the excursion Poisson kernel requires the path to `walk' into $\Omega$ before reaching $\partial\Omega$, and it is the scaling limit of simple random walk excursion probabilities \cite{Lawler2,KL}.

It can be shown that, similarly to Proposition \ref{ccphd}, the excursion Poisson kernel satisfies a {\it conformal covariance property}:
\begin{align*}
h_{\partial\Omega}(x,y)=|f'(x)||f'(y)|h_{\partial\Omega'}(f(x),f(y)),
\end{align*} 
where $f:\Omega\to\Omega'$ is any conformal transformation. This implies that the determinant of excursion Poisson kernels:
\begin{align}\label{depk11}
\frac{\det(h_{\partial\Omega}(x_{i},y_{j}))_{i,j=1}^{n}}{\prod_{i=1}^{n}h_{\partial\Omega}(x_{i},y_{i})},
\end{align}
is a conformal {invariant} (see \cite{KL}). In particular, if $\Omega$ is the half unit circle of Section \ref{unitdisk}, standard calculations show that the excursion Poisson kernel is given by
\begin{align*}
h_{\partial\Omega}(x,\theta)=\frac{2}{\pi}\frac{(1-x^{2})\sin\theta}{(1-2x\cos\theta+x^{2})^{2}},\quad0<\theta<\pi,
\end{align*}
for $x\in\mathbb{R}$, $|x|<1$. The next proposition is the excursion Poisson kernel analogue of Proposition \ref{bmithud}.

\begin{proposition}\label{prepk}
As in Proposition \ref{bmithud}, let $\Theta$ be the set
$$\Theta=\{\theta\in\mathbb{R}^{n}:0<\theta_{1}<\theta_{2}<...<\theta_{n}<\pi\}.$$
Then
\begin{align*}
\lim_{x_{1},...,x_{n}\to0}\frac{\det(h_{\partial\Omega}(x_{i},\theta_{j}))_{i,j=1}^{n}}{\int_{\Theta}\det(h_{\partial\Omega}(x_{i},\theta_{j}))_{i,j=1}^{n}d\theta}=\frac{1}{M}\prod_{j=1}^{n}\sin\theta_{j}\prod_{1\leq i<j\leq n}(\cos\theta_{i}-\cos\theta_{j}),
\end{align*}
where the limit is taken over points $-1<x_{n}<x_{n-1}<...<x_{1}<1$ and $M$ is the corresponding normalisation constant.
\end{proposition}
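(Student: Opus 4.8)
The plan is to follow the pattern of Proposition \ref{bmithud}: rewrite the excursion kernel in \emph{Cauchy form} so that a pure Vandermonde in the $\cos\theta_{j}$ is pulled out of the determinant, show that everything depending on $x$ collapses to a single scalar factor $\Delta(x)=\prod_{i<j}(x_{j}-x_{i})$ in the limit, and then pass to the limit under the $\theta$-integral by dominated convergence (which is cheap because $\Theta$ is bounded). Concretely, since $1-2x\cos\theta+x^{2}=2x\,(w-\cos\theta)$ with $w=w(x):=\tfrac{1+x^{2}}{2x}$, one has
\begin{align*}
h_{\partial\Omega}(x,\theta)=\frac{1-x^{2}}{2\pi x^{2}}\,\sin\theta\;\frac{1}{\big(w(x)-\cos\theta\big)^{2}},
\end{align*}
so that, by multilinearity of the determinant,
\begin{align*}
\det\!\big(h_{\partial\Omega}(x_{i},\theta_{j})\big)_{i,j=1}^{n}=\prod_{i=1}^{n}\frac{1-x_{i}^{2}}{2\pi x_{i}^{2}}\;\prod_{j=1}^{n}\sin\theta_{j}\;\det\!\left(\frac{1}{(w_{i}-\cos\theta_{j})^{2}}\right)_{i,j=1}^{n},\qquad w_{i}:=w(x_{i}).
\end{align*}
This is exactly the substitution implicit in the proof of Proposition \ref{bmithud}, except that here one meets the \emph{double-pole} kernel $1/(w-\cos\theta)^{2}$ instead of the genuine Cauchy kernel $1/(w-\cos\theta)$.

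The key step is the asymptotics of $D(x,\theta):=\det\big((w_{i}-\cos\theta_{j})^{-2}\big)$ as $x\to0$ inside the chamber $-1<x_{n}<\cdots<x_{1}<1$, which forces $w_{i}\to\infty$ with $w_{i}\sim\tfrac{1}{2x_{i}}$. Expanding $\tfrac{1}{(w_{i}-v)^{2}}=\sum_{k\ge0}(k+1)\,v^{k}\,w_{i}^{-k-2}$ (valid once $|\cos\theta_{j}|\le1<|w_{i}|$) and applying the Cauchy--Binet formula gives
\begin{align*}
D(x,\theta)=\sum_{0\le k_{1}<\cdots<k_{n}}\ \prod_{l=1}^{n}(k_{l}+1)\ \det\!\big(w_{i}^{-k_{l}-2}\big)_{i,l=1}^{n}\ \det\!\big((\cos\theta_{j})^{k_{l}}\big)_{l,j=1}^{n}.
\end{align*}
As $w_{i}\to\infty$ the term of lowest total order, $(k_{1},\dots,k_{n})=(0,1,\dots,n-1)$, dominates; it equals $n!\,\prod_{i}w_{i}^{-2}\prod_{i<j}\tfrac{w_{i}-w_{j}}{w_{i}w_{j}}\prod_{i<j}(\cos\theta_{j}-\cos\theta_{i})$, and using $w_{i}-w_{j}=\tfrac{(x_{j}-x_{i})(1-x_{i}x_{j})}{2x_{i}x_{j}}$ one obtains, as $x\to0$,
\begin{align*}
\det\!\big(h_{\partial\Omega}(x_{i},\theta_{j})\big)_{i,j=1}^{n}\ \sim\ c_{n}\;\Delta(x)\;\prod_{j=1}^{n}\sin\theta_{j}\ \prod_{1\le i<j\le n}(\cos\theta_{j}-\cos\theta_{i}),
\end{align*}
with $c_{n}>0$ an explicit constant and $\Delta(x)=\prod_{i<j}(x_{j}-x_{i})$ depending on $x$ only. (One may instead derive this from Borchardt's identity $\det\big((w_{i}-v_{j})^{-2}\big)=\det\big((w_{i}-v_{j})^{-1}\big)\operatorname{perm}\big((w_{i}-v_{j})^{-1}\big)$ together with the Cauchy determinant formula for the first factor and $\operatorname{perm}\big((w_{i}-v_{j})^{-1}\big)\sim n!\prod_{i}w_{i}^{-1}$.)

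To conclude, write $\tilde H(x,\theta)$ for the normalised determinant on the left-hand side of the claimed identity and divide its numerator and denominator by $c_{n}\Delta(x)$; since $\Delta(x)$ is independent of $\theta$ it passes through the $\theta$-integral. The numerator converges pointwise on $\Theta$ to $\prod_{j}\sin\theta_{j}\prod_{i<j}(\cos\theta_{j}-\cos\theta_{i})$, and for the denominator I would apply dominated convergence: the quotient $\det\big(h_{\partial\Omega}(x_{i},\theta_{j})\big)/\Delta(x)$ is antisymmetric in $x_{1},\dots,x_{n}$ and jointly real-analytic in $(x,\theta)$ on $(-1,1)^{n}\times\mathbb{R}^{n}$, hence extends real-analytically (in particular continuously, hence boundedly) to a neighbourhood of the compact set $\{0\}\times\overline{\Theta}$, so a constant is an integrable majorant on the bounded set $\Theta$. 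This yields
\begin{align*}
\lim_{\substack{x\to0\\ x\in N}}\tilde H(x,\theta)=\frac{1}{M}\prod_{j=1}^{n}\sin\theta_{j}\ \prod_{1\le i<j\le n}(\cos\theta_{i}-\cos\theta_{j}),
\end{align*}
the passage from $\cos\theta_{j}-\cos\theta_{i}$ to $\cos\theta_{i}-\cos\theta_{j}$ being the harmless global sign $(-1)^{\binom{n}{2}}$ absorbed into the normalisation $M$ (both sides are nonnegative on $\Theta$).

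The main obstacle is the middle step: unlike in Proposition \ref{bmithud}, the matrix entries form a double-pole Cauchy kernel, for which there is no elementary closed product formula (Borchardt's identity only reduces it to a determinant times a permanent), so one must extract the $x\to0$ behaviour from the Cauchy--Binet expansion and then supply the uniform bound needed to commute the limit with the $\theta$-integral. The remaining bookkeeping of Vandermonde factors is routine, exactly as in Sections \ref{an} and \ref{unitdisk}.
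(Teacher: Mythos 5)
Your argument is correct, and its engine is different from the paper's. The paper does not expand the double-pole kernel at all: it invokes the Cayley/Borchardt identity (citing Carlitz--Levine) to write $\det\bigl((1-2x_i\cos\theta_j+x_i^2)^{-2}\bigr)$ \emph{exactly} as the single-pole Cauchy determinant (already evaluated in Section \ref{unitdisk}) times the permanent $\operatorname{per}\bigl((1-2x_i\cos\theta_j+x_i^2)^{-1}\bigr)$; the $x$-dependent prefactor $G(x)$ then cancels identically in the normalised ratio, the permanent factor $P(x,\theta)$ tends to $n!$ and admits the elementary uniform bound $n!\,(1-\varepsilon)^{-2(n^2+n)}$ for $|x_i|<\varepsilon$, and bounded convergence on the bounded set $\Theta$ finishes the proof. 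You instead extract only the leading $x\to0$ behaviour from the geometric-series/Cauchy--Binet expansion of $\det\bigl((w_i-\cos\theta_j)^{-2}\bigr)$, and you replace the explicit uniform bound by the observation that the antisymmetric real-analytic function $\det\bigl(h_{\partial\Omega}(x_i,\theta_j)\bigr)$ is divisible by the Vandermonde $\Delta(x)$ with real-analytic (hence locally bounded) quotient, which supplies the dominating constant near $\{0\}\times\overline{\Theta}$; this is a legitimate and arguably slicker way to justify the interchange, and it also sidesteps the degeneracy of the $w$-substitution at $x_i=0$, at the cost of leaving the tail estimate in the Cauchy--Binet sum (dominance of $(k_1,\dots,k_n)=(0,\dots,n-1)$) as a routine but unwritten bound, whereas the paper's route yields an exact closed formula and positivity of the determinant for all $x$ in the chamber, not just asymptotics. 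Your parenthetical alternative via Borchardt's identity is precisely the paper's proof. Two small slips to fix: the quotient $\det\bigl(h_{\partial\Omega}(x_i,\theta_j)\bigr)/\Delta(x)$ is \emph{symmetric} in $x_1,\dots,x_n$ (it is the divisibility of the antisymmetric determinant by $\Delta(x)$ that you actually use), and you should record the constant $c_n=(2/\pi)^n\,2^{\binom{n}{2}}\,n!$ and check, as you do implicitly, that the limiting denominator integral is nonzero because the limiting integrand has constant sign on $\Theta$.
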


\begin{proof}
Note that
\begin{align*}
\det(h_{\partial\Omega}(x_{i},\theta_{j}))_{i,j=1}^{n}=\left(\frac{2}{\pi}\right)^{n}\prod_{i=1}^{n}(1-x_{i}^{2})\prod_{j=1}^{n}\sin\theta_{j}\,\det\left(B\right),
\end{align*}
where $B=(b_{i,j})$ is the $n\times n$ matrix with positive entries
\begin{align*}
b_{i,j}=\frac{1}{(1-2x_{i}\cos\theta_{j}+x_{i}^{2})^{2}}.
\end{align*}
The determinant $\det(B)$ can be expressed as the product (see  \cite{CL}):
\begin{align}\label{carlitz}
\det(B)=\det\left(\frac{1}{1-2x_{i}\cos\theta_{j}+x_{i}^{2}}\right)_{i,j=1}^{n}{\rm per}\left(\frac{1}{1-2x_{i}\cos\theta_{j}+x_{i}^{2}}\right)_{i,j}^{n},
\end{align}
where the permanent of a square matrix is defined as
\begin{align*}
{\rm per}(a_{i,j})_{i,j=1}^{n}=\sum_{\sigma\in S_{n}}\prod_{i=1}^{n}a_{i,\sigma(i)}.
\end{align*}
The determinant in the right hand side of (\ref{carlitz}) was considered in Section \ref{unitdisk}. Therefore, we can conclude that  
\begin{align}\label{fullexpr}
\det(h_{\partial\Omega}(x_{i},\theta_{j}))_{i,j=1}^{n}=G(x)P(x,\theta)\prod_{j=1}^{n}\sin\theta_{j}\prod_{1\leq i<j\leq n}2(\cos\theta_{i}-\cos\theta_{j}),
\end{align}
where $G(x)$ and $P(x,\theta)$ are given by
\begin{align*}
G(x)&=\left(\frac{2}{\pi}\right)^{n}\prod_{i=1}^{n}(1-x_{i}^{2})\prod_{1\leq i<j\leq n}(x_{i}-x_{j})(1-x_{i}x_{j}),\\
P(x,\theta)&=\prod_{1\leq i,j\leq n}(1-2x_{i}\cos\theta_{j}+x_{i}^{2})^{-1}{\rm per}\left(\frac{1}{1-2x_{i}\cos\theta_{j}+x_{i}^{2}}\right)_{i,j}^{n}.
\end{align*}
For all $-1<x_{n}<x_{n-1}<...<x_{1}<1$ and $\theta\in\Theta$, the determinant (\ref{fullexpr}) is then positive and, since the term $G(x)$ depends only on the variables $x_{1},...,x_{n}$, it holds that
\begin{align*}
\frac{\det(h_{\partial\Omega}(x_{i},\theta_{j}))_{i,j=1}^{n}}{\int_{\Theta}\det(h_{\partial\Omega}(x_{i},\theta_{j}))_{i,j=1}^{n}d\theta}=\left(\int_{\Theta}Q_{x}(\theta)d\theta\right)^{-1}Q_{x}(\theta),
\end{align*}
where
\begin{align*}
Q_{x}(\theta)=P(x,\theta)\prod_{j=1}^{n}\sin\theta_{j}\prod_{1\leq i<j\leq n}2(\cos\theta_{i}-\cos\theta_{j}).
\end{align*}
Finally, note that for each $\theta\in\Theta$, $\lim P(x,\theta)=n!$ when $x_{i}\to0$, $1\leq i\leq n$, and
\begin{align*}
|Q_{x}(\theta)|\leq n!\,2^{n(n-1)}(1-\varepsilon)^{-2(n^{2}+n)},\quad\text{for all}\,\,\theta\in\Theta,
\end{align*}
whenever $|x_{i}|<\varepsilon$, $0<\varepsilon<1$, for all $1\leq i\leq n$. Since $\Theta$ is bounded, the desired result follows from the bounded convergence theorem.

\end{proof}

Proposition \ref{prepk} agrees with certain asymptotics of an excursion Poisson kernel determinant in \cite{Katori}, in the context of rectangular domains of the complex plane.

\section{Circular ensembles}
\label{circular}

In this section we consider limits of determinants of hitting densities of the (affine) form (\ref{Poissonkernel24})  
\begin{align}\label{Poissonkernel25}
H(x,y)=\det\left(\sum_{k\in\mathbb{Z}}\zeta^{k}h(x_{i},y_{j}+mk)\right)_{i,j=1}^{n}dy_{1}\cdots dy_{n},
\end{align}
where  
$$
\zeta= \left\{
        \begin{array}{ll}
            1 & \quad \text{if $n$ is odd} \\
            -1 & \quad \text{if $n$ is even},
        \end{array}
    \right.
$$
and reveal some natural connections with circular ensembles of random matrix theory, similar to the connections described in Section \ref{rmt} with Cauchy type ensembles. In particular, by considering the hitting density of the two-dimensional Brownian motion in an annulus on the complex plane, we obtain a novel interpretation of the Circular Orthogonal Ensemble (COE) (see Section \ref{secannulus}). Another example  is given in Section \ref{bmuc}, where we  review the well-known model of $n$ non-intersecting (one-dimensional) Brownian motions on the circle \cite{Werner} and detail its connection with the Circular Orthogonal Ensemble. An interesting consequence is Proposition \ref{indistinguishable}, which recovers the Karlin-McGregor (for $n$ odd) and Liechty-Wang (for $n$ even) determinant 
 formulas \cite{Karlin,Liechty}, for the transition density of $n$ {\em indistinguishable} non-intersecting Brownian motions on the circle, from the one in \cite{Werner}.

\subsection{Brownian motion on the unit circle}
\label{bmuc}
As a warm up before Section \ref{secannulus}, we describe the model of $n$ non-intersecting Brownian motions on the unit circle, originally studied by Hobson and Werner in \cite{Werner}. Here, the Brownian motions on $\mathbb{T}=\{e^{i\theta}:-\pi\leq\theta<\pi\}$ are given by 
\begin{align*}
\beta_{k}:=e^{i B_{k}},\quad 1\leq k\leq n,
\end{align*}
where $B_{1},B_{2},...,B_{n}$ are $n$ independent one-dimensional Brownian motions and we assume $n\geq2$. The following proposition shows that the above model can be studied by considering the exit time of the $n$-dimensional Brownian motion $B=(B_{1},B_{2},...,B_{n})$ of the domain 
\begin{align*}
\tilde{A}_{n}:=\{\nu\in\mathbb{R}^{n} : \nu_{n}<\nu_{n-1}<...<\nu_{2}<\nu_{1}<\nu_{n}+2\pi\}.
\end{align*}

\begin{proposition}[Hobson-Werner]
Let $B$ and $\tilde{A}_{n}$ as above. The transition density of the Brownian motion $B$ killed at its first exit from $\tilde{A}_{n}$ is given by
\begin{align}\label{cir}
q_{t}(\theta, \nu)=\sum_{\sigma\in S_{n}}\sum_{k_{1}+k_{2}+...+k_{n}=0}{\rm sgn}(\sigma)\prod_{i=1}^{n}p_{t}(\theta_{i},\nu_{\sigma(i)}+2\pi k_{i}),\quad t>0,
\end{align}
where $\theta=(\theta_{1},...,\theta_{n})\in\tilde{A}_{n}$, $\nu=(\nu_{1},...,\nu_{n})\in\tilde{A}_{n}$, and 
$$p_{t}(x,y)=\frac{1}{\sqrt{2\pi t}}e^{-\frac{(x-y)^{2}}{2t}}$$ is the normal density with mean $x$ and variance $t$.
\end{proposition}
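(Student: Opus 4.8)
\emph{Proof strategy.} The plan is to establish \eqref{cir} by the method of images, adapting the Karlin--McGregor reflection argument to the affine symmetric group; this is the continuous analogue of the Gessel--Zeilberger formula discussed in Section~\ref{generallatt}. The domain $\tilde{A}_{n}$ is a fundamental alcove for the action on $\mathbb{R}^{n}$ of the affine symmetric group $W\cong S_{n}\ltimes Q$, where $Q=\{k\in\mathbb{Z}^{n}:k_{1}+\cdots+k_{n}=0\}$, an element $w=(\sigma,2\pi k)$ acts by $(w\nu)_{i}=\nu_{\sigma^{-1}(i)}+2\pi k_{i}$, and ${\rm sgn}(w):={\rm sgn}(\sigma)=(-1)^{\ell(w)}$. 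The walls of $\tilde{A}_{n}$ are the finite hyperplanes $\nu_{i}=\nu_{i+1}$ ($1\le i\le n-1$), with wall reflection $s_{i}$ transposing coordinates $i$ and $i+1$, together with the affine wall $\nu_{1}=\nu_{n}+2\pi$, whose reflection is $s_{0}\colon(\nu_{1},\dots,\nu_{n})\mapsto(\nu_{n}+2\pi,\nu_{2},\dots,\nu_{n-1},\nu_{1}-2\pi)$; and $W=\langle s_{0},s_{1},\dots,s_{n-1}\rangle$. After a bijective relabelling of the summation indices, the right-hand side of \eqref{cir} is exactly
\begin{align*}
q_{t}(\theta,\nu)=\sum_{w\in W}{\rm sgn}(w)\prod_{i=1}^{n}p_{t}\bigl(\theta_{i},(w\nu)_{i}\bigr).
\end{align*}

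I would then verify three properties of this series. (i) \emph{Convergence and the heat equation}: the orbit point $w\nu$ escapes to infinity at a rate controlled by $\ell(w)$, so the Gaussian factors make the series and all its term-by-term derivatives converge locally uniformly on $(0,\infty)\times\mathbb{R}^{n}$; hence $q_{t}(\theta,\cdot)$ is smooth and, being a locally finite superposition of translated heat kernels, solves $\partial_{t}q=\tfrac12\Delta q$ on $\mathbb{R}^{n}$. (ii) \emph{Initial condition}: since $\theta$ lies in the open alcove $\tilde{A}_{n}^{\circ}$, on which $W$ acts freely, $w\theta\ne\theta$ for every $w\ne e$; thus as $t\downarrow0$ each term with $w\ne e$ tends to $0$ locally uniformly near $\theta$, while the $w=e$ term is the $n$-dimensional heat kernel, so $q_{t}(\theta,\cdot)\Rightarrow\delta_{\theta}$ on $\tilde{A}_{n}^{\circ}$.

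(iii) \emph{Dirichlet boundary condition}, $q_{t}(\theta,\nu)=0$ for $\nu\in\partial\tilde{A}_{n}$: this is the crucial point. Each reflection $s_{j}$ ($0\le j\le n-1$) fixes its wall pointwise, and $w\mapsto ws_{j}$ is a fixed-point-free, sign-reversing involution of $W$; on that wall $(ws_{j})\nu=w(s_{j}\nu)=w\nu$, so the terms of the sum cancel in pairs and the sum vanishes. Finally I would conclude by uniqueness: the transition density of Brownian motion killed on exiting $\tilde{A}_{n}$ is the unique solution of the heat equation on $\tilde{A}_{n}$ with zero boundary data and initial datum $\delta_{\theta}$ in the appropriate class. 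To handle the unboundedness of $\tilde{A}_{n}$ along the diagonal $(1,\dots,1)$ one observes that the centre of mass $\bar\nu=\tfrac1n\sum_{i}\nu_{i}$ is invariant under every $w\in W$ (because $\sum_{i}k_{i}=0$) and performs a free one-dimensional Brownian motion, while the component orthogonal to the diagonal lives in a bounded $(n-1)$-dimensional simplex; uniqueness on the bounded factor then identifies $q_{t}$ with the killed transition density.

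The main obstacle is the bookkeeping at the affine wall: one must correctly identify $s_{0}$ in these coordinates and check that right multiplication by $s_{0}$ is a sign-reversing involution of the index set $\{(\sigma,k):\sum_{i}k_{i}=0\}$. It is precisely the constraint $\sum_{i}k_{i}=0$ --- rather than allowing arbitrary $k\in\mathbb{Z}^{n}$ --- that makes $\langle s_{0},\dots,s_{n-1}\rangle$ act simply transitively on alcoves, so that the image-sum contributes one term per alcove and the cancellation closes; pinning down this normalisation, together with the (more routine) justification of the convergence of the lattice sum and its interchange with the limits $t\downarrow0$ and with the uniqueness step via the centre-of-mass splitting, is where the care is needed.
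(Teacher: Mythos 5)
Your proposal is sound, but it follows a genuinely different route from the paper. The paper attributes the result to Hobson and Werner \cite{Werner} and indicates only that the proof goes by a path-switching argument ``similar to the one of Theorem \ref{Fominaff}'': one applies the strong Markov property at the first time before $t$ that the $n$-dimensional Brownian motion hits a wall of $\tilde{A}_{n}$ and switches (reflects) the trajectory there, which yields a sign-reversing, law-preserving involution on the crossing configurations; all terms indexed by $w\neq e$ in the signed image sum then cancel pathwise against each other, identifying the sum directly as the killed transition density --- the continuous analogue of the involution used in the proof of the affine Fomin identity. You instead argue analytically by the method of images: re-index the sum over $W\cong S_{n}\ltimes Q$ acting on $\mathbb{R}^{n}$, verify absolute local convergence, the heat equation, the Dirichlet condition on each wall via the pairing $w\mapsto ws_{j}$ (including the affine wall $\nu_{1}=\nu_{n}+2\pi$ with the reflection $s_{0}$ you identify, which is correct), the initial condition $\delta_{\theta}$ using that $W$ acts freely on the open alcove, and then conclude by uniqueness of the Dirichlet heat kernel, handling the unbounded diagonal direction by splitting off the centre of mass, which every $w$ fixes precisely because $\sum_{i}k_{i}=0$, so that the common one-dimensional free factor cancels and uniqueness is only needed on the bounded simplex. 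This is essentially the continuous Gessel--Zeilberger argument \cite{Gessel2}. What each buys: the path-switching proof avoids any uniqueness class or regularity discussion, identifies the series probabilistically in one stroke, and is the version that transfers verbatim to the discrete lattice setting of Section \ref{avfi}; your PDE route makes each ingredient an elementary verification (wall cancellation, free action on the open alcove, centre-of-mass factorization), at the cost of the uniqueness step and the limit interchanges you rightly flag as the places requiring care.
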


The method of proof of the last proposition is by a path-switching argument, similar to the one of Theorem \ref{Fominaff}. The following corollary is a restatement of part $(i)$ of the main theorem in \cite{Werner} and describes the transition density for $n$ {\it labelled} particles in Brownian motion on the circle, constrained not to intersect until a fixed positive time.

\begin{cor}\label{zxc}
The (unnormalised) transition density of $n$ non-intersecting Brownian motions $(\beta_{1},...,\beta_{n})$ on the circle  is
\begin{align*}
q^{*}_{t}(e^{i\theta},e^{i\nu})=\sum_{\sigma\in S_{n}}\sum_{\substack{k_{1}+k_{2}+...+k_{n}=0\\{\rm mod}\,n}}{\rm sgn}(\sigma)\prod_{i=1}^{n}p_{t}(\theta_{i},\nu_{\sigma(i)}+2\pi k_{i}),\quad t>0
\end{align*}
where $e^{i\theta}=(e^{i\theta_{1}},...,e^{i\theta_{n}})\in\mathbb{T}^{n}$, $e^{i\nu}=(e^{i\nu_{1}},...,e^{i\nu_{n}})\in\mathbb{T}^{n}$, and
$$\theta,\nu\in C=\tilde{A}_{n}\cap\{\nu\in\mathbb{R}^{n}: -\pi\le \nu_{n}< \pi\}.$$
Moreover, $q^{*}_{t}(e^{i\theta},e^{i\nu})$ can be expressed as the sum of $n$ determinants:
\begin{align}\label{sumdet}
q^{*}_{t}(e^{i\theta},e^{i\nu})=\frac{1}{n}\sum_{u=0}^{n-1}\det(\sum_{k\in\mathbb{Z}}\eta^{uk}p_{t}(\theta_{i},\nu_{j}+2\pi k))_{i,j=1}^{n}.
\end{align}
\end{cor}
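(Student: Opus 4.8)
The plan is to deduce both assertions from the Hobson--Werner identity (\ref{cir}), which already identifies $q_{t}(\theta,\nu)$, for $\theta,\nu\in\tilde{A}_{n}$, with the transition density of the $n$-dimensional Brownian motion $B=(B_{1},\ldots,B_{n})$ killed at its first exit from the alcove $\tilde{A}_{n}$. The only genuinely new input is a geometric reduction from the circle model to this confined Brownian motion, after which the passage to a sum of determinants is the root-of-unity manipulation already carried out in the proof of Corollary \ref{grand}.

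First I would set up the reduction. The labelled particles $\beta_{k}=e^{iB_{k}}$ first collide exactly when $B$ first hits one of the hyperplanes $\{x_{i}-x_{j}\in 2\pi\mathbb{Z}\}$, $i\neq j$, and the connected components of the complement of this arrangement are precisely the alcoves, $\tilde{A}_{n}$ being the one containing the starting point $\theta\in C$. Hence $\beta=(\beta_{1},\ldots,\beta_{n})$ killed at its first self-intersection is the image under $z\mapsto e^{iz}$ of $B$ killed on exit from $\tilde{A}_{n}$, so that $q^{*}_{t}(e^{i\theta},e^{i\nu})$ is obtained by pushing $q_{t}(\theta,\cdot)$ forward along this (local isometry) covering map, i.e.\ by summing $q_{t}(\theta,\tilde\nu)$ over all lifts $\tilde\nu\in\tilde{A}_{n}$ of $e^{i\nu}$. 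Among all lifts $\nu+2\pi m$ of $e^{i\nu}$ ($m\in\mathbb{Z}^{n}$), those lying in $\tilde{A}_{n}$ are exactly the diagonal translates of $\nu$: the defining inequalities of $\tilde{A}_{n}$ force $m_{1}\geq m_{2}\geq\cdots\geq m_{n}$ (each gap $\nu_{i}-\nu_{i+1}$ lies in $(0,2\pi)$) and then $m_{1}=m_{n}$ (because $\nu_{1}-\nu_{n}<2\pi$), whence $m=(c,c,\ldots,c)$ with $c\in\mathbb{Z}$. Consequently $q^{*}_{t}(e^{i\theta},e^{i\nu})=\sum_{c\in\mathbb{Z}}q_{t}(\theta,\nu+2\pi c(1,\ldots,1))$, the sum being absolutely convergent by the Gaussian decay of $p_{t}$.

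Next I would insert (\ref{cir}) and re-index: writing $k_{i}=c+k_{i}'$, the inner constraint $\sum_{i}k_{i}'=0$ becomes $\sum_{i}k_{i}=nc$, so as $c$ ranges over $\mathbb{Z}$ and the $k_{i}'$ over the sum-zero hyperplane the $k_{i}$ range over all of $\mathbb{Z}^{n}$ with $\sum_{i}k_{i}\equiv 0\bmod n$; this gives the first displayed formula of the corollary. To obtain the sum of $n$ determinants I would then follow the proof of Corollary \ref{grand} verbatim: multiply by the identity $\frac{1}{n}\sum_{u=0}^{n-1}\eta^{u\sum_{i}k_{i}}$ (with $\eta=e^{2\pi i/n}$), which equals $1$ when $\sum_{i}k_{i}\equiv 0\bmod n$ and $0$ otherwise, interchange the finite sums over $u$ and $\sigma\in S_{n}$ with the absolutely convergent sum over $k\in\mathbb{Z}^{n}$, and recombine $\prod_{i}\bigl(\sum_{k\in\mathbb{Z}}\eta^{uk}p_{t}(\theta_{i},\nu_{j}+2\pi k)\bigr)$ into a determinant, arriving at (\ref{sumdet}).

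I expect the main obstacle to be the geometric reduction of the second paragraph --- checking carefully that ``non-intersecting on the circle'' corresponds to confinement in the single alcove $\tilde{A}_{n}$, and pinning down precisely which lifts of $e^{i\nu}$ survive --- since that is where the ``$\bmod\ n$'' condition, and hence the hidden cyclic structure of the model, enters. The remaining two steps are, respectively, an appeal to (\ref{cir}) and the verbatim repetition of the root-of-unity calculation from Corollary \ref{grand}, both routine.
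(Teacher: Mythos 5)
Your proposal is correct and follows essentially the same route as the paper: the paper also obtains the first identity by summing (\ref{cir}) over the diagonal translates $\nu+2\pi\ell(1,\ldots,1)$, $\ell\in\mathbb{Z}$, and then deduces (\ref{sumdet}) from the root-of-unity identity exactly as in Corollary \ref{grand}. Your additional checks (that the only lifts of $e^{i\nu}$ lying in $\tilde{A}_{n}$ are the diagonal translates, and the re-indexing $k_{i}=c+k_{i}'$ turning $\sum_i k_i'=0$ into $\sum_i k_i\equiv 0 \bmod n$) simply spell out what the paper treats as immediate.
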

\begin{proof}
Since any point in the circle is the projection of an infinite set of points in the real line modulo $2\pi$, the first part follows immediately by summing up in (\ref{cir}) over all the images of $\nu=(\nu_{1},...,\nu_{n})\in\tilde{A}_{n}$ under translations of $2\pi$, that is
$$q^{*}_{t}(e^{i\theta},e^{i\nu})=\sum_{\ell\in\mathbb{Z}}q_{t}(\theta,\nu+2\pi\ell(1,...,1)).$$
For the second part, if $\eta=e^{i\frac{2\pi}{n}}$ is a complex root of unity, we can eliminate the condition $k_1+k_2+...+k_n=0$, mod $n$, by using the identity
\begin{align*}
\frac{1}{n}\sum_{u=0}^{n-1}\eta^{u\sum_{i=1}^{n} k_{i}}=\left\{
	\begin{array}{ll}
		1  & \mbox{if } \sum_{i=1}^{n}k_{i}=0,\,\,\text{mod}\,n \\
		0 & \mbox{otherwise},
	\end{array}
\right.
\end{align*}
and (\ref{sumdet}) follows.
\end{proof}

Interestingly, if we do not label the $n$ Brownian particles in Corollary \ref{zxc} (and therefore the locations at time $t>0$ are given by any of the $n$ cyclic permutations of the vector $(e^{i\nu_{1}},...,e^{i\nu_{n}})$ along the circle), then the corresponding transition density becomes a single determinant:

\begin{proposition}\label{indistinguishable}
The (unnormalised) transition density of $n$ `indistinguishable' non-intersecting Brownian motions on the circle is given by
\begin{align}\label{singledet}
H_{t}(e^{i\theta},e^{i\nu})=\det(\sum_{k\in\mathbb{Z}}e^{i2\pi xk} p_{t}(\theta_{i},\nu_{j}+2\pi k))_{i,j=1}^{n},\quad \theta,\nu\in C,
\end{align}
where 
$$
x= \left\{
        \begin{array}{ll}
            0 & \quad \text{if $n$ is odd}, \\
            \frac{1}{2} & \quad \text{if $n$ is even}.
        \end{array}
    \right.
$$

\end{proposition}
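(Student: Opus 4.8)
The plan is to transfer the argument that takes Corollary~\ref{grand} to Proposition~\ref{Fominaff3} into the present diffusive setting, with the role of Corollary~\ref{grand} now played by the $n$-determinant identity \eqref{sumdet} of Corollary~\ref{zxc}.

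\emph{Step 1: forgetting the labels.} First I would justify that the indistinguishable density is the sum of the labelled density $q^*_t$ over the $n$ cyclic relabellings of the terminal points. Since the $n$ Brownian particles do not collide, their cyclic order on the circle is preserved, so an unordered terminal configuration $\{e^{i\nu_1},\dots,e^{i\nu_n}\}$ with $\nu\in C$ can be reached from the ordered start $e^{i\theta}$, $\theta\in C$, only through a label-to-target matching equal to one of the cyclic shifts $[\ell]$, $\ell=0,\dots,n-1$, of Proposition~\ref{Fominaff3}. Writing the $\ell$-th terminal configuration in its own $\tilde A_n$-coordinates, namely $\nu_{[\ell]}+2\pi\,\mathbf{k}_\ell$ with $\mathbf{k}_\ell$ as in \eqref{cyc}, and noting that this reparametrisation has unit Jacobian, one gets
\[
H_t(e^{i\theta},e^{i\nu})=\sum_{\ell=0}^{n-1}q^*_t\bigl(e^{i\theta},e^{i(\nu_{[\ell]}+2\pi\mathbf{k}_\ell)}\bigr).
\]

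\emph{Step 2: each summand as a sum of determinants.} I would then substitute \eqref{sumdet} into every summand and track the effect of the cyclic shift on the matrices $M^{(u)}=\bigl(\sum_{k\in\mathbb Z}\eta^{uk}p_t(\theta_i,\nu_j+2\pi k)\bigr)_{i,j=1}^n$. Re-indexing $k\mapsto k-(\mathbf{k}_\ell)_j$ in the $j$-th column shows that passing from $\nu$ to $\nu_{[\ell]}+2\pi\mathbf{k}_\ell$ scales the $(i,j)$-entry of $M^{(u)}$ by $\eta^{-u(\mathbf{k}_\ell)_j}$ and permutes its columns by $[\ell]$; taking determinants and using $\sum_j(\mathbf{k}_\ell)_j=\ell$ yields
\[
q^*_t\bigl(e^{i\theta},e^{i(\nu_{[\ell]}+2\pi\mathbf{k}_\ell)}\bigr)=\frac1n\sum_{u=0}^{n-1}\eta^{-\ell u}\,{\rm sgn}([\ell])\,\det M^{(u)},
\]
which is the exact analogue of the identity for $\mathcal G(\mathbf a,\mathcal S^{k_\ell}\mathbf b_{[\ell]})$ used in the proof of Proposition~\ref{Fominaff3}.

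\emph{Step 3: the root-of-unity filter.} Summing over $\ell$ leaves $H_t=\sum_{u=0}^{n-1}\bigl(\tfrac1n\sum_{\ell=0}^{n-1}{\rm sgn}([\ell])\eta^{-\ell u}\bigr)\det M^{(u)}$, and I would finish exactly as in Cases~1 and~2 of Proposition~\ref{Fominaff3}. For $n$ odd, ${\rm sgn}([\ell])=1$, so the inner sum is $n$ if $u=0$ and $0$ otherwise; hence $H_t=\det M^{(0)}$, i.e. \eqref{singledet} with $x=0$. For $n$ even, ${\rm sgn}([\ell])=(-1)^\ell=\eta^{\ell n/2}$, so the inner sum is $n$ if $u=n/2$ and $0$ otherwise; hence $H_t=\det M^{(n/2)}$, and since $\eta^{n/2}=-1=e^{i\pi}$ this is \eqref{singledet} with $x=\tfrac12$.

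The step I expect to cost the most care is Step~1: arguing cleanly that discarding the labels amounts precisely to the displayed sum over the $n$ cyclic shifts (this is where non-collision of the particles, hence preservation of cyclic order, is used), and checking that each chosen representative $\nu_{[\ell]}+2\pi\mathbf{k}_\ell$ lies in $\tilde A_n$ so that Corollary~\ref{zxc} applies to it --- after, if necessary, a global translation of all coordinates by a multiple of $2\pi$, which is harmless because it multiplies every column of $M^{(u)}$ by the same power of $\eta$ and $\eta^n=1$. Everything past that point is the same determinant bookkeeping and character-sum evaluation already carried out for Corollary~\ref{grand} and Proposition~\ref{Fominaff3}.
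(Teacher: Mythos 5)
Your proposal is correct and follows essentially the same route as the paper: sum the labelled density $q_t^*$ of Corollary \ref{zxc} over the $n$ cyclic permutations of the terminal configuration, express each summand via the identity $q_{t}^{*}(e^{i\theta},e^{i\nu_{[\ell]}})=\tfrac1n\sum_{u}\eta^{-\ell u}\,\mathrm{sgn}([\ell])\det M^{(u)}$, and apply the root-of-unity filter exactly as in Proposition \ref{Fominaff3}. Your Steps 1--2 merely spell out details (cyclic order preservation, the column reindexing and the harmless global $2\pi$ translation) that the paper leaves implicit.
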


\begin{remark}
In particular, Proposition \ref{indistinguishable} recovers the Karlin-McGregor (for $n$ odd) and Liechty-Wang (for $n$ even) determinant 
 formulas \cite{Karlin,Liechty}, for the transition density of $n$ {\em indistinguishable} non-intersecting Brownian motions on the circle.
\end{remark}

\begin{remark}
Using modular transformations for Jacobi theta functions, the entries of the matrix in (\ref{singledet}) can be written 
in terms of theta functions as follows:
$$
 \left\{
        \begin{array}{ll}
            \frac{1}{2\pi}\,\theta_{3}\left(-\frac{(\nu_{j}-\theta_{i})}{2},e^{-t/2}\right) & \quad \text{if $n$ is odd}, \\
           \frac{1}{2\pi}\,\theta_{4}\left(-\frac{(\nu_{j}-\theta_{i})}{2},e^{-t/2}\right)  & \quad \text{if $n$ is even},
        \end{array}
    \right.
$$
where $\theta_{k}(z,q)$ is the $k$-Jacobi theta function, $z\in\mathbb{C}$, $|q|<1$ (see \cite{DLMF}).
\end{remark}

\begin{proof}[Proof of Proposition \ref{indistinguishable}]
The method of proof is by summing-up, in (\ref{sumdet}), the $n$ different destinations of the labelled process of Corollary \ref{zxc}. Fix $\theta\in C$ and $\nu\in C$. If $[\ell]\in S_{n}$ is the shift by $\ell=0,1,...,n-1$, let $\nu_{[\ell]}$ be the unique representative of $(\nu_{[\ell](1)},...,\nu_{[\ell](n)})$ in $C$. Then, the $n$ different `cyclic permutations' of the vector $e^{i\nu}=(e^{i\nu_{1}},...,e^{i\nu_{n}})\in\mathbb{T}^{n}$ along the unit circle are given by
\begin{align*}
e^{i\nu_{[\ell]}},\quad\ell=0,1,...,n-1.
\end{align*}
With the notation of Corollary \ref{zxc}, it holds that
\begin{align*}
q_{t}^{*}(e^{i\theta},e^{i\nu_{[\ell]}})=\frac{1}{n}\sum_{u=0}^{n-1}\eta^{-\ell u}{\rm sgn}(\sigma)\det(\sum_{k\in\mathbb{Z}}\eta^{uk}p_{t}(\theta_{i},\nu_{j}+2\pi k))_{i,j=1}^{n}.
\end{align*}
Finally, following the same argument as in the proof of Proposition \ref{Fominaff3}, we obtain
\begin{align*}
\sum_{\ell=0}^{n-1}q_{t}^{*}(e^{i\theta},e^{i\nu_{[\ell]}})=\det(\sum_{k\in\mathbb{Z}}e^{i2\pi xk} p_{t}(\theta_{i},\nu_{j}+2\pi k))_{i,j=1}^{n},
\end{align*}
where 
$$
x= \left\{
        \begin{array}{ll}
            0 & \quad \text{if $n$ is odd}, \\
            \frac{1}{2} & \quad \text{if $n$ is even}.
        \end{array}
    \right.
$$

\end{proof}

Following the notation of Proposition \ref{indistinguishable}, consider now the normalised density 
$$\tilde{H}_{t}(e^{i\theta},e^{i\nu})=\frac{1}{M_{t,\theta}}H_{t}(e^{i\theta},e^{i\nu}),\quad\theta,\nu\in C,$$
where
$$M_{t,\theta}=\int_{C}H_{t}(e^{i\theta},e^{i\nu})d\nu .$$
The following proposition is essentially a reformulation of $(ii)$ and $(iii)$ of the main theorem in \cite{Werner}, here stated in the case of $n$ {\it indistinguishable} non-intersecting Brownian motions on the circle. We also take into consideration the corresponding normalisation constants.

\begin{proposition}\label{jac}
For any $\theta,\nu\in C$,
\begin{align}\label{golon}
\lim_{t\to\infty}\tilde{H}_{t}(e^{i\theta},e^{i\nu})&=\frac{1}{M}\prod_{1\leq i<j\leq n}|e^{i\nu_{j}}-e^{i\nu_{i}}|,
\end{align}
where 
$$\lim_{t\to\infty}M_{t,\theta}=M,$$
and $M$ is the corresponding normalisation constant in the right hand side of (\ref{golon}).
\end{proposition}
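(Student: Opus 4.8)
The plan is to read off the $t\to\infty$ asymptotics of the explicit determinant of Proposition~\ref{indistinguishable} by passing to the small-nome (Fourier) expansion of its theta-function entries and then applying the Cauchy--Binet formula; throughout, products over $i,j$ run over $1\le i<j\le n$. First I would Poisson-sum each matrix entry (this is Jacobi's imaginary transformation, i.e.\ the modular transformation mentioned in the Remark after Proposition~\ref{indistinguishable}): for $x\in\{0,\tfrac12\}$,
\[
\sum_{k\in\mathbb Z}e^{i2\pi xk}\,p_t(\theta_i,\nu_j+2\pi k)=\frac{1}{2\pi}\sum_{m\in\mathbb Z}e^{-\frac t2(m-x)^2}\,e^{\,i(m-x)(\nu_j-\theta_i)} .
\]
Thus the matrix in \eqref{singledet} equals $\tfrac1{2\pi}\sum_{m}c_m^{(t)}\bigl(u_m(i)\,v_m(j)\bigr)_{i,j}$ with $c_m^{(t)}=e^{-t(m-x)^2/2}$, $u_m(i)=e^{-i(m-x)\theta_i}$, $v_m(j)=e^{i(m-x)\nu_j}$ (all of modulus $1$), a norm-convergent sum of rank-one matrices, and Cauchy--Binet gives
\[
H_t(e^{i\theta},e^{i\nu})=\frac{1}{(2\pi)^n}\sum_{\substack{S\subset\mathbb Z\\|S|=n}}\Bigl(\prod_{m\in S}c_m^{(t)}\Bigr)\,\det\!\bigl(u_m(i)\bigr)_{i,\,m\in S}\,\det\!\bigl(v_m(j)\bigr)_{j,\,m\in S} .
\]

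Second, I would isolate the dominant term. Since $\prod_{m\in S}c_m^{(t)}=\exp\!\bigl(-\tfrac t2\sum_{m\in S}(m-x)^2\bigr)$, the $t\to\infty$ behaviour is governed by the $S$ minimising $E(S):=\sum_{m\in S}(m-x)^2$. The crucial combinatorial fact, and the only place the parity of $n$ enters, is that in both relevant cases ($x=0$ for $n$ odd, $x=\tfrac12$ for $n$ even) this minimiser is \emph{unique}, namely $S^{\ast}=\{m\in\mathbb Z:|m-x|<\tfrac n2\}$, for which the shifted exponents $\{m-x:m\in S^{\ast}\}=\{-\tfrac{n-1}{2},-\tfrac{n-1}{2}+1,\dots,\tfrac{n-1}{2}\}$ are symmetric about $0$ (an exchange argument: $m\mapsto(m-x)^2$ is unimodal on $\mathbb Z$ and every level set off the minimum has exactly two elements, so the $n$ smallest values form exactly this consecutive block). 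For $S^{\ast}$ the two Cauchy--Binet factors are generalised Vandermonde determinants whose overall phase cancels precisely because the exponents are symmetric about $0$:
\[
\det\!\bigl(v_m(j)\bigr)_{j,\,m\in S^{\ast}}=(2i)^{\binom{n}{2}}\prod_{i<j}\sin\tfrac{\nu_j-\nu_i}{2},
\]
and likewise $\det(u_m(i))_{i,\,m\in S^{\ast}}=(2i)^{\binom{n}{2}}\prod_{i<j}\sin\tfrac{\theta_i-\theta_j}{2}$. On the chamber $C\subset\tilde A_n$ each $\sin\tfrac{\nu_j-\nu_i}{2}$ with $i<j$ keeps a fixed sign, so $\prod_{i<j}|\sin\tfrac{\nu_j-\nu_i}{2}|=2^{-\binom{n}{2}}\prod_{i<j}|e^{i\nu_j}-e^{i\nu_i}|$, and the two factors $(2i)^{\binom{n}{2}}$ combine (with the sign from the sines) to a positive real; thus the $S^{\ast}$-term equals $\tfrac1{(2\pi)^n}e^{-\lambda_1 t/2}\prod_{i<j}|e^{i\theta_i}-e^{i\theta_j}|\cdot\prod_{i<j}|e^{i\nu_j}-e^{i\nu_i}|$, with $\lambda_1=\sum_{m\in S^{\ast}}(m-x)^2=\sum_{k=0}^{n-1}(k-\tfrac{n-1}{2})^2$ (which, as a sanity check, is the bottom Dirichlet eigenvalue of $-\tfrac12\Delta$ on $\tilde A_n$, with ground state $\prod_{i<j}\sin\tfrac{\nu_i-\nu_j}{2}$).

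Third, I would control the remainder uniformly and divide. Hadamard's inequality bounds $|\det(u_m)_S|,|\det(v_m)_S|\le n^{n/2}$ for every $S$, independently of $\theta,\nu$; and with $\delta:=\min_{S\ne S^{\ast}}E(S)-\lambda_1>0$ one has $\sum_{S\ne S^{\ast}}e^{-\frac t2 E(S)}\le K\,e^{-\frac t2(\lambda_1+\delta)}$ for $t\ge1$, since $\sum_{\text{all }S}e^{-\frac12 E(S)}<\infty$. Hence, uniformly on $C\times C$,
\[
H_t(e^{i\theta},e^{i\nu})=\frac{e^{-\lambda_1 t/2}}{(2\pi)^n}\prod_{i<j}|e^{i\theta_i}-e^{i\theta_j}|\;\Bigl[\,\prod_{i<j}|e^{i\nu_j}-e^{i\nu_i}|+\varepsilon_t(\theta,\nu)\Bigr],\qquad \sup_{C\times C}|\varepsilon_t|\to0 .
\]
As $C$ is bounded and $\prod_{i<j}|e^{i\theta_i}-e^{i\theta_j}|>0$ for $\theta\in C$, dividing by $M_{t,\theta}=\int_C H_t(e^{i\theta},e^{i\nu})\,d\nu$ cancels the $\nu$-independent prefactor $\tfrac1{(2\pi)^n}e^{-\lambda_1 t/2}\prod_{i<j}|e^{i\theta_i}-e^{i\theta_j}|$, and uniform convergence on the bounded set $C$ yields $\lim_{t\to\infty}\tilde H_t(e^{i\theta},e^{i\nu})=\tfrac1M\prod_{i<j}|e^{i\nu_j}-e^{i\nu_i}|$ with $M=\int_C\prod_{i<j}|e^{i\nu_j}-e^{i\nu_i}|\,d\nu$; the same display shows $M_{t,\theta}$ equals that prefactor times $M+o(1)$, which is the intended reading of $\lim_{t\to\infty}M_{t,\theta}=M$.

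The step I expect to be the main obstacle is the second one: proving uniqueness of the minimising subset $S^{\ast}$ and tracking the phases so that the leading Cauchy--Binet term collapses \emph{exactly} to the $\beta=1$ circular weight $\prod_{i<j}|e^{i\nu_j}-e^{i\nu_i}|$ — in particular checking that the odd/even dichotomy ($x=0$ versus $x=\tfrac12$, i.e.\ the $\zeta$ of \eqref{singledet}) is precisely what renders $\{m-x:m\in S^{\ast}\}$ symmetric about $0$ and hence makes the overall phase vanish. The uniform control of the error up to $\partial C$ is routine once Hadamard's bound is available. A shorter, higher-level alternative would be to recognise $H_t$ as a Dirichlet heat kernel on the affine alcove $\tilde A_n$ summed over its residual cyclic symmetry and to quote parts (ii)--(iii) of Hobson--Werner \cite{Werner}; since those parts are not reproduced in the text, the Poisson-summation computation above is the self-contained route.
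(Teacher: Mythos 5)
Your proposal follows essentially the same route as the paper's proof: Poisson summation on each theta-function entry, a Cauchy--Binet/Andreief expansion over increasing integer exponent sequences, identification of the unique minimising set of exponents (symmetric about $0$, with the odd/even dichotomy entering exactly through $x\in\{0,\tfrac12\}$), uniform control of the subleading terms, and passage to the limit after normalisation — the only cosmetic differences being that you evaluate the leading Vandermonde-type determinant explicitly via sines where the paper cites Mehta, and you use Hadamard's bound where the paper uses the trivial $(n!)^2$ bound. The argument is correct and matches the paper's proof in substance.
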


\begin{remark}
The above limit agrees with the eigenvalue density of the Circular Orthogonal 
Ensemble (COE), defined on $C=\tilde{A}_{n}\cap\{\nu\in\mathbb{R}^{n}: -\pi\le \nu_{n}< \pi\}$.
\end{remark}

\begin{proof}[Proof of Proposition \ref{jac}]
Using the Poisson summation formula for each entry of the matrix array in (\ref{singledet}), we have
\begin{align*}
\sum_{k\in\mathbb{Z}}e^{i2\pi xk} p_{t}(\theta_{i},\nu_{j}+2\pi k)=\frac{1}{2\pi}\sum_{k\in\mathbb{Z}}e^{-i(\nu_{j}-\theta_{i})(x+k)}e^{-\frac{t}{2}(x+k)^{2}}.
\end{align*}
Alternatively, the above can be seen as a direct consequence of the definitions by infinite series of the Jacobi's theta functions $\theta_{3}$ and $\theta_{4}$ (see second remark after Proposition \ref{indistinguishable}). Now, by standard properties of determinants we obtain
\begin{align*}
H_{t}(e^{i\theta},e^{i\nu})&=\frac{1}{(2\pi)^{n}}\sum_{\substack{k_{1}<k_{2}<...<k_{n}\\k_{i}\in\mathbb{Z}}}\det(e^{-i\nu_{j}(x+k_{i})})\det(e^{i\theta_{j}(x+k_{i})})g_{t}({\bf k}),
\end{align*}
where ${\bf k}=(k_{1},...,k_{n})$ and
\begin{align*}
g_{t}({\bf k})=\exp(-\frac{t}{2}\sum_{i=1}^{n}(x+k_{i})^{2}).
\end{align*}
Remember from (\ref{singledet}) that $x=0$ if $n$ is odd and $x=1/2$ if $n$ is even. Regarding the term $g_{t}({\bf k})$, note that over all sequences of integers $k_{1}<k_{2}<...<k_{n}$, we have
\begin{align*}
\min_{\substack{k_{i}\in\mathbb{Z}\\k_{1}<...<k_{n}}} \frac{1}{2}\sum_{i=1}^{n}(x+k_{i})^{2}=\frac{n(n-1)(n+1)}{24},
\end{align*}
and the minimum is attained uniquely at $k_{i}=k'_{i}$, $1\leq i\leq n$, where
\begin{align*}
x+k'_{i}=i-\frac{n+1}{2},\quad i=1,2,...,n.
\end{align*}
Therefore
\begin{align*}
H_{t}(e^{i\theta},e^{i\nu})=\frac{g_{t}({\bf k}')}{(2\pi)^{n}}\left( \det(e^{i\theta_{j}(x+k'_{i})})_{i,j=1}^{n} \det(e^{-i\nu_{j}(x+k'_{i})})_{i,j=1}^{n}+Q_{t}(\theta,\nu)\right),
\end{align*}
where $Q_{t}(\theta,\nu)$ satisfies 
\begin{align*}
|Q_{t}(\theta,\nu)|&\leq (n!)^2 \sum_{\substack{k_{1}<k_{2}<...<k_{n}\\{\bf k}\not={\bf k}'}}\frac{g_{t}({\bf k})}{g_{t}({\bf k}')}\\
&= (n!)^2 \sum_{\substack{k_{1}<k_{2}<...<k_{n}\\{\bf k}\not={\bf k}'}}e^{-t\left(\frac{1}{2}\sum_{i=1}^{n}(x+k_{i})^{2}-\frac{n(n-1)(n+1)}{24}\right)}.
\end{align*}
It is not difficult to check that $Q_{t}=o(1)$ uniformly in $\theta$ and $\nu$, as $t\to\infty$.  Furthermore, the normalised density $\tilde{H}_{t}(e^{i\theta},e^{i\nu})$ can be written as
\begin{align*}
\tilde{H}_{t}(e^{i\theta},e^{i\nu})=\left(\int_{C}F_{t}^{\theta}(\nu)d\nu\right)^{-1}F_{t}^{\theta}(\nu),
\end{align*}
where
\begin{align*}
F_{t}^{\theta}(\nu)=\det(e^{i\theta_{j}(x+k'_{i})})_{i,j=1}^{n} \det(e^{-i\nu_{j}(x+k'_{i})})_{i,j=1}^{n}+Q_{t}(\theta,\nu).
\end{align*}
For each fixed $\theta\in C$, 
$$\lim_{t\to\infty}F_{t}^{\theta}(\nu)=\det(e^{i\theta_{j}(x+k'_{i})})_{i,j=1}^{n} \det(e^{-i\nu_{j}(x+k'_{i})})_{i,j=1}^{n},\quad\forall\,\nu\in C,$$
and, moreover, for all $t>T$, $T>0$ sufficiently large, we have
$$|F_{t}^{\theta}(\nu)|\leq (n!)^{2}(1+K),\quad\forall\,\nu\in C,$$
where $K$ is a positive constant. Therefore, by the bounded convergence theorem, for any $\theta,\nu\in C$, it holds that
\begin{align*}
\lim_{t\to\infty}\tilde{H}_{t}(e^{i\theta},e^{i\nu})&=\frac{1}{\int_{C}\det(e^{-i\nu_{j}(x+k'_{i})})d\nu}\det(e^{-i\nu_{j}(x+k'_{i})})_{i,j=1}^{n}\\
 &=\frac{1}{M}\prod_{1\leq i<j\leq n}|e^{i\nu_{j}}-e^{i\nu_{i}}|,
\end{align*}
where $M$ is the corresponding normalisation. For the last equality, see \cite[p208]{Mehta}.

\end{proof}

\subsection{Brownian motion in an annulus.}
\label{secannulus}
Let $0<r<1$ and $\Omega$ be the annulus centered at the origin defined by
\begin{align*}
\Omega=\{z\in\mathbb{C}: r<|z|< 1\}.
\end{align*}
Consider the `Brownian motion' $B$ in $\overline{\Omega}$, with normal reflection on the inner circle 
(of radius $r$), and stopped once it first hits the unit circle. The conformal invariance of the two-dimensional Brownian motion allows us to see the trajectories of $B$ as the conformal image of a `Brownian motion' $\beta$ in the horizontal strip
\begin{align*}
\Omega'=\{z\in\mathbb{C}: 0<\mathfrak{Im}(z)< |\log r|\},
\end{align*}
with normal reflection on the real axis and absorbing boundary $\mathfrak{Im}(z)=|\log r|$, see Figure \ref{pylopac}. From Section \ref{bmstrip}, we know that if the process $\beta$ starts at a point $\theta\in\mathbb{R}$, then the distribution of its first hitting point at $\mathfrak{Im}(z)=|\log r|$ has the density
\begin{align*}
h(\theta,\nu)=\frac{1}{2|\log r|}\sech(\frac{\pi}{2|\log r|}(\nu-\theta)),\quad \nu\in\mathbb{R}.
\end{align*} 

\begin{figure}[tb]
%\blankbox{.6\columnwidth}{5pc}
\includegraphics[scale=.86]{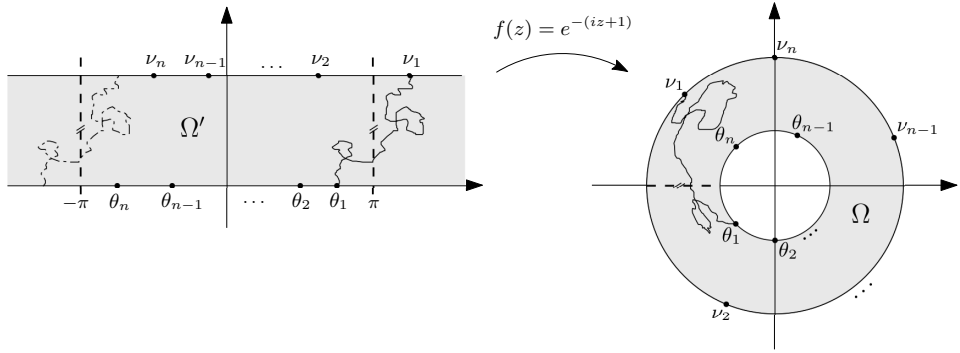}
\caption{Mapping the strip onto the annulus}
\label{pylopac}
\end{figure}

Consider the bounded set 
$$C=\tilde{A}_{n}\cap\{\nu\in\mathbb{R}^{n}: -\pi\le \nu_{n}< \pi\},$$ 
where
$$\tilde{A}_{n}:=\{\nu\in\mathbb{R}^{n} : \nu_{n}<\nu_{n-1}<...<\nu_{2}<\nu_{1}<\nu_{n}+2\pi\}.$$

\begin{definition}\label{newdefi}
Let $\eta=e^{i\frac{2\pi}{n}}$ be the $n$-th root of unity. Define, for $\theta,\nu\in C$,
\begin{align}\label{defin}
H^{a}_{r}(e^{i\theta},e^{i\nu})=\det(\sum_{k\in\mathbb{Z}}e^{i2\pi xk}h(\theta_{i},\nu_{j}+2\pi k))_{i,j=1}^{n},
\end{align}
where 
$$
x= \left\{
        \begin{array}{ll}
            0 & \quad \text{if $n$ is odd}, \\
            \frac{1}{2} & \quad \text{if $n$ is even}.
        \end{array}
    \right.
$$
\end{definition}

\begin{remark}
The strip $\Omega'\subset\mathbb{C}$ is clearly invariant under horizontal translations by $2\pi k$, $k\in\mathbb{Z}$, and therefore the determinant (\ref{defin}) is a determinant of hitting densities of the affine form (\ref{Poissonkernel24}), described at the beginning of Section \ref{rmt}. Since (\ref{defin}) is defined as a natural continuous analogue of the determinant in Proposition \ref{Fominaff3}, we expect the determinant $H_{r}^{a}(e^{i\theta},e^{i\nu})$ to be positive and be interpreted (informally) as the probability that $n$ independent trajectories $B_{1},...,B_{n}$ of the process $B$ in the annulus $\overline{\Omega}$, starting at positions
\begin{align*}
re^{i\theta_{j}},\quad j=1,...,n,
\end{align*}
will hit the unit circle by first time at points
\begin{align*}
e^{i\nu_{j}},\quad j=1,...,n,
\end{align*}
with an angle in each of the intervals $(\nu_{j},\nu_{j}+d\nu_{j})$, $j=1,...,n$, and whose trajectories are constrained to satisfy
\begin{align*}
B_{j}\cap LE(B_{j-1})=\emptyset,\quad 1< j\leq n,\quad\text{and}\quad B_{1}\cap LE(B_{n})=\emptyset.
\end{align*}
Note that we do {\it not} require that the trajectory which started at point $re^{i\theta_{j}}$ hits the unit circle at the corresponding point $e^{i\nu_{j}}$. 
\end{remark}

\begin{remark}
If $n$ is odd, the entries of the matrix in (\ref{defin}) can be written as
$$
\frac{1}{2\pi}\,\theta_{3}(0,r)\theta_{2}(0,r)\frac{\theta_{3}\left(\frac{i\pi}{2|\log r|}(\nu_{j}-\theta_{i}),r\right)}{\theta_{2}\left(\frac{i\pi}{2|\log r|}(\nu_{j}-\theta_{i}),r\right)},
$$
where $\theta_{k}(z,q)$ is the $k$-Jacobi theta function, $z\in\mathbb{C}$, $|q|<1$ (see \cite{DLMF}).
\end{remark}

Consider the normalised density
$$\tilde{H}^{a}_{r}(e^{i\theta},e^{i\nu})=\frac{1}{M_{r,\theta}}H^{a}_{r}(e^{i\theta},e^{i\nu}),$$
where
$$M_{r,\theta}=\int_{C}H^{a}_{r}(e^{i\theta},e^{i\nu})d\nu.$$
The following proposition gives the limit of $\tilde{H}^{a}_{r}(e^{i\theta},e^{i\nu})$ as the inner radius $r$ goes to zero. This models the situation where the $n$ Brownian motions start at the origin of the complex plane.

\begin{proposition}\label{COE}
For any $\theta,\nu\in C$, 
\begin{align}\label{finall}
\lim_{r\to0}\tilde{H}^{a}_{r}(e^{i\theta},e^{i\nu})=\frac{1}{M}\prod_{1\leq i<j\leq n}|e^{i\nu_{i}}-e^{i\nu_{j}}|,
\end{align}
where $$\lim_{r\to0}M_{r,\theta}=M,$$ 
and $M$ is the corresponding normalisation constant in the right hand side of (\ref{finall}).
\end{proposition}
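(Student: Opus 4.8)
The plan is to mimic the proof of Proposition~\ref{jac} almost line by line, replacing the Gaussian heat kernel $p_t$ by the hyperbolic-secant hitting kernel $h$ of the strip $\Omega'$, and replacing the regime $t\to\infty$ by $L:=|\log r|\to\infty$ (that is, $r\to0$). The only genuinely new ingredient is a Fourier-transform computation showing that $h$, after Poisson summation, carries a weight with the same favourable profile as the Gaussian weight in the circular case.

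First I would apply the Poisson summation formula to each entry of the matrix in~(\ref{defin}). Since $s\mapsto\frac{1}{2L}\sech\big(\frac{\pi s}{2L}\big)$ is a Schwartz function whose Fourier transform is $\xi\mapsto\sech(L\xi)$, the twisted sum becomes
\begin{align*}
\sum_{k\in\mathbb{Z}}e^{i2\pi xk}h(\theta_i,\nu_j+2\pi k)=\frac{1}{2\pi}\sum_{m\in\mathbb{Z}}\sech\big(L(m+x)\big)\,e^{-i(\nu_j-\theta_i)(m+x)},
\end{align*}
which is the exact analogue of the identity used in the proof of Proposition~\ref{jac}, with $\sech(L(m+x))$ playing the role of $e^{-\frac{t}{2}(k+x)^2}$. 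Expanding the determinant by multilinearity (as in the proof of Proposition~\ref{jac}) I obtain
\begin{align*}
H^a_r(e^{i\theta},e^{i\nu})=\frac{1}{(2\pi)^n}\sum_{\substack{m_1<\cdots<m_n\\ m_l\in\mathbb{Z}}}\Big(\prod_{l=1}^n\sech(L(m_l+x))\Big)\det\big(e^{i\theta_j(m_l+x)}\big)\det\big(e^{-i\nu_j(m_l+x)}\big).
\end{align*}

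Next I would isolate the dominant term as $L\to\infty$. Since $\sech(Lu)$ decays like $2e^{-L|u|}$ for $u\neq0$ as $L\to\infty$ (and $\sech(0)=1$), for large $L$ the dominant $n$-subset in the above sum is the one minimising $\sum_l|m_l+x|$ over $n$-subsets of $\mathbb{Z}+x$. Exactly as in Proposition~\ref{jac}, this minimiser is unique and equals the tuple $\mathbf{m}'$ with $m'_l+x=l-\tfrac{n+1}{2}$, $1\le l\le n$ (the $n$ elements of $\mathbb{Z}+x$ nearest the origin; recall $x=0$ for $n$ odd and $x=\tfrac12$ for $n$ even). The scalar $\frac{1}{(2\pi)^n}\prod_l\sech(L(m'_l+x))$ depends on $r$ and $\theta$ but not on $\nu$, hence cancels in $\tilde H^a_r$, and writing
\begin{align*}
\tilde H^a_r(e^{i\theta},e^{i\nu})=\Big(\int_C F^\theta_r(\nu')\,d\nu'\Big)^{-1}F^\theta_r(\nu),\qquad F^\theta_r(\nu)=\det\big(e^{i\theta_j(m'_l+x)}\big)\det\big(e^{-i\nu_j(m'_l+x)}\big)+R_r(\theta,\nu),
\end{align*}
where $R_r$ collects the terms with $\mathbf{m}\neq\mathbf{m}'$ — each bounded in modulus by $(n!)^2\prod_l\sech(L(m_l+x))/\prod_l\sech(L(m'_l+x))$ — one checks, using that $\sum_l|m_l+x|$ exceeds its minimal value by at least a fixed positive gap for every $\mathbf{m}\neq\mathbf{m}'$, that the series defining $R_r$ is summable and that $R_r\to0$ uniformly in $\theta,\nu\in C$ as $r\to0$.

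Finally, $\det\big(e^{i\theta_j(m'_l+x)}\big)$ is a nonzero Vandermonde-type constant for $\theta\in C$, so it cancels between numerator and denominator; since $C$ is bounded and $F^\theta_r$ is uniformly bounded for $r$ small, the bounded convergence theorem yields
\begin{align*}
\lim_{r\to0}\tilde H^a_r(e^{i\theta},e^{i\nu})=\frac{\det\big(e^{-i\nu_j(m'_l+x)}\big)_{l,j=1}^n}{\int_C\det\big(e^{-i\nu'_j(m'_l+x)}\big)_{l,j=1}^n\,d\nu'}=\frac{1}{M}\prod_{1\le i<j\le n}|e^{i\nu_i}-e^{i\nu_j}|,
\end{align*}
the last equality being the same computation as in the proof of Proposition~\ref{jac} (cf.~\cite[p.~208]{Mehta}), and the convergence $M_{r,\theta}\to M$ being read off in the same way. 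I expect the step requiring the most care to be precisely this dominant-term estimate: one must confirm that the Poisson-summed weight $\sech(L(m+x))$ has the ``peaked at $0$, exponentially decaying away from $0$'' profile needed for the uniqueness of the minimiser and for the uniform vanishing of $R_r$ — after which the argument is, step for step, a transcription of the proof of Proposition~\ref{jac}.
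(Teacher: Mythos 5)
Your proposal is correct and follows essentially the same route as the paper's proof: the Poisson-summation identity you derive is exactly Lemma \ref{copy1}, and the subsequent Cauchy--Binet expansion, identification of the unique minimiser $m'_l+x=l-\tfrac{n+1}{2}$ of $\sum_l|m_l+x|$, uniform $o(1)$ bound on the remainder via the ratio of $\sech$-weights (the paper uses $\sech(|\log r|u)\le 2r^{|u|}$), bounded convergence, and the final appeal to \cite[p.~208]{Mehta} are step for step the argument given for Proposition \ref{COE}, in parallel with Proposition \ref{jac}.
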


\begin{remark}
The above limit agrees with the eigenvalue density of a random matrix belonging to the Circular Orthogonal 
Ensemble (COE), defined on $C$ \cite{Mehta, Forrester}.
\end{remark}

\begin{proof}[Proof of Proposition \ref{COE}]
By Lemma \ref{copy1} and standard properties of determinants, we can express (\ref{defin}) as the sum
\begin{align*}
H^{a}_{r}(e^{i\theta},e^{i\nu})&=\frac{1}{(2\pi)^{n}}\sum_{\substack{k_{1}<k_{2}<...<k_{n}\\k_{i}\in\mathbb{Z}}}\det(e^{-i\nu_{j}(x+k_{i})})\det(e^{i\theta_{j}(x+k_{i})})g_{r}({\bf k}),
\end{align*}
where ${\bf k}=(k_{1},...,k_{n})$ and
\begin{align*}
g_{r}({\bf k})=\prod_{i=1}^{n}\sech(|\log r|(x+k_{i})).
\end{align*}
Here $x=0$ if $n$ is odd and $x=1/2$ if $n$ is even. The terms $g_{r}({\bf k})$ are always positive and 
\begin{align*}
g_{r}({\bf k})\leq 2^{n}r^{\sum_{i=1}^{n}|x+k_{i}|}.
\end{align*}
If we minimise $\sum_{i=1}^{n}|x+k_{i}|$ over all sequences of integers $k_{1}<k_{2}<...<k_{n}$, we obtain
\begin{align*}
\min_{\substack{k_{i}\in\mathbb{Z}\\k_{1}<...<k_{n}}}\sum_{i=1}^{n}|x+k_{i}|=\frac{n^{2}-[n]}{4},\quad n\equiv[n]\in\{0,1\} \,\,\,\text{mod}\,2,
\end{align*}
and the minimum is attained uniquely at $k'_{i}=k_{i}$, $1\leq i\leq n$, where
\begin{align*}
x+k'_{i}=i-\frac{n+1}{2},\quad i=1,2,...,n.
\end{align*}
Hence, the function $H^{a}_{r}(e^{i\theta},e^{i\nu})$ can be expressed as
\begin{align*}
H_{t}^{a}(e^{i\theta},e^{i\nu})=\frac{g_{r}({\bf k}')}{(2\pi)^{n}}\left(\det(e^{i\theta_{j}(x+k'_{i})})_{i,j=1}^{n}\det(e^{-i\nu_{j}(x+k'_{i})})_{i,j=1}^{n}+Q'_{t}(\theta,\nu)\right),
\end{align*}
where $Q'_{t}(\theta,\nu)$ satisfies 
\begin{align*}
|Q'_{t}(\theta,\nu)|&\leq (n!)^{2}\sum_{\substack{k_{1}<k_{2}<...<k_{n}\\{\bf k}\not={\bf k}'}}\frac{g_{r}({\bf k})}{g_{r}({\bf k}')}\\
&\leq (n!)^{2}\,2^{n}\sum_{\substack{k_{1}<k_{2}<...<k_{n}\\{\bf k}\not={\bf k}'}}r^{\left(\sum_{i=1}^{n}|x+k_{i}|-\frac{n^{2}-[n]}{4}\right)}.
\end{align*}
One can check that $Q'_{t}=o(1)$ uniformly in $\theta$ and $\nu$, as $r\to0$. 
As in the proof of Proposition \ref{jac}, the bounded convergence theorem implies that, 
for each $\theta,\nu\in C$
\begin{align*}
\lim_{r\to0}\tilde{H}_{r}^{a}(e^{i\theta},e^{i\nu})&=\frac{1}{\int_{C}\det(e^{-i\nu_{j}(x+k'_{i})})d\nu}\det(e^{-i\nu_{j}(x+k'_{i})})_{i,j=1}^{n}\\
&=\frac{1}{M}\prod_{1\leq i<j\leq n}|e^{i\nu_{j}}-e^{i\nu_{i}}|,
\end{align*}
which concludes the proof. For the last identity, see \cite[p208]{Mehta}).

\end{proof}

\section{Conclusions}

We have developed connections between loop-erased walks in two dimensions and random matrices,
based on an identity of S. Fomin~\cite{Fomin}.  This complements earlier work of Sato and Katori~\cite{Katori},
where an example of this type of connection was exhibited in a slightly different context, as explained in
Sections \ref{crmt} and \ref{nnoepkd}.
These connections resemble the well-known relations between non-intersecting processes in one 
dimension and random matrices.  For two-dimensional Brownian motions in suitable simply connected 
domains, conditioned (in an appropriate sense) to satisfy a certain non-intersection condition, 
we obtain, in particular scaling limits, eigenvalue densities of Cauchy type.  

As a first step towards the consideration of non-simply connected domains, we have formulated and proved
an affine (circular) version of Fomin's identity.  Applying this in the context of independent Brownian motions
in an annulus, conditioned to satisfy a circular version of Fomin's non-intersection condition, 
we obtain, in a particular scaling limit, the circular orthogonal ensemble of random matrix theory.

Exploring relations between random matrices, SLE and related combinatorial models,
seems to be an interesting direction for future research.
We hope that our preliminary findings will motivate further developments in this direction.

\appendix

\section{}
\label{app}

For any $n$-tuple of complex numbers ${\bf x}=(x_{1},...,x_{n})$ let
\begin{align*}
\Delta({\bf x})=\det(x_{j}^{i-1})_{i,j=1}^{n}=\prod_{1\leq i<j\leq n}(x_{j}-x_{i}).
\end{align*}

\begin{lemma}\label{lemma}
Let $h_{i}$, $1\leq i\leq n$, be functions which are (complex) analytic at $x$.
Let ${\bf x}=(x,...,x)$ and ${\bf y}=(y_1,\ldots,y_n)$.  Then
\begin{align*}
\lim_{{\bf y}\to{\bf x}}\frac{1}{\Delta({\bf y}-{\bf x})}\det(h_{j}(y_i))_{i,j=1}^{n}=C\cdot\det\left(\partial^{i-1}_{x}h_{j}(x)\right)_{i,j=1}^n,
\end{align*}
where $C=\prod_{j=1}^{n-1}\frac{1}{j!}$.
\end{lemma}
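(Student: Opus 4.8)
The plan is to turn the statement into a confluent Cauchy--Binet expansion. Set $t_i:=y_i-x$ and note that $\Delta({\bf y}-{\bf x})=\prod_{i<j}(y_j-y_i)=\Delta({\bf y})$, so the denominator is simply the ordinary Vandermonde in the $t_i$. Since each $h_j$ is analytic at $x$, I would Taylor--expand each column, $h_j(y_i)=\sum_{k\ge 0}c_{kj}\,t_i^{k}$ with $c_{kj}:=h_j^{(k)}(x)/k!$, which exhibits the matrix $\bigl(h_j(y_i)\bigr)_{i,j=1}^{n}$ as the product $VC$ of the $n\times\infty$ confluent Vandermonde $V=(t_i^{k})_{1\le i\le n,\,k\ge 0}$ and the $\infty\times n$ coefficient matrix $C=(c_{kj})_{k\ge 0,\,1\le j\le n}$.

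Applying the Cauchy--Binet formula gives
\begin{align*}
\det\bigl(h_j(y_i)\bigr)_{i,j=1}^{n}=\sum_{0\le k_1<k_2<\cdots<k_n}\det\bigl(t_i^{k_l}\bigr)_{i,l=1}^{n}\,\det\bigl(c_{k_l,j}\bigr)_{l,j=1}^{n}.
\end{align*}
Each generalized Vandermonde $\det(t_i^{k_l})_{i,l}$ vanishes when two of the $t_i$ coincide, hence is divisible as a polynomial by $\Delta({\bf t})$, the quotient being the Schur polynomial $s_\lambda({\bf t})$ with $\lambda=(k_n-(n-1),\dots,k_2-1,k_1)$, a homogeneous polynomial of degree $|\lambda|=\sum_l k_l-\binom n2\ge 0$. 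The index tuple $(0,1,\dots,n-1)$ is the unique one with $|\lambda|=0$, and there $s_\lambda\equiv1$ while $\det(t_i^{l-1})_{i,l}=\Delta({\bf t})$. Dividing the displayed identity by $\Delta({\bf t})=\Delta({\bf y}-{\bf x})$, the $(0,1,\dots,n-1)$ summand contributes
\begin{align*}
\det\bigl(c_{l-1,j}\bigr)_{l,j=1}^{n}=\Bigl(\prod_{l=1}^{n}\tfrac1{(l-1)!}\Bigr)\det\bigl(\partial_x^{i-1}h_j(x)\bigr)_{i,j=1}^{n}=C\cdot\det\bigl(\partial_x^{i-1}h_j(x)\bigr)_{i,j=1}^{n},
\end{align*}
using $\prod_{l=1}^{n}1/(l-1)!=\prod_{j=1}^{n-1}1/j!$, while every other summand contributes $s_\lambda({\bf t})\det(c_{k_l,j})_{l,j}$ with $s_\lambda({\bf t})\to0$ as ${\bf t}\to0$.

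The delicate point, and the main obstacle, is interchanging the limit ${\bf y}\to{\bf x}$ with the now infinite sum. Fix a closed polydisc about $x$ inside the common domain of analyticity of $h_1,\dots,h_n$; Cauchy's estimates give $|c_{kj}|\le M\rho^{-k}$ with uniform $M,\rho>0$, and a crude bound on the number and size of semistandard tableaux of shape $\lambda$ gives $|s_\lambda({\bf t})|\le D_\lambda\|{\bf t}\|_\infty^{|\lambda|}$ with subexponential $D_\lambda$, so for $\|{\bf t}\|_\infty$ small the tail of the series is dominated, uniformly in ${\bf t}$, by a convergent series; dominated convergence then permits the termwise limit and finishes the proof. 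Alternatively, one may bypass the tail estimate by observing that ${\bf y}\mapsto\det(h_j(y_i))_{i,j}$ is analytic and antisymmetric in $y_1,\dots,y_n$, hence equals $\Delta({\bf y})$ times an analytic function $G$; then $\lim_{{\bf y}\to{\bf x}}\det(h_j(y_i))/\Delta({\bf y}-{\bf x})=G({\bf x})$, and $G({\bf x})$ is obtained from the expansion above since only the $\lambda=\varnothing$ term survives at ${\bf t}=0$.
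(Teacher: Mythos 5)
Your proof is correct, but it takes a genuinely different route from the one in the paper. The paper reduces the limit to a single approach path ${\bf y}={\bf x}+\varepsilon\delta$, $\delta=(n-1,\dots,0)$ (invoking the Weierstrass preparation theorem for this reduction), and then works with Newton forward differences: a triangular matrix factorization of $\bigl(D^{i-1}h_j(x)\bigr)$ converts the finite-difference determinant into $\det\bigl(h_j(x+(k-1)\varepsilon)\bigr)$, and the limit $\varepsilon\to0$ turns $D^{i-1}/\varepsilon^{i-1}$ into $\partial_x^{i-1}$, producing the constant $\prod_j 1/j!$ from $\Delta(\delta)$. You instead Taylor-expand the columns, apply Cauchy--Binet to the resulting confluent Vandermonde factorization, and identify each quotient $\det(t_i^{k_l})/\Delta({\bf t})$ as a Schur polynomial, so that only the $\lambda=\varnothing$ term survives as ${\bf t}\to0$; the constant $\prod_{l}1/(l-1)!$ appears from the Taylor coefficients rather than from $\Delta(\delta)$. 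Your treatment of the interchange of limit and infinite sum is sound (Cauchy estimates give $|c_{kj}|\le M\rho^{-k}$, and the crude bound $|s_\lambda({\bf t})|\le n^{|\lambda|}\|{\bf t}\|_\infty^{|\lambda|}$ already yields a geometrically dominated tail once $\|{\bf t}\|_\infty<\rho/n$, even though calling $D_\lambda$ subexponential is slightly optimistic), and your second variant---antisymmetry plus analyticity forces $\det(h_j(y_i))=\Delta({\bf y})G({\bf y})$ with $G$ analytic, so the limit is $G({\bf x})$---is the cleanest way to see that the limit exists unrestrictedly. Indeed, your argument establishes the limit along arbitrary approaches ${\bf y}\to{\bf x}$ with distinct coordinates, which is what the paper's appeal to Weierstrass preparation is meant to supply but does not spell out; what the paper's approach buys in exchange is an entirely elementary, finite computation with no convergence issues to address.
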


\begin{proof}
By the Weierstrass preparation theorem, it suffices to prove the statement for 
${\bf y}={\bf x}^{\varepsilon}$ with $\varepsilon\to0$, where
$${\bf x}^{\varepsilon}={\bf x}+\varepsilon\delta,\qquad\varepsilon>0,\qquad \delta=(n-1,n-2,...,0).$$
The statement now follows from arguments presented, for example, in \cite{Wenchang}.
This is given as follows.
Define the difference operator $D$ with increment $\varepsilon$ by
\begin{align*}
D^{0}h(x)=h(x),&\quad Dh(x)=h(x+\varepsilon)-h(x)\\
D^{i+1}h(x)&=D(D^{i}h(x)),\quad n\geq1.
\end{align*}
Then
\begin{align}\label{newton}
D^{i}h(x)=\sum_{k=0}^{i}(-1)^{i+k}{i\choose k}h(x+k\varepsilon),\quad i\geq0,
\end{align}
and the operators $\partial_{x}$ and $D$ are related through the identity
\begin{align*}
\lim_{\varepsilon\to0}\frac{D^{i}h(x)}{\varepsilon^{i}}=\partial^{i}_{x}h(x).
\end{align*}
Using the relation (\ref{newton}), we have the matrix decomposition:
\begin{align*}
\left[D^{i-1}h_{j}(x)\right]_{i,j=1}^{n}&=\left[\sum_{k=0}^{i-1}(-1)^{i-1+k}{i-1\choose k}h_{j}(x+k\varepsilon)\right]_{i,j=1}^{n}\\
&=\left[(-1)^{i-1+k-1}{i-1\choose k-1}\right]_{i,k=1}^{n}\left[h_{j}(x+(k-1)\varepsilon)\right]_{k,j=1}^{n}.
\end{align*}
Note that the matrix in the middle is lower triangular, so its determinant is the product of its diagonal entries and therefore
\begin{align*}
\det\left(D^{i-1}h_{j}(x)\right)_{i,j=1}^{n}=\det\left(h_{j}(x+(k-1)\varepsilon)\right)_{k,j=1}^{n}.
\end{align*}
As a consequence, we obtain the following identity:
\begin{align*}
\det(\partial_{x}^{i-1}h_{j}(x))_{i,j=1}^{n}&=\lim_{\varepsilon\to0}\varepsilon^{-{n\choose2}}\det\left(h_{j}(x+(i-1)\varepsilon)\right)_{i,j=1}^{n}.
\end{align*}
Finally, note that 
$$\Delta({\bf x}^{\varepsilon}-{\bf x})=\Delta(\varepsilon\delta)=\varepsilon^{{n\choose 2}}\Delta(\delta)=(-\varepsilon)^{{n\choose 2}}\prod_{j=1}^{n-1}j!,$$
and therefore
$$\det(\partial_{x}^{i-1}h_{j}(x))_{i,j=1}^{n}=\left(\prod_{j=1}^{n-1}j!\right)\lim_{\varepsilon\to0}\frac{1}{\Delta({\bf x}^{\varepsilon}-{\bf x})}\det\left(h_{j}(x+(n-i)\varepsilon)\right)_{i,j=1}^{n},$$
as required.

\end{proof}

\begin{lemma}\label{copy1}
Let $\eta=e^{i\frac{2\pi}{n}}$ be the $n$-th root of unity. Therefore 
\begin{align*} 
\sum_{k\in\mathbb{Z}}e^{i2\pi xk}h(\theta,\nu+2\pi k)=\frac{1}{2\pi}\sum_{k\in\mathbb{Z}}\sech\left(|\log r|(x+k)\right)e^{-i(\nu-\theta)(x+k)}.
\end{align*}
\end{lemma}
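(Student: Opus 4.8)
The plan is to recognize the identity as an instance of the (twisted) Poisson summation formula applied to the hyperbolic secant, using the classical fact that $\operatorname{sech}$ is, up to rescaling, its own Fourier transform. Recall from Section~\ref{secannulus} that $h(\theta,\nu)=\frac{1}{2|\log r|}\operatorname{sech}\left(\frac{\pi}{2|\log r|}(\nu-\theta)\right)$. Setting $c=\frac{\pi}{2|\log r|}$ and $w=\nu-\theta$, the left-hand side of the lemma reads $\frac{c}{\pi}\sum_{k\in\mathbb{Z}}e^{i2\pi xk}\operatorname{sech}\left(c(w+2\pi k)\right)$, and the target right-hand side reads $\frac{1}{2\pi}\sum_{k\in\mathbb{Z}}\operatorname{sech}\left(|\log r|(x+k)\right)e^{-iw(x+k)}$.

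First I would apply Poisson summation $\sum_{k\in\mathbb{Z}}\phi(k)=\sum_{m\in\mathbb{Z}}\widehat\phi(m)$ to $\phi(t)=e^{i2\pi xt}\operatorname{sech}\left(c(w+2\pi t)\right)$, which is smooth and exponentially decaying, so there are no convergence subtleties. A change of variables $u=w+2\pi t$ in $\widehat\phi(m)=\int_{\mathbb{R}}\phi(t)e^{-i2\pi mt}\,dt$ brings it to the form $\frac{1}{2\pi}e^{-i(x-m)w}\int_{\mathbb{R}}\operatorname{sech}(cu)e^{i(x-m)u}\,du$. The essential input is then the standard integral $\int_{\mathbb{R}}\operatorname{sech}(au)e^{i\xi u}\,du=\frac{\pi}{a}\operatorname{sech}\left(\frac{\pi\xi}{2a}\right)$; applying it with $a=c$, $\xi=x-m$ and using $\frac{\pi}{2c}=|\log r|$ yields $\widehat\phi(m)=\frac{|\log r|}{\pi}e^{-i(x-m)w}\operatorname{sech}\left(|\log r|(x-m)\right)$.

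It then remains to assemble the pieces: multiplying by the prefactor $\frac{c}{\pi}$, using $\frac{c}{\pi}\cdot\frac{|\log r|}{\pi}=\frac{1}{2\pi}$, summing over $m\in\mathbb{Z}$ and reindexing by $k=-m$ (so that $x-m=x+k$) recovers the right-hand side exactly. There is no real obstacle here: the proof is a routine Poisson summation, and the only things requiring attention are the bookkeeping of the constants $c$, $|\log r|$ and the various factors of $2\pi$, together with quoting the Fourier transform of $\operatorname{sech}$ in a normalization matching our conventions. As an aside, the identity could equivalently be deduced from the Jacobi theta-function representations recorded in the remarks of Section~\ref{secannulus}, but the Poisson-summation route is the most self-contained.
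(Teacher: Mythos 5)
Your proposal is correct and follows essentially the same route as the paper: both proofs amount to Poisson summation combined with the standard Fourier transform of $\sech$ (the paper writes the left-hand side as a twisted sum of Fourier transforms $\hat f(k)$ of $f(\xi)=\frac{|\log r|}{\pi}\sech(|\log r|\xi)e^{-i(\nu-\theta)\xi}$ and then sums $f(x+k)$, which is the same identity you use, just read in the dual direction). Your constants and reindexing all check out, so the argument is sound.
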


\begin{proof}
If $\eta=e^{i\frac{2\pi}{n}}$ is the $n$-th root of unity, the left-hand side above can be expressed as
\begin{align}\label{poi}
\sum_{k\in\mathbb{Z}}e^{i2\pi xk}h(\theta,\nu+2\pi k)&=\frac{1}{2|\log r|}\sum_{k\in\mathbb{Z}}e^{i2\pi xk}\hat{f}(k),
\end{align}
where $\hat{f}(k)$ is the Fourier transform of the function $$f(\xi)=\frac{|\log r|}{\pi}\sech(|\log r|\xi)e^{-i(\nu-\theta)\xi}.$$ Therefore, applying the Poisson summation formula to the right hand side of (\ref{poi}), we obtain
\begin{align*}
\frac{1}{2|\log r|}\sum_{k\in\mathbb{Z}}e^{i2\pi xk}\hat{f}(k)&=\frac{1}{2|\log r|}\sum_{k\in\mathbb{Z}}f(x+k)\\
&=\frac{1}{2\pi}\sum_{k\in\mathbb{Z}}\sech(|\log r|\left(x+k\right))e^{-i(\nu-\theta)\left(x+k\right)}.
\end{align*}

\end{proof}

\end{document}